\definecolor{bgcolor}{rgb}{0.8,1,1}
\definecolor{bgcolor2}{rgb}{0.8,1,0.8}
\definecolor{niceblue}{rgb}{0.0,0.19,0.56}
\definecolor{PineGreen}{RGB}{0,110,51}
\definecolor{BrickRed}{RGB}{143,20,2}
\newcommand{\cmark}{{\color{PineGreen}\ding{51}}}%
\newcommand{\xmark}{{\color{BrickRed}\ding{55}}}%
\newcommand{\cD}{{\cal D}}
\newcommand{\Exp}{\mathbb{E}}
\newcommand{\Prob}[1]{\mathbb{P} \left[ #1\right]}
\providecommand{\mycomment}[3]{\todo[inline,caption={},color=#3!20]{\textbf{#1: }#2}}%
\providecommand{\inlinecomment}[3]{{\color{#1}#2: #3}}
\newcommand\commenter[2]%
\newcommand\csname i#1\endcsname[1]{\inlinecomment{#2}{#1}{##1}}
\newcommand\csname #1\endcsname[1]{\mycomment{#1}{##1}{#2}}
\definecolor{shadecolor}{gray}{0.9}
\declaretheoremstyle[
headfont=\normalfont\bfseries,
notefont=\mdseries, notebraces={(}{)},
bodyfont=\normalfont,
postheadspace=0.5em,
spaceabove=1pt,
mdframed={
  skipabove=8pt,
  skipbelow=8pt,
  hidealllines=true,
  backgroundcolor={shadecolor},
  innerleftmargin=4pt,
  innerrightmargin=4pt}
]{shaded}
\declaretheorem[style=shaded,within=section]{definition}
\declaretheorem[style=shaded,sibling=definition]{theorem}
\declaretheorem[style=shaded,sibling=definition]{proposition}
\declaretheorem[style=shaded,sibling=definition]{assumption}
\declaretheorem[style=shaded,sibling=definition]{corollary}
\declaretheorem[style=shaded,sibling=definition]{lemma}
\newcommand{\algname}[1]{{\sf  #1}\xspace}
\newcommand{\R}{\mathbb{R}}
\newcommand{\E}{\mathbb{E}}
\newcommand{\cO}{{\cal O}}
\newcommand{\hx}{\hat{x}}
\newcommand{\la}{\left\langle}
\newcommand{\g}{\gamma}
\newcommand{\ra}{\right\rangle}
\newcommand{\om}{\omega}
\newcommand{\eqdef}{:=}
\DeclarePairedDelimiter\ceil{\lceil}{\rceil}
\theoremstyle{plain}
\theoremstyle{remark}
\definecolor{bgcolor}{rgb}{0.8,1,1}
\definecolor{bgcolor2}{rgb}{0.8,1,0.8}
\definecolor{niceblue}{rgb}{0.0,0.19,0.56}
\definecolor{shadecolor}{gray}{0.9}
\declaretheoremstyle[
headfont=\normalfont\bfseries,
notefont=\mdseries, notebraces={(}{)},
bodyfont=\normalfont,
postheadspace=0.5em,
spaceabove=1pt,
mdframed={
  skipabove=8pt,
  skipbelow=8pt,
  hidealllines=true,
  backgroundcolor={shadecolor},
  innerleftmargin=4pt,
  innerrightmargin=4pt}
]{shaded}
\title{Single-Call Stochastic Extragradient Methods \\for Structured Non-monotone Variational Inequalities: 
\\Improved Analysis under Weaker Conditions}
\author{%
Sayantan Choudhury \\
AMS \& MINDS \\
Johns Hopkins University
\And
Eduard Gorbunov \\
MBZUAI
\And Nicolas Loizou \\
AMS \& MINDS \\
Johns Hopkins University
}
\begin{document}

\maketitle
\begin{abstract}
Single-call stochastic extragradient methods, like stochastic past extragradient (\algname{SPEG}) and stochastic optimistic gradient (\algname{SOG}),  have gained a lot of interest in recent years and are one of the most efficient algorithms for solving large-scale min-max optimization and variational inequalities problems (VIP) appearing in various machine learning tasks. However, despite their undoubted popularity, current convergence analyses of \algname{SPEG} and \algname{SOG} require strong assumptions like bounded variance or growth conditions. In addition, several important questions regarding the convergence properties of these methods are still open, including mini-batching, efficient step-size selection, and convergence guarantees under different sampling strategies. In this work, we address these questions and provide convergence guarantees for two large classes of structured non-monotone VIPs: (i) quasi-strongly monotone problems (a generalization of strongly monotone problems) and (ii) weak Minty variational inequalities (a generalization of monotone and Minty VIPs). We introduce the expected residual condition, explain its benefits, and show how it allows us to obtain a strictly weaker bound than previously used growth conditions, expected co-coercivity, or bounded variance assumptions. Finally, our convergence analysis holds under the arbitrary sampling paradigm, which includes importance sampling and various mini-batching strategies as special cases.
\end{abstract}
\vspace{-2mm}
\section{Introduction}\label{Introduction}
Differentiable game formulations where several parameterized models/players compete to minimize their respective objective functions have recently gained much attention from the machine learning community. Some landmark advances in machine learning that are framed as games (or in their simplified form as min-max optimization problems) are Generative Adversarial Networks (GANs) \citep{goodfellow2014generative, arjovsky2017wasserstein}, adversarial training of neural networks \citep{madry2018towards, wang2021adversarial}, reinforcement learning \citep{brown2020combining, sokota2022unified}, and distributionally robust learning \citep{namkoong2016stochastic,yu2022fast}.

In this work, we consider a more abstract formulation of the problem and focus on solving the following unconstrained stochastic variational inequality problem (VIP): 
\begin{equation}\label{eq: Variational Inequality Definition}
\text{Find } x^* \in \mathbb{R}^d: \text{such that } F(x^*) = \frac{1}{n} \sum_{i = 1}^n F_i(x^*) = 0 
\end{equation}
where each $F_i: \mathbb{R}^d \to \mathbb{R}^d $ is a Lipschitz continuous operator. Problem~\eqref{eq: Variational Inequality Definition} generalizes the solution of several types of \emph{stochastic smooth games}~\citep{facchinei2007games, loizou2021stochastic, gorbunov2022stochastic, beznosikov2022smooth}. The simplest example is the unconstrained min-max optimization problem (also called a \emph{zero-sum} game):
\begin{equation}
\label{eq: MinMax}
\min_{x_1 \in\R^{d_1}} \max_{x_2 \in\R^{d_2}}\frac{1}{n} \sum_{i=1}^n g_i(x_1,x_2) \, ,
\end{equation}
where each component function $g_i:\R^{d_1} \times \R^{d_2}\rightarrow \R$ is assumed to be smooth. In this scenario, operator $F_i$ of \eqref{eq: Variational Inequality Definition} represents the appropriate concatenation of the block-gradients of $g_i$: $F_i(x) := (\nabla_{x_1} g_i(x_1,x_2); -\nabla_{x_2} g_i(x_1,x_2) )$, where $x := (x_1; x_2)$. Solving~\eqref{eq: Variational Inequality Definition} then amounts to finding a stationary point $x^* = (x_1^*; x_2^*)$ for~\eqref{eq: MinMax}, which under a convex-concavity assumption for $g_i$, implies that it is a global solution for the min-max problem. 

However, in modern machine learning applications, game-theoretical formulations that are special cases of problem~\eqref{eq: Variational Inequality Definition} are rarely monotone. That is, the min-max optimization problem~\eqref{eq: MinMax} does not satisfy the popular and well-studied convex-concave setting. For this reason, the ML community started focusing on non-monotone problems with extra structural properties.\footnote{The computation of approximate first-order locally optimal solutions for general non-monotone problems (without extra structure) is intractable. See \cite{daskalakis2021complexity} and \cite{diakonikolas2021efficient} for more details.} In this work, we focus on such settings (structured non-monotone operators) for which we are able to provide tight convergence guarantees and avoid the standard issues (like cycling and divergence of the methods) appearing in the more general non-monotone regime. In particular, we focus on understanding and efficiently analyze the performance of single-call extragradient methods for solving (i) $\mu$-quasi-strongly monotone VIPs~\citep{loizou2021stochastic,beznosikov2022stochastic} and (ii) weak Minty variational inequalities~\citep{diakonikolas2021efficient, lee2021fast}.

\paragraph{Classes of structured non-monotone VIPs.} Throughout this work we assume that operator $F$ in \eqref{eq: Variational Inequality Definition} is $L$- Lipschitz i.e. $\forall x, y \in \R^d$ operator $F$ satisfy $\|F(x) - F(y)\| \leq L \|x - y\|$.

As we have already mentioned, in this work, we deal with two classes of structured non-monotone problems: the $\mu$-quasi strongly monotone VIPs and the weak Minty variational inequalities. 
\begin{definition}
$F$ is said to be $\mu$-quasi strongly monotone if there is $\mu >0$ such that:
\begin{equation}\label{eq: Strong Monotonicity}
\forall x \in \mathbb{R}^d \qquad    \la F(x), x - x^* \ra \geq \mu \|x - x^*\|^2.
\end{equation}
\end{definition}
Condition~\eqref{eq: Strong Monotonicity} is a relaxation of $\mu$-strong monotonicity, and it includes several non-monotone games as special cases~\cite{loizou2021stochastic}. Inequality \eqref{eq: Strong Monotonicity} can be seen as an extension of the popular quasi-strong convexity assumption from optimization literature \citep{necoara2019linear, gower2019sgd} to the VIPs~\cite{loizou2021stochastic}. 
In the literature of variational inequality problems, quasi strongly monotone problems are also known as strong coherent VIPs~\citep{song2020optimistic} or VIPs satisfying the strong stability condition~\citep{mertikopoulos2019learning}, or strong Minty variational inequality~\citep{diakonikolas2021efficient}. 

One of the weakest possible assumptions on the structure of non-monotone VIPs is the weak Minty variational inequality~\citep{diakonikolas2021efficient}. 

\begin{definition}
We say weak Minty Variational Inequality (MVI) holds for $F$ if for some $\rho > 0$ :
\begin{equation}\label{eq: weak MVI}
    \forall x\in \mathbb{R}^d \qquad \la F(x), x - x^*\ra \geq - \rho \|F(x)\|^2.
\end{equation}
\end{definition}
To the best of our knowledge, the weak Minty variational inequality~\eqref{eq: weak MVI} as an assumption was first introduced in~\cite{diakonikolas2021efficient}.  The more popular and extensively studied Minty variational inequality~\citep{dang2015convergence, liu2019towards, liu2021first, mertikopoulos2018optimistic} is a particular case of \eqref{eq: weak MVI} with $\rho = 0$. In addition, the weak MVI condition is implied by the negative
comonotonicity~\cite{bauschke2021generalized} or, equivalently, the positive cohypomonotonicity~\cite{combettes2004proximal}. 
Finally, when we focus on min-max optimization problems~\eqref{eq: MinMax}, weak MVI condition (with $\rho = 0$) is satisfied for several non-convex non-concave families of min-max objectives, including quasi-convex quasi-concave or star convex- star concave~\citep{gorbunov2022stochastic}. Extragradient-type methods for solving VIPs satisfying the weak MVI have been proposed in \cite{diakonikolas2021efficient, pethick2022escaping} and  \cite{bohm2022solving}.

\subsection{Main Contributions}
\begin{table*}[t]
    \centering
    \scriptsize
    \caption{\scriptsize 
    Summary of known and new convergence results for versions of \algname{SEG} and \algname{SPEG} with constant step-sizes applied to solve quasi-strongly monotone variational inequalities and variational inequalities with operators satisfying Weak Minty condition. Columns: ``Setup'' = quasi-strongly monotone or Weak MVI; ``No UBV?'' = is the result derived without bounded variance assumption?; ``Single-call'' = does the method require one oracle call per iteration?; ``Convergence rate'' = rate of convergence neglecting numerical factors. Notation: $K$ = number of iterations; $L_{\max} = \max_{i\in [n]} L_i$, where $L_i$ is a Lipschitz constant of $F_i$; $\overline{\mu} = \frac{1}{n}\sum_{i=1}^n\mu_i$, where $\mu_i$ is quasi-strong monotonicity constant of $F_i$ (see details in \citep{gorbunov2022stochastic}); $\sigma_{\text{US}*}^2 = \frac{1}{n}\sum_{i=1}^n \|F_i(x^*)\|^2$; $\overline{L} = \frac{1}{n}\sum_{i=1}^n L_i$; $\sigma_{\text{IS}*}^2 = \frac{1}{n}\sum_{i=1}^n \frac{\overline{L}}{L_i}\|F_i(x^*)\|^2$; $L$ = Lipschitz constant of $F$; $\mu$ = quasi-strong monotonicity constant of $F$; $\delta, \sigma_*^2$ = parameters from \eqref{eq: variance bound}; $\rho$ = parameter from Weak Minty condition; $\tau$ = batchsize.}
    \label{tab:comparison_of_rates}
    \begin{threeparttable}
    \resizebox{\columnwidth}{!}{%
        \begin{tabular}{|c|c|c c c|}
        \hline
        Setup & Method & No UBV? & Single-call? & Convergence rate 
        \\
        \hline\hline
        \multirow{7}{2cm}{\centering Quasi-strong mon.} & \begin{tabular}{c}
            \algname{S-SEG-US}\\
            \citep{gorbunov2022stochastic}
        \end{tabular} & \cmark\tnote{{\color{blue}(1)}} & \xmark & $\frac{L_{\max}}{\overline{\mu}}\exp\left(- \frac{\overline{\mu}}{L_{\max}}K\right) + \frac{\sigma_{\text{US}*}^2}{\overline{\mu}^2 K}$\\
        & \begin{tabular}{c}
            \algname{S-SEG-IS}\\
            \citep{gorbunov2022stochastic}
        \end{tabular} & \cmark\tnote{{\color{blue}(1)}} & \xmark & $\frac{\overline{L}}{\overline{\mu}}\exp\left(- \frac{\overline{\mu}}{\overline{L}}K\right) + \frac{\sigma_{\text{IS}*}^2}{\overline{\mu}^2 K}$\\
        & \begin{tabular}{c}
            \algname{SPEG}\\
            \citep{hsieh2019convergence}
        \end{tabular} & \xmark\tnote{{\color{blue}(2)}} & \cmark & $\frac{L}{\mu}\exp\left(- \frac{\mu}{L}K\right) + \frac{\sigma_*^2}{\mu^2 K}$\tnote{{\color{blue}(3)}}\\
        &\cellcolor{bgcolor2}\begin{tabular}{c}
            \algname{SPEG}\\
            (This work)
        \end{tabular} & \cellcolor{bgcolor2}\cmark & \cellcolor{bgcolor2}\cmark & \cellcolor{bgcolor2} $\max\left\{\frac{L}{\mu}, \frac{\delta}{\mu^2}\right\}\exp\left(- \min\left\{\frac{\mu}{L}, \frac{\mu^2}{\delta}\right\}K\right) + \frac{\sigma_*^2}{\mu^2 K}$ \\

        \hline\hline
        \multirow{6}{2cm}{\centering Weak MVI\tnote{{\color{blue}(4)}}} & \begin{tabular}{c}
            \algname{SEG+}\\
            \citep{diakonikolas2021efficient}
        \end{tabular} & \xmark\tnote{{\color{blue}(2)}} & \xmark & $\frac{L^2\|x_0 - x^*\|^2}{K(1 - 8\sqrt{2}L\rho)} + \frac{\sigma_*^2}{\tau (1 - 8\sqrt{2}L\rho)}$ \tnote{{\color{blue}(5)}}\\
        & \begin{tabular}{c}
            \algname{OGDA+}\\
            \citep{bohm2022solving}
        \end{tabular} & \xmark\tnote{{\color{blue}(2)}} & \cmark & $\frac{\|x_0 - x^*\|^2}{Kac(a - \rho)} + \frac{\sigma_*^2}{\tau L^2 ac(a - \rho)}$ \tnote{{\color{blue}(6)}}\\
        & \cellcolor{bgcolor2}\begin{tabular}{c}
            \algname{SPEG}\\
            (This work)
        \end{tabular} & \cellcolor{bgcolor2}\cmark & \cellcolor{bgcolor2}\cmark &\cellcolor{bgcolor2} $\frac{\left(1 + \frac{48\omega\gamma\delta}{\tau(1-L\gamma)^2}\right)^K\|x_0 - x^*\|^2}{K \omega\gamma(1-L(\gamma+4\omega))} + \frac{\left(1 + \frac{1-L\gamma}{K}\left(1 + \frac{48\omega\gamma\delta}{\tau(1-L\gamma)^2}\right)^K\right)\sigma_*^2}{\tau(1-L\gamma)(1-L(\gamma+4\omega))}$ \tnote{{\color{blue}(7)}} \; \\
        \hline
    \end{tabular}%
    }
    \begin{tablenotes}
        {\scriptsize\item [{\color{blue}(1)}] Quasi-strong monotonicity of all $F_i$ is assumed.
        \item [{\color{blue}(2)}] It is assumed that \eqref{eq: variance bound} holds with $\delta = 0$.
        \item [{\color{blue}(3)}] \cite{hsieh2019convergence} do not derive this result but it can be obtained from their proof using standard choice of step-sizes.
        \item [{\color{blue}(4)}] All mentioned results in this case require large batchsizes $\tau = \cO(K)$ to get $\cO(\nicefrac{1}{K})$ rate.
        \item [{\color{blue}(5)}] The result is derived for $\rho < \nicefrac{1}{8\sqrt{2}L}$.
        \item [{\color{blue}(6)}] The result is derived for $\rho < \nicefrac{3}{8L}$. Here $a$ and $c$ are assumed to satisfy $aL \leq \frac{7 - \sqrt{1 + 48c^2}}{8(1+c)}$, $c > 0$ and $a > \rho$.
        \item [{\color{blue}(7)}] The result is derived for $\rho < \nicefrac{1}{2L}$. Here we assume that $\max\{2\rho, \nicefrac{1}{(2L)}\} < \gamma < \nicefrac{1}{L}$ and $0 < \omega < \min\{\gamma - 2\rho, \nicefrac{(4-\gamma L)}{4L}\}$.
        }
        \vspace{-4mm}
    \end{tablenotes}
    \end{threeparttable}
\end{table*}
\vspace{-3mm}
Our main contributions are summarized below.
\vspace{-2mm}
\begin{itemize}[leftmargin=*]
\setlength{\itemsep}{0pt}
\item \textbf{Expected Residual.} We propose the expected residual \eqref{eq: ER Condition} condition for stochastic variational inequality problems \eqref{eq: Variational Inequality Definition}. We explain the
benefits of \ref{eq: ER Condition} and show how it can be used to derive an upper bound on $\E \|g(x)\|^2$ (see Lemma~\ref{Lemma: variance bound}) that it is strictly weaker than the bounded variance assumption and “growth conditions” previously used for the analysis of stochastic algorithms for solving \eqref{eq: Variational Inequality Definition}. We prove that \ref{eq: ER Condition} holds for a large class of operators, i.e.,  whenever  $F_i$ of \eqref{eq: Variational Inequality Definition} are Lipschitz continuous.

\item \textbf{Novel Convergence Guarantees.} We prove the first convergence guarantees for \algname{SPEG}\eqref{SPEG_UpdateRule} in the quasi-strongly monotone~\eqref{eq: Strong Monotonicity} and weak MVI~\eqref{eq: weak MVI} cases \emph{without using the bounded variance assumption}. We achieve that by using the proposed \eqref{eq: ER Condition} condition. In particular, for the class of quasi-strongly monotone VIPs, we show a linear
convergence rate to a neighborhood of $x^*$ when constant step-sizes are used. We also provide theoretically
motivated step-size switching rules that guarantee exact convergence of \algname{SPEG} to $x^*$. In the weak MVI case, we prove the convergence of \algname{SPEG} for $\rho < \nicefrac{1}{2L}$, improving the existing restrictions on $\rho$. We compare our results with the existing literature in Table~\ref{tab:comparison_of_rates}.

\item \textbf{Arbitrary Sampling.} Via a
stochastic reformulation of the variational inequality problem~\eqref{eq: Variational Inequality Definition} we explain how our convergence guarantees of \algname{SPEG} hold under the arbitrary sampling paradigm. This allows us to cover a wide range of samplings for \algname{SPEG} that were never considered in the literature before, including mini-batching, uniform sampling, and importance sampling as special cases. In this sense, our analysis of \algname{SPEG} is unified for different sampling strategies. Finally, to highlight the tightness of our analysis, we show that the best-known convergence guarantees of deterministic \algname{PEG} for strongly monotone and weak MVI can be obtained as special cases of our main theorems.
\end{itemize}

\section{Stochastic Reformulation of VIPs \& Single-Call Extragradient Methods}

In this work, we provide a theoretical analysis of single-call stochastic extragradient methods that allows us to obtain convergence guarantees of any minibatch and reasonable sampling selection.  We achieve that by using the recently proposed ``stochastic reformulation”
of the variational inequality problem \eqref{eq: Variational Inequality Definition} from \cite{loizou2021stochastic}. That is, to allow for any form of minibatching, we use the \emph{arbitrary sampling} notation 

\begin{equation}
\label{gEstimator}
g(x) = F_v(x) \eqdef \frac{1}{n} \sum _{i=1}^n v_i F_i(x),
\end{equation}
where $v\in\R^n_+$ is a random \emph{sampling vector} drawn from a user-defined distribution $\cD$ such that $\Exp_{\cD}[v_i]  = 1, \,\mbox{for }i=1,\ldots, n$. In this setting, the original problem \eqref{eq: Variational Inequality Definition} can be equivalently written as,
\begin{equation}
\label{Reformulation}
\small{\text{Find } x^* \in \R^d: \E_\cD \left[F_v(x^*)\eqdef\frac{1}{n}\sum_{i=1}^n v_i F_i(x^*) \right]=0,}
\end{equation}
where the equivalence trivially holds since $\Exp_{\cD}[F_v(x)] =\frac{1}{n} \sum _{i=1}^n \Exp_{\cD}[v_i] F_i(x) = F(x).$

In this work, we consider \emph{Stochastic Past Extragradient Method} (\algname{SPEG}) applied to~\eqref{Reformulation}:
\begin{equation}\label{SPEG_UpdateRule}
    \begin{split}
    \hat{x}_k & = x_k - \gamma_k F_{v_{k - 1}}(\hat{x}_{k - 1}) \\ 
    x_{k + 1} & = x_k - \om_k F_{v_k}(\hat{x}_k) 
    \end{split}
    \end{equation} 
where $\hat x_{-1} = x_{0}$ and $v^k \sim \cD$ is sampled i.i.d at each iteration and $\gamma_k >0$ and $\om_k >0$ are the extrapolation step-size and update step-size respectively. We note that in our convergence analysis, we allow selecting \emph{any} distribution $\cD$ that satisfies $\Exp_{\cD}[v_i]  = 1$ $\forall i$. This means that for a different selection of $\cD$, \eqref{SPEG_UpdateRule} yields different interpretations of \algname{SPEG} for solving the original problem \eqref{eq: Variational Inequality Definition}.

One example of distribution $\cD$ is $\tau$--minibatch sampling, which is defined as follows.
\begin{definition}[$\tau$-Minibatch sampling]\label{def:minibatch}
Let $\tau \in [n]$. We say that $v \in \R^n$ is a $\tau$--minibatch sampling if
for every subset $S \in [n]$ with $|S| =\tau$, we have that $\Prob{v=\frac{n}{\tau}\sum_{i \in S} e_i} \eqdef  \frac{1}{\binom{n}{\tau}} = \frac{\tau!(n-\tau)!}{n!}.$
\end{definition}

By using a double counting argument, one can show that if $v$ is a $\tau$--minibatch sampling, it is also a valid sampling vector ($\Exp_{\cD}[v_i]  = 1$)~\citep{gower2019sgd}. We highlight that our analysis holds for every form of minibatching and for several choices of sampling vectors $v$. Later in Section~\ref{section: Arbitrary Sampling}, we provide more details related to non-uniform sampling. In addition, by Definition~\ref{def:minibatch}, it is clear that if $\tau=n$, then $v_i=1$ for all $i \in [n]$. Later in Section~\ref{Theory}, we prove how our analysis captures the deterministic Past Extragradient Method as a special case. 

In \cite{loizou2021stochastic}, an analysis of stochastic gradient descent-ascent ($x_{k+1} = x_k - \om_k F_{v_k}(x_k)$) under the arbitrary sampling paradigm was proposed for solving star-co-coercive VIPs. Later \cite{gorbunov2022stochastic}, extended this approach and provided general convergence guarantees for stochastic extragradient method (\algname{SEG}) (a stochastic variant of the popular extragradient method \citep{korpelevich1976extragradient, juditsky2011solving}) for solving quasi-strongly monotone and monotone VIPs. Despite its popularity, \algname{SEG} requires two oracle calls per iteration which makes it prohibitively expensive in many large-scale applications and not easily applicable to the online learning problems \citep{golowich2020tight}. This motivates us to explore in detail the convergence guarantees of single-call variants of extragradient methods (extragradient methods that require only a single oracle call per iteration). 

\paragraph{On Single-Call Extragradient Methods.} The seminal work of \cite{popov1980modification} is the first paper that proposes the deterministic Past Extagradient method. In the stochastic setting, \cite{hsieh2019convergence} provides an analysis of several stochastic single-call extragradient methods for solving strongly monotone VIPs. In \cite{hsieh2019convergence}, it was also shown that in the unconstrained setting, the update rules of Past Extragradient and Optimistic Gradient are exactly equivalent (see also Proposition \ref{proposition: equivalence of SPEG and SOG} in appendix). Through this connection, and via our stochastic reformulation~\eqref{Reformulation} our theoretical results hold also for the \emph{Stochastic Optimistic Gradient Method} (\algname{SOG}): $ x_{k+1} = x_k - \om_k F_{v_k}(x_k) - \gamma_k (F_{v_k}(x_k) - F_{v_{k-1}}(x_{k-1}))$.

\cite{bohm2022solving} provides the convergence guarantees of \algname{SOG} for weak MVI. To the best of our knowledge, our work is the first that provides convergence guarantees for \algname{SOG} under the arbitrary sampling paradigm (captures sampling beyond uniform sampling) and also without using the bounded variance assumption. 

\section{Expected Residual}

In our theoretical results, we rely on Expected Residual (ER) condition. In this section, we define ER and explain how it is connected with similar conditions used in optimization literature. We further provide sufficient conditions for ER to hold and prove how it can be used to obtain a strictly weaker upper bound of $\E \|g(x)\|^2$ than previously used growth conditions, expected co-coercivity, or bounded variance assumptions.
\begin{assumption}\label{as:expected_residual}
We say the Expected Residual (ER) condition holds if there is a parameter $\delta>0$ such that for an unbiased estimator $g(x)$ of the operator $F$, we have
\begin{align*}
\label{eq: ER Condition}
    \E \left[ \| (g(x) - g(x^*)) - (F(x) - F(x^*))\|^2 \right] \leq \frac{\delta}{2} \|x - x^*\|^2. \tag{ER}
\end{align*}
\end{assumption}

The \ref{eq: ER Condition} condition bounds how far the stochastic estimator $g(x)= F_v(x)$ \eqref{gEstimator} used in \algname{SPEG} is from the true operator $F(x)$. \ref{eq: ER Condition} depends on both the properties of the operator $F(x)$ and of the selection of sampling (via $g(x)$). 
Conditions similar to \ref{eq: ER Condition} appeared before in optimization literature but they have never been used in operator theory and the analysis of \algname{SPEG}. In particular, \cite{gower2021sgd} used a similar condition for analyzing \algname{SGD} in stochastic optimization problems but with the right-hand side of \ref{eq: ER Condition} to be the function suboptimality $f(x)-f(x^*)$ (such concept is not available in VIPs). In \cite{szlendak2021permutation} and \cite{gorbunov2022variance}, similar conditions appear under the name ``Hessian variance'' assumption for distributed minimization problems. In the context of distributed VIPs, a similar but stronger condition to \ref{eq: ER Condition} is used by \cite{beznosikov2023compression}. 

\textbf{Bound on Operator Noise.}
A common approach for proving the convergence of stochastic algorithms for solving the VIPs is assuming uniform boundedness of the stochastic operator or uniform boundedness of the variance. However, as we explain below, these assumptions either do not hold or are true only for a restrictive set of problems. In our work, we do not assume such bounds. Instead,
we use the following direct consequence of \ref{eq: ER Condition}.

\begin{lemma}\label{Lemma: variance bound}
Let $\sigma_*^2 \eqdef \E\|g(x^*)\|^2 <\infty$ (operator noise at the optimum is finite). If \ref{eq: ER Condition} holds, then
\begin{equation}\label{eq: variance bound}
    \begin{split}
        \E \|g(x)\|^2 \leq \delta \|x - x^*\|^2 + \|F(x)\|^2 + 2\sigma_*^2.
    \end{split}
\end{equation}
\end{lemma}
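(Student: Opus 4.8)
The plan is to derive \eqref{eq: variance bound} from \eqref{eq: ER Condition} by a standard ``add and subtract'' decomposition followed by Young's inequality. The key observation is that $g$ is unbiased, so $\E[g(x^*)] = F(x^*)$, and we can write
\[
g(x) = \underbrace{\big(g(x) - g(x^*)\big) - \big(F(x) - F(x^*)\big)}_{=:A} + \underbrace{\big(F(x) - F(x^*)\big)}_{=:B} + g(x^*).
\]
First I would note $F(x^*) = 0$ by \eqref{eq: Variational Inequality Definition}, so $B = F(x)$, and the decomposition becomes $g(x) = A + F(x) + g(x^*)$. Then I would take squared norms and expectations, and control the cross terms.

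The main step is bounding $\E\|A + F(x) + g(x^*)\|^2$. I would apply the inequality $\|u + w\|^2 \le 2\|u\|^2 + 2\|w\|^2$ with $u = A$ and $w = F(x) + g(x^*)$, or alternatively handle all three terms at once. A clean route: since $F(x)$ is deterministic, $\E\|A + F(x) + g(x^*)\|^2 = \|F(x)\|^2 + 2\la F(x), \E[A] + \E[g(x^*)]\ra + \E\|A + g(x^*)\|^2$. Unbiasedness gives $\E[A] = 0$ and $\E[g(x^*)] = F(x^*) = 0$, so the cross term vanishes, leaving $\E\|g(x)\|^2 = \|F(x)\|^2 + \E\|A + g(x^*)\|^2$. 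Then $\E\|A + g(x^*)\|^2 \le 2\E\|A\|^2 + 2\E\|g(x^*)\|^2 \le 2 \cdot \frac{\delta}{2}\|x - x^*\|^2 + 2\sigma_*^2 = \delta\|x-x^*\|^2 + 2\sigma_*^2$, where the first bound on $\E\|A\|^2$ is exactly \eqref{eq: ER Condition} and the second is the definition of $\sigma_*^2$. Combining yields $\E\|g(x)\|^2 \le \delta\|x-x^*\|^2 + \|F(x)\|^2 + 2\sigma_*^2$, which is the claim.

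I do not anticipate a genuine obstacle here; the only thing requiring care is making sure the cross terms vanish, which hinges on (i) unbiasedness of $g$ applied at both $x$ and $x^*$, and (ii) $F(x^*) = 0$. An equally valid but slightly looser approach would avoid exploiting the exact cancellation and instead just use $\|a+b+c\|^2 \le 3(\|a\|^2+\|b\|^2+\|c\|^2)$; however that would produce a factor $3$ in front of $\|F(x)\|^2$ and $3\sigma_*^2$, so the tighter constants in \eqref{eq: variance bound} really do come from the orthogonality of the mean-zero part against the deterministic part. Hence I would present the argument via the cross-term computation rather than a crude triangle-type bound.
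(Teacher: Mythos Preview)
Your proof is correct and essentially identical to the paper's. The paper first invokes the variance identity $\E\|g(x)-F(x)\|^2 = \E\|g(x)\|^2 - \|F(x)\|^2$ (exactly your cross-term cancellation, since $\E[A]=\E[g(x^*)]=0$), then writes $g(x)-F(x) = A + g(x^*)$ and applies Young's inequality together with \eqref{eq: ER Condition}; this is precisely the decomposition you describe, just with the two steps in slightly different order.
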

\paragraph{Sufficient Conditions for \ref{eq: ER Condition}.}
Let us now provide sufficient conditions which guarantee that the \ref{eq: ER Condition} condition holds and give a closed-form expression for the expected residual parameter $\delta$ and $\sigma_*^2 = \E\|g(x^*)\|^2$ for the case of $\tau$-minibatch sampling (Def.~\ref{def:minibatch}).
\begin{proposition}\label{Prop_SufficientCondition}
Let $F_i$ of problem~\eqref{eq: Variational Inequality Definition} be $L_i$-Lipschitz operators, then \ref{eq: ER Condition} holds.  If, in addition, 
vector $v \in \R^n$ is a $\tau$--minibatch sampling (Def.~\ref{def:minibatch}) then: $\delta = \frac{2}{n \tau} \frac{n- \tau}{n -1} \sum_{i = 1}^n L_i^2, \text{and } \sigma_*^2 = \frac{1}{n \tau} \frac{n- \tau}{n -1} \sum_{i = 1}^n \|F_i(x^*)\|^2.$
\end{proposition}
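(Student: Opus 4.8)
The plan is to compute the left-hand side of \ref{eq: ER Condition} directly using the structure $g(x) = F_v(x) = \frac{1}{n}\sum_{i=1}^n v_i F_i(x)$. First I would introduce the shorthand $\Delta_i(x) \eqdef F_i(x) - F_i(x^*)$, so that $g(x) - g(x^*) = \frac{1}{n}\sum_{i=1}^n v_i \Delta_i(x)$ and, taking expectations with $\E_\cD[v_i]=1$, $F(x) - F(x^*) = \frac{1}{n}\sum_{i=1}^n \Delta_i(x)$. Hence the quantity inside the norm in \ref{eq: ER Condition} is $\frac{1}{n}\sum_{i=1}^n (v_i - 1)\Delta_i(x)$, and I want to bound $\E\bigl\|\frac{1}{n}\sum_{i=1}^n (v_i-1)\Delta_i(x)\bigr\|^2$. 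Expanding the square gives $\frac{1}{n^2}\sum_{i,j} \E[(v_i-1)(v_j-1)]\langle \Delta_i(x), \Delta_j(x)\rangle$, i.e. the expression is controlled by the covariance matrix of the sampling vector $v$. For the general Lipschitz claim (no minibatch structure assumed), I would instead use a cruder route: bound $\E\|\frac{1}{n}\sum_i(v_i-1)\Delta_i(x)\|^2 \le C \cdot \frac{1}{n}\sum_i \|\Delta_i(x)\|^2$ for some constant $C$ depending only on the second moments of $\cD$ (finite since $\cD$ is user-defined and fixed), and then apply $L_i$-Lipschitzness, $\|\Delta_i(x)\| \le L_i\|x-x^*\|$, to get $\le C \cdot \frac{1}{n}\sum_i L_i^2 \cdot \|x-x^*\|^2$, which is exactly \ref{eq: ER Condition} with $\frac{\delta}{2} = \frac{C}{n}\sum_i L_i^2$. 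This establishes that \ref{eq: ER Condition} holds whenever the $F_i$ are Lipschitz.

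For the sharp constants in the $\tau$-minibatch case, the key computation is the covariance structure of the $\tau$-minibatch sampling vector of Definition~\ref{def:minibatch}. Writing $v = \frac{n}{\tau}\sum_{i\in S} e_i$ with $S$ uniform over $\tau$-subsets, one has $v_i = \frac{n}{\tau}\mathbf{1}[i\in S]$, so $\E[v_i] = \frac{n}{\tau}\cdot\frac{\tau}{n} = 1$ and $\E[v_i v_j] = \frac{n^2}{\tau^2}\Prob{i,j\in S}$. For $i=j$ this is $\frac{n^2}{\tau^2}\cdot\frac{\tau}{n} = \frac{n}{\tau}$, and for $i\ne j$ it is $\frac{n^2}{\tau^2}\cdot\frac{\tau(\tau-1)}{n(n-1)} = \frac{n(\tau-1)}{\tau(n-1)}$. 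Therefore $\E[(v_i-1)^2] = \frac{n}{\tau}-1 = \frac{n-\tau}{\tau}$ and $\E[(v_i-1)(v_j-1)] = \frac{n(\tau-1)}{\tau(n-1)} - 1 = -\frac{n-\tau}{\tau(n-1)}$ for $i\ne j$. Plugging these into the expanded square, the diagonal contributes $\frac{1}{n^2}\cdot\frac{n-\tau}{\tau}\sum_i\|\Delta_i(x)\|^2$ and the off-diagonal contributes $-\frac{1}{n^2}\cdot\frac{n-\tau}{\tau(n-1)}\sum_{i\ne j}\langle\Delta_i,\Delta_j\rangle$. Using $\sum_{i\ne j}\langle\Delta_i,\Delta_j\rangle = \|\sum_i\Delta_i\|^2 - \sum_i\|\Delta_i\|^2 \ge -\sum_i\|\Delta_i\|^2$, the off-diagonal term is bounded above by $\frac{1}{n^2}\cdot\frac{n-\tau}{\tau(n-1)}\sum_i\|\Delta_i\|^2$, so altogether $\E\|g(x)-g(x^*)-(F(x)-F(x^*))\|^2 \le \frac{1}{n^2}\cdot\frac{n-\tau}{\tau}\bigl(1 + \frac{1}{n-1}\bigr)\sum_i\|\Delta_i\|^2 = \frac{1}{n\tau}\cdot\frac{n-\tau}{n-1}\sum_i\|\Delta_i\|^2$. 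Applying $\|\Delta_i(x)\|^2 \le L_i^2\|x-x^*\|^2$ yields \ref{eq: ER Condition} with $\frac{\delta}{2} = \frac{1}{n\tau}\cdot\frac{n-\tau}{n-1}\sum_i L_i^2$, i.e. $\delta = \frac{2}{n\tau}\cdot\frac{n-\tau}{n-1}\sum_i L_i^2$, as claimed. The formula for $\sigma_*^2 = \E\|g(x^*)\|^2$ follows from essentially the same second-moment computation applied to $g(x^*) = \frac{1}{n}\sum_i v_i F_i(x^*)$: expanding, $\E\|g(x^*)\|^2 = \frac{1}{n^2}\sum_{i,j}\E[v_iv_j]\langle F_i(x^*),F_j(x^*)\rangle$, and using $F(x^*)=0$ so that $\sum_i F_i(x^*) = 0$, the $i\ne j$ sum $\sum_{i\ne j}\langle F_i(x^*),F_j(x^*)\rangle = -\sum_i\|F_i(x^*)\|^2$, one collects exactly $\frac{1}{n\tau}\cdot\frac{n-\tau}{n-1}\sum_i\|F_i(x^*)\|^2$ after substituting the values of $\E[v_iv_j]$.

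The main obstacle — really the only non-routine point — is handling the off-diagonal (cross) terms correctly: one must exploit the fact that minibatch sampling induces \emph{negative} correlations between distinct coordinates of $v$, and combine this with the identity $\sum_{i\ne j}\langle\Delta_i,\Delta_j\rangle \ge -\sum_i\|\Delta_i\|^2$ to absorb them cleanly rather than bounding term by term (which would give a worse constant and miss the exact $\frac{n-\tau}{n-1}$ factor). For the $\sigma_*^2$ computation the analogous simplification is even cleaner because $\sum_i F_i(x^*)=0$ makes the cross-term equality (not just an inequality) available. Everything else is bookkeeping: linearity of expectation, the definition $\E_\cD[v_i]=1$, and the per-component Lipschitz bounds.
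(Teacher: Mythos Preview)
Your proof is correct and arrives at the same constants as the paper, but the route for the $\tau$-minibatch $\delta$ differs. The paper packages the computation as a trace: writing $\mathbf{R} = (\Delta_1,\ldots,\Delta_n)$, the left side of \ref{eq: ER Condition} is $\frac{1}{n^2}\E\|\mathbf{R}(v-\mathbf{1})\|^2 = \frac{1}{n^2}\mathrm{tr}(\mathbf{R}^\top\mathbf{R}\,\mathrm{Var}[v])$, bounded via $\mathrm{tr}(AB)\le \lambda_{\max}(B)\,\mathrm{tr}(A)$, and then invokes an external lemma (from \cite{sebbouh2019towards}) giving $\lambda_{\max}(\mathrm{Var}[v]) = \frac{n(n-\tau)}{\tau(n-1)}$. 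You instead compute the two distinct covariance entries of $v$ by hand and absorb the cross terms using $\|\sum_i\Delta_i\|^2\ge 0$. The two arguments are equivalent in substance --- your inequality on the cross sum is exactly what the eigenvalue bound encodes, since the top eigenvector of $\mathrm{Var}[v]$ lives on the hyperplane orthogonal to $\mathbf{1}$ --- but your version is more self-contained and avoids the external citation, at the cost of being tied to the explicit $\tau$-minibatch structure rather than stating a bound valid for arbitrary $\mathrm{Var}[v]$. For $\sigma_*^2$, both you and the paper expand $\frac{1}{n^2}\sum_{i,j}\E[v_iv_j]\langle F_i(x^*),F_j(x^*)\rangle$ and use $\sum_i F_i(x^*)=0$ to collapse the off-diagonal; the paper leaves this last simplification implicit, while you spell it out.
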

Similar results to Prop.~\ref{Prop_SufficientCondition} but under different sufficient conditions have been obtained for $\tau$--minibatch sampling under expected smoothness and a variant of expected residual for solving minimization problems in \cite{gower2019sgd} and \cite{gower2021sgd} respectively. In \cite{loizou2021stochastic}, a similar proposition was derived but for the much more restrictive class of co-coercive operators.

\paragraph{Connection to Other Assumptions.}  In the proofs of our convergence results, we use the bound \eqref{eq: variance bound}, which, as we explained above, is a direct consequence of \ref{eq: ER Condition}. In this paragraph, we place this bound in a hierarchy of common assumptions used for the analysis of stochastic algorithms for solving VIPs. 
In the literature on stochastic algorithms for solving the VIPs and min-max optimization problems, previous works assume either bounded operator ($\E\|g(x)\|^2 \leq c$) \citep{abernethy2021last, nemirovski2009robust}, bounded variance ($\E \|g(x) - F(x)\|^2 \leq c$) \citep{lin2020gradient, tran2020hybrid, juditsky2011solving} (in Appendix \ref{sec:BoundedVarianceCounterExample} we provide a simple example where bounded variance assumption does not hold) or growth condition ($\E \|g(x)\|^2 \leq c_1 \|F(x)\|^2 + c_2$) \citep{lin2020finite}.  In all of these conditions, the parameters $c$, $c_1$, and $c_2$ are usually constants that do not have a closed-form expression. The closer works to our results are \cite{loizou2021stochastic,beznosikov2022stochastic} which assumes existence of $l_{F}>0$ such that the expected co-coercivity condition ($\E \|g(x) - g(x^*)\|^2 \leq l_{F} \la F(x), x- x^*\ra$) holds. Their convergence guarantees provide an efficient analysis for several variants of \algname{SGDA} for solving co-coercive VIPs. In the proposition below, we prove how these conditions are related to the bound \eqref{eq: variance bound} obtained using \ref{eq: ER Condition}.

\begin{proposition}\label{Proposition connecting assumptions} Suppose $F$ is a $L$-Lipschitz operator. Then we have the following hierarchy of assumptions:
\begin{center}
\begin{tikzcd}[column sep=.4em, row sep = .4em]
\boxed{\text{Bounded Operator}} \arrow[r] & \boxed{\text{Bounded Variance}} \arrow[r] & \boxed{\text{Growth Condition}} \arrow[r] & \boxed{\eqref{eq: variance bound}}\\
& \boxed{\text{$F_i$ are $L_i$-Lipschitz}} \arrow[r] & \boxed{\eqref{eq: ER Condition}}  \arrow[ur] & \\
& \boxed{\text{Expected Cocoercivity}}\arrow[ur] & & &
\end{tikzcd}
\end{center}
\end{proposition}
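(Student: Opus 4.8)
The plan is to establish each arrow in the diagram separately, since the hierarchy is really a collection of implications. I would organize the proof into two groups: the top row of implications (Bounded Operator $\Rightarrow$ Bounded Variance $\Rightarrow$ Growth Condition $\Rightarrow$ \eqref{eq: variance bound}), and the diagonal arrows feeding into \eqref{eq: ER Condition} and then into \eqref{eq: variance bound}.

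For the top row, each step is a short computation. Bounded Operator $\Rightarrow$ Bounded Variance: if $\E\|g(x)\|^2 \le c$, then since $F(x) = \E[g(x)]$, Jensen gives $\|F(x)\|^2 \le c$, and $\E\|g(x) - F(x)\|^2 = \E\|g(x)\|^2 - \|F(x)\|^2 \le c$. Bounded Variance $\Rightarrow$ Growth Condition: $\E\|g(x)\|^2 = \E\|g(x) - F(x)\|^2 + \|F(x)\|^2 \le \|F(x)\|^2 + c$, which is the growth condition with $c_1 = 1$, $c_2 = c$. Growth Condition $\Rightarrow$ \eqref{eq: variance bound}: from $\E\|g(x)\|^2 \le c_1\|F(x)\|^2 + c_2$ one wants $\E\|g(x)\|^2 \le \delta\|x-x^*\|^2 + \|F(x)\|^2 + 2\sigma_*^2$. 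When $c_1 \le 1$ this is immediate with $\delta = 0$ (using $c_2 \le 2\sigma_*^2$ may fail, so more care is needed); in general, write $(c_1 - 1)\|F(x)\|^2 = (c_1-1)\|F(x) - F(x^*)\|^2 \le (c_1-1)L^2\|x-x^*\|^2$ using $L$-Lipschitzness and $F(x^*) = 0$, so $\delta = \max\{0,(c_1-1)\}L^2$ works together with absorbing $c_2$ into a (possibly enlarged) constant; I would state the implication as holding up to redefinition of constants, which is the convention the paper uses ("parameters $c, c_1, c_2$ are usually constants"). The subtle point, and the one I'd be most careful about, is that \eqref{eq: variance bound} pins down the coefficient of $\|F(x)\|^2$ to exactly $1$ and the additive constant to exactly $2\sigma_*^2$, so "Growth Condition $\Rightarrow$ \eqref{eq: variance bound}" should be read as: the growth-condition family is contained in the \eqref{eq: variance bound} family once we allow the Lipschitz constant of $F$ to enter $\delta$; I would make this explicit rather than claim a literal inequality with the same constants.

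For the diagonal arrows: "$F_i$ are $L_i$-Lipschitz $\Rightarrow$ \eqref{eq: ER Condition}" is exactly the first assertion of Proposition~\ref{Prop_SufficientCondition}, so I would simply cite it. "\eqref{eq: ER Condition} $\Rightarrow$ \eqref{eq: variance bound}" is exactly Lemma~\ref{Lemma: variance bound}, so I cite that. For "Expected Cocoercivity $\Rightarrow$ \eqref{eq: variance bound}": from $\E\|g(x) - g(x^*)\|^2 \le l_F\langle F(x), x-x^*\rangle$, I would bound $\langle F(x),x-x^*\rangle = \langle F(x) - F(x^*), x-x^*\rangle \le \|F(x)\|\,\|x-x^*\| \le \frac{1}{2L}\|F(x)\|^2 + \frac{L}{2}\|x-x^*\|^2$ (Cauchy–Schwarz then Young, or alternatively use $\|F(x)\| \le L\|x-x^*\|$ directly to get $\langle F(x),x-x^*\rangle \le L\|x-x^*\|^2$), hence $\E\|g(x)-g(x^*)\|^2 \le \frac{l_F L}{2}\|x-x^*\|^2 + \ldots$; then $\E\|g(x)\|^2 \le 2\E\|g(x)-g(x^*)\|^2 + 2\E\|g(x^*)\|^2 = 2\E\|g(x)-g(x^*)\|^2 + 2\sigma_*^2$ gives a bound of the form $\delta\|x-x^*\|^2 + c\|F(x)\|^2 + 2\sigma_*^2$ with $c \le 1$ achievable by choosing the Young split appropriately (or absorbing the $\|F(x)\|^2$ term into $\delta\|x-x^*\|^2$ via Lipschitzness, yielding $c=0$). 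This again lands in the \eqref{eq: variance bound} family.

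The main obstacle is purely expository rather than mathematical: making precise what "$\Rightarrow$" means along the top row and along the Expected-Cocoercivity arrow, since \eqref{eq: variance bound} has rigid constants while the source conditions carry free constants. I expect the cleanest route is to prove, for each source condition, an inequality of the shape $\E\|g(x)\|^2 \le \delta\|x-x^*\|^2 + \|F(x)\|^2 + 2\sigma_*^2$ with an explicit $\delta$ depending on $L$ (and the source parameters), using $F(x^*)=0$ and $L$-Lipschitzness to convert any surplus $\|F(x)\|^2$ or additive slack into the $\|x-x^*\|^2$ term — with the single genuinely delicate case being that the additive constant $2\sigma_*^2$ cannot in general absorb an arbitrary $c_2$, so the implication is stated at the level of the bound \eqref{eq: variance bound} being \emph{available} (with some valid $\delta$) rather than with a prescribed $\delta$. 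All the remaining arrows are one- or two-line computations or direct citations of Lemma~\ref{Lemma: variance bound} and Proposition~\ref{Prop_SufficientCondition}.
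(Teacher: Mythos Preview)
Your top-row implications and the two citations (Proposition~\ref{Prop_SufficientCondition} for $F_i$ Lipschitz $\Rightarrow$ \eqref{eq: ER Condition}, Lemma~\ref{Lemma: variance bound} for \eqref{eq: ER Condition} $\Rightarrow$ \eqref{eq: variance bound}) match the paper's proof. Two points deserve correction.

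First, you misread the Expected Cocoercivity arrow. In the diagram it points to \eqref{eq: ER Condition}, not to \eqref{eq: variance bound}; you prove only the weaker composite implication. The paper establishes Expected Cocoercivity $\Rightarrow$ \eqref{eq: ER Condition} directly: by the variance decomposition (the same identity you invoke for $1\Rightarrow 2$),
\[
\E\|(g(x)-g(x^*))-(F(x)-F(x^*))\|^2 = \E\|g(x)-g(x^*)\|^2 - \|F(x)-F(x^*)\|^2 \le \E\|g(x)-g(x^*)\|^2,
\]
and then Expected Cocoercivity plus the same Cauchy--Schwarz/Young/Lipschitz bound you use gives $l_F\langle F(x),x-x^*\rangle \le l_F L\|x-x^*\|^2$. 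So the ingredients in your argument are right, but you should feed them into the ER left-hand side rather than into $\E\|g(x)\|^2$.

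Second, you are overcomplicating Growth Condition $\Rightarrow$ \eqref{eq: variance bound}. The paper treats $\delta$ and $\sigma_*^2$ in \eqref{eq: variance bound} as free parameters of the bound, not as the specific constants from Lemma~\ref{Lemma: variance bound}; it simply uses $\|F(x)\|^2 \le L^2\|x-x^*\|^2$ (from $F(x^*)=0$ and $L$-Lipschitzness) to get $\E\|g(x)\|^2 \le \alpha L^2\|x-x^*\|^2 + \beta$ and then sets $\delta=\alpha L^2$, $\sigma_*^2 = \beta/2$. Your careful case split on $c_1\lessgtr 1$ and the worry about absorbing $c_2$ into a fixed $2\sigma_*^2$ are unnecessary under this reading.
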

\cqmo{I would still like to see some evidence in real world that the supposed condition can hold while other assumptions like variance bounded condition does not. A simple convex problem and a simple real data shall be good enough for me.}
Let us also mention that \cite{hsieh2020explore} provided convergence guarantee of double-oracle stochastic extragradient (\algname{SEG}) method under the variance control condition $\E \|g(x) - F(x)\|^2 \leq (a \|x - x^*\| + b)^2$ where $a, b \geq 0$. In their work, they focus on solving VIPs satisfying the error-bound condition, and they did not provide closed-form expressions of parameters $a$ and $b$. Although the analysis of \cite{hsieh2020explore} can be conducted with $a > 0$, the authors only provide rates for the case $a = 0$. The main difference between their results (for \algname{SEG}) and our results (for \algname{SPEG}) is that our bound \eqref{eq: variance bound} is not really an assumption, but it holds for free when $F_i$ are $L_i$-Lipschitz. In addition, the values of parameters $\delta$ and $\sigma_*^2$ in \eqref{eq: variance bound} could have different values based on the sampling used in the update rule of \algname{SPEG}.
\section{Convergence Analysis}\label{Theory}
In this section, we present and discuss the main convergence results of this work. In the first part, we focus on the ones derived for $\mu$-quasi strongly monotone problems \eqref{eq: Strong Monotonicity} (both for constant and decreasing step-sizes), and in the second part on the Weak Minty VIP \eqref{eq: weak MVI}. 
\subsection{Quasi-Strongly Monotone Problems}
\paragraph{Constant Step-size:}We start with the case of $\mu$-quasi strongly monotone problems and consider the convergence of \algname{SPEG} with constant step-size.

\begin{theorem}\label{Theorem: constant stepsize theorem}
Let $F$ be $L$-Lipschitz, $\mu$-quasi strongly monotone, and let \ref{eq: ER Condition} hold.  Choose  step-sizes $\gamma_k = \omega_k = \omega$ such that 
\begin{equation}\label{eq:constant_stepsize}
0 < \om \leq \min \left\{ \frac{\mu}{18 \delta}, \frac{1}{4L}\right\}    
\end{equation}
for all $k$. Then the iterates produced by \algname{SPEG}, given by \eqref{SPEG_UpdateRule} satisfy
\begin{equation}
    R_{k}^2 \leq \left(1 - \frac{\omega\mu}{2}\right)^{k} R_0^2 + \frac{24\om \sigma_*^2}{\mu}, \label{eq:SPEG_const_steps_neighborhood}
\end{equation}
where $R_{k}^2 \coloneqq \E \left[\|x_{k} - x^*\|^2 + \|x_{k} - \hat{x}_{k-1}\|^2 \right]$. Hence, given any $\varepsilon > 0$, and choosing $\om = \min \left\{\frac{\mu}{18 \delta}, \frac{1}{4L}, \frac{\varepsilon \mu}{48 \sigma_*^2} \right\}$,
\algname{SPEG} achieves $\E \|x_K - x^*\|^2 \leq \varepsilon$ after $K \geq \max \bigg\{\frac{8L}{\mu}, \frac{36 \delta}{\mu^2}, \frac{96 \sigma_*^2}{\varepsilon \mu^2} \bigg\} \log \bigg( \frac{2 R_0^2}{\varepsilon} \bigg)$
iterations.
\end{theorem}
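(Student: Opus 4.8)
I would run a Lyapunov (potential‑function) argument with the potential $R_k^2 = \E\!\left[\|x_k - x^*\|^2 + \|x_k - \hat x_{k-1}\|^2\right]$ from the statement — the coupling term $\|x_k-\hat x_{k-1}\|^2$ being there to absorb the ``staleness'' of the reused gradient — and aim for the one‑step contraction
\begin{equation*}
R_{k+1}^2 \;\le\; \Bigl(1 - \tfrac{\omega\mu}{2}\Bigr) R_k^2 \;+\; 12\,\omega^2\sigma_*^2 ,
\end{equation*}
from which the theorem follows by unrolling. The structural fact that makes the single‑call scheme \eqref{SPEG_UpdateRule} tractable, and which I would isolate first, is this: letting $\mathcal{F}_k$ denote the $\sigma$‑algebra generated by all samples used through iteration $k-1$, the extrapolated point $\hat x_k = x_k - \gamma_k F_{v_{k-1}}(\hat x_{k-1})$ reuses only the \emph{past} sample $v_{k-1}$, so $x_k$, $\hat x_{k-1}$ and $\hat x_k$ are all $\mathcal{F}_k$‑measurable while $v_k$ is fresh; hence, writing $G_k := F_{v_k}(\hat x_k)$, one has $\E[G_k\mid\mathcal{F}_k] = F(\hat x_k)$, and Lemma~\ref{Lemma: variance bound} (the consequence of \ref{eq: ER Condition}) applied at $x=\hat x_k$ gives $\E[\|G_k\|^2\mid\mathcal{F}_k] \le \delta\|\hat x_k-x^*\|^2 + \|F(\hat x_k)\|^2 + 2\sigma_*^2 \le (\delta+L^2)\|\hat x_k-x^*\|^2 + 2\sigma_*^2$, using $L$‑Lipschitzness of $F$ and $F(x^*)=0$.

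Next I would expand $\|x_{k+1}-x^*\|^2 = \|x_k-x^*\|^2 - 2\omega\langle G_k,x_k-x^*\rangle + \omega^2\|G_k\|^2$ and $\|x_{k+1}-\hat x_k\|^2 = \|(x_k-\hat x_k)-\omega G_k\|^2$, split $x_k-x^* = (\hat x_k-x^*)+(x_k-\hat x_k)$, and take $\E[\cdot\mid\mathcal{F}_k]$: quasi‑strong monotonicity \eqref{eq: Strong Monotonicity} at $\hat x_k$ converts $-2\omega\langle F(\hat x_k),\hat x_k-x^*\rangle$ into the contractive $-2\omega\mu\|\hat x_k-x^*\|^2$, while each of the two expansions carries a cross term $-2\omega\langle F(\hat x_k),x_k-\hat x_k\rangle$. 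For these I would use $x_k-\hat x_k = \gamma_k F_{v_{k-1}}(\hat x_{k-1})$ together with the identity $-2\langle a,b\rangle = \|a-b\|^2-\|a\|^2-\|b\|^2$ taking $a=\omega F(\hat x_k)$, $b=x_k-\hat x_k$; this manufactures negative $-\|x_k-\hat x_k\|^2$ and $-\omega^2\|F(\hat x_k)\|^2$ terms — the latter exactly cancelling the $\|F(\hat x_k)\|^2$ produced by the variance bound of the first step — and a residual $\omega^2\|F(\hat x_k)-F_{v_{k-1}}(\hat x_{k-1})\|^2$ from each expansion. I would then control these residuals by inserting $F(\hat x_{k-1})$, invoking $L$‑Lipschitzness, and using $\hat x_{k-1}-\hat x_k = (x_k-\hat x_k)-(x_k-\hat x_{k-1})$ to re‑express them through $\|x_k-\hat x_k\|^2$ and the previous coupling term $\|x_k-\hat x_{k-1}\|^2$.

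I would then collect everything, apply $\|\hat x_k-x^*\|^2 \ge \tfrac12\|x_k-x^*\|^2 - \|x_k-\hat x_k\|^2$ to turn the contractive term into a genuine decrease of $\|x_k-x^*\|^2$, and read off an inequality whose $\|x_k-x^*\|^2$‑coefficient has the shape $1 - c_1\omega\mu + c_2\omega^2\delta + c_3\omega^2L^2$, whose $\|x_k-\hat x_{k-1}\|^2$‑coefficient has the shape $c_4\omega^2L^2$, with a surviving $-c_5\|x_k-\hat x_k\|^2$ ($c_5>0$) and an $O(\omega^2\sigma_*^2)$ remainder. Imposing $\omega\le\mu/(18\delta)$ makes every $\omega^2\delta$‑term a small fraction of $\omega\mu$, and $\omega\le 1/(4L)$ makes $\omega L\le 1/4$ and every $\omega^2L^2$‑term at most $1/16$, which simultaneously (i) keeps the $\|x_k-x^*\|^2$‑coefficient at most $1-\omega\mu/2$, (ii) keeps the coupling coefficient below $1-\omega\mu/2$, and (iii) keeps the $\|x_k-\hat x_k\|^2$ coefficient $\le 0$; the additive term then works out to $12\omega^2\sigma_*^2$, giving the target contraction. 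I expect this bookkeeping to be the main obstacle: the potential's coupling term is centred at $\hat x_{k-1}$ while the quantity produced by the reused past gradient is centred at $\hat x_k$ (it equals $\gamma_k^2\|F_{v_{k-1}}(\hat x_{k-1})\|^2 = \|x_k-\hat x_k\|^2$), so one must repeatedly and carefully trade between $\hat x_k$‑, $\hat x_{k-1}$‑ and $x^*$‑centred norms — and keep the residual operator‑difference terms from leaking into a spurious dependence on earlier iterates — without wasting constants; it is exactly this balancing that fixes the thresholds $\mu/(18\delta)$ and $1/(4L)$ in \eqref{eq:constant_stepsize} and the constant $24$ in \eqref{eq:SPEG_const_steps_neighborhood}.

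Finally, unrolling $R_{k+1}^2\le(1-\omega\mu/2)R_k^2+12\omega^2\sigma_*^2$ and summing the resulting geometric series gives $R_k^2\le(1-\omega\mu/2)^kR_0^2+\tfrac{24\omega\sigma_*^2}{\mu}$, which is \eqref{eq:SPEG_const_steps_neighborhood} since $R_0^2=\|x_0-x^*\|^2$ (because $\hat x_{-1}=x_0$). For the $\varepsilon$‑statement: $\E\|x_K-x^*\|^2\le R_K^2$, so choosing $\omega=\min\{\tfrac{\mu}{18\delta},\tfrac{1}{4L},\tfrac{\varepsilon\mu}{48\sigma_*^2}\}$ forces the noise floor $\tfrac{24\omega\sigma_*^2}{\mu}\le\tfrac\varepsilon2$, and $(1-\omega\mu/2)^KR_0^2\le\tfrac\varepsilon2$ as soon as $K\ge\tfrac{2}{\omega\mu}\log\tfrac{2R_0^2}{\varepsilon}$ (using $1-t\le e^{-t}$); since $\tfrac{2}{\omega\mu}=\max\{\tfrac{36\delta}{\mu^2},\tfrac{8L}{\mu},\tfrac{96\sigma_*^2}{\varepsilon\mu^2}\}$, this is exactly the claimed iteration count.
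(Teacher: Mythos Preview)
Your plan is correct and matches the paper's argument in all its essential ingredients (the Lyapunov function $R_k^2$, quasi-strong monotonicity at $\hat x_k$, the \ref{eq: ER Condition}-based variance bound, Lipschitzness, and Young-type reshuffling to land on $R_{k+1}^2\le(1-\tfrac{\omega\mu}{2})R_k^2+12\omega^2\sigma_*^2$). The one organizational difference worth noting: rather than expanding both squares and using polarization on the cross terms $-2\omega\langle F(\hat x_k),x_k-\hat x_k\rangle$, the paper exploits the exact identity $x_{k+1}-\hat x_k=-\omega\bigl(F_{v_k}(\hat x_k)-F_{v_{k-1}}(\hat x_{k-1})\bigr)$ (valid since $\gamma_k=\omega_k=\omega$), which together with a three-point identity yields directly
\[
\|x_{k+1}-x^*\|^2+\|x_{k+1}-\hat x_k\|^2=\|x_k-x^*\|^2-\|\hat x_k-x_k\|^2+2\omega^2\|F_{v_k}(\hat x_k)-F_{v_{k-1}}(\hat x_{k-1})\|^2-2\omega\langle F_{v_k}(\hat x_k),\hat x_k-x^*\rangle,
\]
and then bounds the stochastic gradient-difference term in a separate lemma. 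Your polarization step is doing the same computation implicitly (after conditioning on $v_k$ you are left with $\|F(\hat x_k)-F_{v_{k-1}}(\hat x_{k-1})\|^2$, which is precisely one piece of the paper's decomposition of that gradient-difference), so the two routes coincide after the dust settles; the paper's version just keeps the bookkeeping shorter.
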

To the best of our knowledge, the above theorem is the first result on the convergence of \algname{SPEG} that does not rely on the bounded variance assumption. Theorem~\ref{Theorem: constant stepsize theorem} recovers the same rate of convergence with the Independent-Samples \algname{SEG} (\algname{I-SEG}) under assumption \eqref{eq: variance bound} \citep{gorbunov2022stochastic}, although \cite{gorbunov2022stochastic} simply assume \eqref{eq: variance bound}, while we show that it follows from Assumption~\ref{as:expected_residual} holding whenever all summands $F_i$ are Lipschitz. However, in the case when all $F_i$ are $\mu$-quasi strongly monotone and $L_i$-Lipschitz (on average), one can use Same-Sample \algname{SEG} (\algname{S-SEG}). The existing results for \algname{S-SEG} have better exponentially decaying term \citep{mishchenko2020revisiting, gorbunov2022stochastic} then Theorem~\ref{Theorem: constant stepsize theorem}, e.g., in the case when $L_i = L $ for all $i\in [n]$, we have $\delta = \cO(L^2)$ meaning that the exponentially decaying term in \eqref{eq:SPEG_const_steps_neighborhood} is $\cO(R_0^2\exp(-\nicefrac{\mu^2k}{L^2})))$, while \algname{S-SEG} has much better exponentially decaying term $\cO(R_0^2\exp(-\nicefrac{\mu k}{L})))$. 

Such a discrepancy can be partially explained by the following fact: \algname{S-SEG} can be seen as one step of deterministic Extragradient for stochastic operator $F_{v_k}$ allowing to use one-iteration analysis of Extragradient without controlling the variance. In contrast, there is no version of \algname{SPEG} that uses the same sample for extrapolation and update steps. This forces to use different samples for these steps and this is a key reason why \algname{SPEG} cannot be seen as one iteration of deterministic Past-Extragradient for some operator. Due to this, we need to rely on some bound on the variance to handle the stochasticity in the updates; see also \citep[Appendix F.1]{gorbunov2022stochastic}. Therefore, in our analysis, we use Assumption~\ref{as:expected_residual}, implying \eqref{eq: variance bound}. Nevertheless, it is still an open question whether it is possible to improve the rate of \algname{SPEG} in the case of $\mu$-quasi strongly monotone and Lipschitz operators $F_i$.

To highlight the generality of Theorem~\ref{Theorem: constant stepsize theorem}, we note that for the
deterministic \algname{PEG}, $\delta = 0$ and $\sigma_*^2 = 0$ (by selecting $\tau=n$ in the definition~\ref{def:minibatch} of minibatch sampling). In this case, Theorem~\ref{Theorem: constant stepsize theorem} recovers the well-known result (up to $\nicefrac{1}{2}$ factor in the rate) for deterministic \algname{PEG} proposed in \cite{gidel2018variational} as shown in the following corollary.
\begin{corollary}
\label{dawna}
    Let the assumptions of Theorem~\ref{Theorem: constant stepsize theorem} hold and a deterministic version of \algname{SPEG} is considered, i.e., $\delta = 0$, $\sigma_*^2 = 0$. Then, Theorem~\ref{Theorem: constant stepsize theorem} implies that for all $k\geq 0$ the iterates produced by \algname{SPEG} with step-sizes $\gamma_k = \omega_k = \omega$ such that $ 0 < \om \leq \frac{1}{4L} $ satisfy $R_{k}^2 \leq \left(1 - \frac{\omega\mu}{2}\right)^{k} R_0^2$,
where $R_{k}^2 \coloneqq \|x_{k} - x^*\|^2 + \|x_{k} - \hat{x}_{k-1}\|^2$.
\end{corollary}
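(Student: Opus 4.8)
The plan is to obtain Corollary~\ref{dawna} as a direct specialization of Theorem~\ref{Theorem: constant stepsize theorem} to the deterministic regime, by substituting $\delta = 0$ and $\sigma_*^2 = 0$ and tracking how each ingredient of the theorem degenerates. First I would verify that the hypotheses of Theorem~\ref{Theorem: constant stepsize theorem} are in force. The operator $F$ is $L$-Lipschitz and $\mu$-quasi strongly monotone by assumption. For \ref{eq: ER Condition}: in the deterministic version of \algname{SPEG} the estimator coincides with the operator, $g(x) = F(x)$ for all $x$ (e.g., by taking $\tau = n$ in Definition~\ref{def:minibatch}, so that $v_i \equiv 1$ and $F_v \equiv F$), hence the left-hand side of \ref{eq: ER Condition} is identically zero and the condition holds for every $\delta > 0$; likewise $\sigma_*^2 = \E\|g(x^*)\|^2 = \|F(x^*)\|^2 = 0$.

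Next I would address the step-size constraint \eqref{eq:constant_stepsize}. Given any $\omega$ with $0 < \omega \leq \tfrac{1}{4L}$, it suffices to pick $\delta > 0$ small enough that $\tfrac{\mu}{18\delta} \geq \omega$, i.e. $\delta \leq \tfrac{\mu}{18\omega}$; since \ref{eq: ER Condition} holds for this (indeed for any) $\delta$, the requirement $0 < \omega \leq \min\{\tfrac{\mu}{18\delta}, \tfrac{1}{4L}\}$ of Theorem~\ref{Theorem: constant stepsize theorem} is met. Equivalently, one simply observes that $\tfrac{\mu}{18\delta} \to \infty$ as $\delta \to 0^+$, so the constraint degenerates to $0 < \omega \leq \tfrac{1}{4L}$, which is exactly the constraint stated in the corollary. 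Applying the conclusion \eqref{eq:SPEG_const_steps_neighborhood} of the theorem, the neighborhood term $\tfrac{24\omega\sigma_*^2}{\mu}$ vanishes because $\sigma_*^2 = 0$, leaving $R_k^2 \leq \bigl(1 - \tfrac{\omega\mu}{2}\bigr)^k R_0^2$ for all $k \geq 0$.

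Finally I would note that since the iterates $x_k$ and $\hat{x}_k$ produced by deterministic \algname{SPEG} are non-random, the expectation in the definition $R_k^2 \coloneqq \E[\|x_k - x^*\|^2 + \|x_k - \hat{x}_{k-1}\|^2]$ is vacuous, so $R_k^2 = \|x_k - x^*\|^2 + \|x_k - \hat{x}_{k-1}\|^2$, yielding precisely the claimed bound. There is essentially no real obstacle in this argument — it is a routine specialization — and the only mild subtlety worth a sentence is the technical point that Assumption~\ref{as:expected_residual} is stated with a strictly positive $\delta$; this is handled either by applying the theorem with an arbitrarily small admissible $\delta$, or equivalently by the limiting observation $\delta \to 0^+$ described above, both of which are legitimate because the left-hand side of \ref{eq: ER Condition} is identically zero in the deterministic case.
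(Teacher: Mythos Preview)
Your proposal is correct and matches the paper's approach: the paper does not give a standalone proof of this corollary but treats it as an immediate specialization of Theorem~\ref{Theorem: constant stepsize theorem}, obtained by setting $\delta = 0$ and $\sigma_*^2 = 0$ (via $\tau = n$ in Definition~\ref{def:minibatch}), whereupon the constraint $\omega \leq \nicefrac{\mu}{18\delta}$ becomes vacuous and the neighborhood term vanishes. Your handling of the technicality that Assumption~\ref{as:expected_residual} is stated with $\delta > 0$ is a nice extra touch the paper glosses over.
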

\paragraph{Decreasing Step-size:}In this section, we consider two different decreasing step-sizes policies for \algname{SPEG} applied to solve quasi-strongly monotone problems.
\begin{theorem}\label{SPEG switching rule}
Let $F$ be $L$-Lipschitz, $\mu$-quasi strongly monotone, and Assumption~\ref{as:expected_residual} hold. Let
\begin{equation}
    \gamma_k = \om_k \coloneqq 
\begin{cases}
\Bar{\om}, &\text{if } k \leq k^*, \\
\frac{2k+1}{(k+1)^2}\frac{2}{\mu}, &\text{if } k > k^*,
\end{cases}\label{eq:stepsize_switching_1}
\end{equation}
where $\Bar{\om} \coloneqq \min \left \{\nicefrac{1}{(4L)},\nicefrac{\mu}{(18 \delta)} \right\}$ and $k^* = \lceil \nicefrac{4}{(\mu \Bar{\om})} \rceil$. Then for all $K \geq k^*$ the iterates produced by \algname{SPEG} with step-sizes \eqref{eq:stepsize_switching_1} satisfy 
\begin{equation}
    R_{K}^2  \leq \left(\frac{k^*}{K}\right)^2 \frac{R_0^2}{\exp(2)} + \frac{192 \sigma_*^2}{\mu^2 K}, \label{eq:SPEG_convergence_decr_steps_1}
\end{equation}
where $R_{K}^2 \coloneqq \E \left[\|x_{K} - x^*\|^2 + \|x_{K} - \hat{x}_{K-1}\|^2 \right]$.
\end{theorem}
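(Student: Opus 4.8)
The plan is to derive Theorem~\ref{SPEG switching rule} from the single-iteration descent estimate that already underlies Theorem~\ref{Theorem: constant stepsize theorem}. First I would revisit that proof and extract, instead of its closed-form conclusion, the per-step recursion it produces: for any step-size with $\gamma_k = \omega_k \le \bar{\omega} \coloneqq \min\{\nicefrac{1}{(4L)},\nicefrac{\mu}{(18\delta)}\}$, the Lyapunov quantity $R_k^2 = \E[\|x_k - x^*\|^2 + \|x_k - \hat{x}_{k-1}\|^2]$ satisfies
\[
R_{k+1}^2 \le \Bigl(1 - \tfrac{\omega_k \mu}{2}\Bigr) R_k^2 + 12\,\omega_k^2 \sigma_*^2 .
\]
(Unrolling this with $\omega_k \equiv \omega$ and summing the geometric series recovers exactly the bound $R_k^2 \le (1 - \nicefrac{\omega\mu}{2})^k R_0^2 + \nicefrac{24\omega\sigma_*^2}{\mu}$ of Theorem~\ref{Theorem: constant stepsize theorem}, which is what fixes the constant $12$.) Since the schedule~\eqref{eq:stepsize_switching_1} keeps $\gamma_k = \omega_k$, and since for $k > k^*$ one has $\omega_k = \tfrac{2k+1}{(k+1)^2}\tfrac{2}{\mu} \le \tfrac{4}{\mu(k+1)} \le \tfrac{4}{\mu k^*} \le \bar{\omega}$ by the definition $k^* = \lceil \nicefrac{4}{(\mu\bar{\omega})}\rceil$, this recursion is valid at every iteration of the switched method.

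Second, I would split the run into the two phases of~\eqref{eq:stepsize_switching_1}. For $k \le k^*$ the constant-step analysis (i.e.\ Theorem~\ref{Theorem: constant stepsize theorem} with $\omega = \bar{\omega}$) together with $k^* \ge \nicefrac{4}{(\mu\bar{\omega})}$, hence $(1 - \nicefrac{\bar{\omega}\mu}{2})^{k^*} \le \exp(-\nicefrac{\bar{\omega}\mu k^*}{2}) \le \exp(-2)$, yields
\[
R_{k^*}^2 \le \frac{R_0^2}{\exp(2)} + \frac{24\,\bar{\omega}\,\sigma_*^2}{\mu}.
\]
For $k > k^*$ I would use the algebraic identity that makes the decreasing schedule telescope: $1 - \tfrac{\omega_k\mu}{2} = 1 - \tfrac{2k+1}{(k+1)^2} = \tfrac{k^2}{(k+1)^2}$. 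Multiplying the recursion by $(k+1)^2$ turns it into $(k+1)^2 R_{k+1}^2 \le k^2 R_k^2 + 12\,(k+1)^2\omega_k^2\sigma_*^2$; bounding $(k+1)^2\omega_k^2 = \tfrac{4(2k+1)^2}{\mu^2(k+1)^2} \le \tfrac{16}{\mu^2}$, summing from the switch point to $K-1$, and dividing by $K^2$ gives
\[
R_K^2 \le \frac{(k^*)^2}{K^2}\,R_{k^*}^2 + \frac{192\,\sigma_*^2}{\mu^2 K}.
\]

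Third, I would substitute the phase-one bound for $R_{k^*}^2$: the $R_0^2$-term becomes the claimed $\tfrac{(k^*)^2}{K^2}\tfrac{R_0^2}{\exp(2)}$, while the cross term $\tfrac{(k^*)^2}{K^2}\cdot\tfrac{24\bar{\omega}\sigma_*^2}{\mu}$ is absorbed into the $\mathcal{O}(\nicefrac{\sigma_*^2}{(\mu^2 K)})$ part using $K \ge k^*$ (so $\tfrac{(k^*)^2}{K^2} \le \tfrac{k^*}{K}$) and $k^*\bar{\omega} = \mathcal{O}(\nicefrac{1}{\mu})$, which follows from $\nicefrac{4}{\mu} \le k^*\bar{\omega} < \nicefrac{4}{\mu} + \bar{\omega}$ and $\bar{\omega} \le \nicefrac{1}{(4L)} \le \nicefrac{1}{(4\mu)}$ (using $\mu \le L$). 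I expect the only genuinely fiddly point to be the bookkeeping at the junction of the two phases — the iteration $k = k^*$ itself uses the constant step-size $\bar{\omega}$ rather than the decreasing formula, so one must check its contraction factor still fits under $\tfrac{(k^*)^2}{(k^*+1)^2}$ (it does: $\tfrac{\bar{\omega}\mu}{2}\ge\tfrac{2}{k^*}\ge\tfrac{2k^*+1}{(k^*+1)^2}$ reduces to $3k^*+2\ge 0$) and separately absorb the associated noise term, possibly at the cost of a slightly worse numerical constant than the displayed $\nicefrac{192\sigma_*^2}{(\mu^2 K)}$. Everything else is the standard Stich-type switching-schedule computation, and Assumption~\ref{as:expected_residual} enters only through the validity of the one-step recursion, which is inherited verbatim from the proof of Theorem~\ref{Theorem: constant stepsize theorem}.
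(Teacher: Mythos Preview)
Your proposal is correct and follows essentially the same route as the paper: extract the one-step recursion $R_{k+1}^2 \le (1-\tfrac{\omega_k\mu}{2})R_k^2 + 12\omega_k^2\sigma_*^2$, run the constant phase to bound $R_{k^*}^2$, then telescope the decreasing phase via the identity $1-\tfrac{\omega_k\mu}{2}=\tfrac{k^2}{(k+1)^2}$. The only bookkeeping difference is that the paper retains the factor $(k-k^*)$ in the telescoped noise term $\tfrac{192\sigma_*^2(k-k^*)}{\mu^2(k+1)^2}$ and merges it with the cross term $(\tfrac{k^*}{k+1})^2\tfrac{24\bar\omega\sigma_*^2}{\mu}$ (treating $\bar\omega\approx\tfrac{4}{\mu k^*}$) \emph{before} bounding, which is precisely how it lands on the displayed constant $192$; as you already anticipated, absorbing the cross term separately would inflate that constant slightly.
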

\algname{SPEG} with step-size policy\footnote{Similar step-size policy is used for \algname{SGD} \citep{gower2019sgd} and \algname{SGDA} \citep{loizou2021stochastic}.} \eqref{eq:stepsize_switching_1} has two stages of convergence: during first $k^*$ iterations it uses constant step-size to reach some neighborhood of the solution and then the method switches to the decreasing $\cO(\nicefrac{1}{k})$ step-size allowing to reduce the size of the neighborhood.

For the case of strongly monotone problems (a special case of our quasi-strongly monotone setting) \cite{hsieh2019convergence} also analyze \algname{SPEG} with decreasing $\cO(\nicefrac{1}{k})$ step-size\footnote{We point out the proof by \cite{hsieh2019convergence} can be generalized to the case of constant step-size, though the authors do not consider this step-size schedule explicitly.} under bounded variance assumption, i.e., when \eqref{eq: variance bound} holds with $\delta = 0$ and some $\sigma_*^2 \geq 0$, which is equivalent to the uniformly bounded variance assumption. In particular, Theorem 5 \cite{hsieh2019convergence} states
$\E\left[\|x_{K} - x^*\|^2\right] \leq \frac{C\sigma_*^2}{\mu^2 K} + o\left(\frac{1}{K}\right)$ where $C$ is some numerical constant. If the problem is strongly monotone, the result of \cite{hsieh2019convergence} is closely related to what is obtained in Theorem~\ref{SPEG switching rule}: the main difference in the upper-bound is that we provide an explicit form of $o\left(\nicefrac{1}{K}\right)$ term. Moreover, in contrast to the result from \citep{hsieh2019convergence}, Theorem~\ref{SPEG switching rule} holds even when $\delta > 0$ in \eqref{eq: variance bound}, which covers a larger class of problems. 

Following \cite{stich2019unified, gorbunov2022stochastic, beznosikov2022stochastic}, we also consider another decreasing step-size policy.
\begin{theorem} \label{Theorem: Total number of iteration knowledge}
Let $F$ be $L$-Lipschitz, $\mu$-quasi strongly monotone, and Assumption~\ref{as:expected_residual} hold. Let $\Bar{\om} \coloneqq \min \left\{ \nicefrac{1}{(4L)}, \nicefrac{\mu}{(18\delta)} \right\}$. If for $K \geq 0$ step-sizes $\{\gamma_k\}_{k \geq 0}$, $\{\om_k\}_{k \geq 0}$ satisfy $\gamma_k = \om_k$ and
\begin{equation}\label{eq:stepsize_switching_2}
    \om_k \coloneqq 
\begin{cases}
\Bar{\om}, &\text{if $K \leq \frac{2}{\mu \Bar{\om}}$}, \\
\Bar{\om}, &\text{if $K > \frac{2}{\mu \Bar{\om}}$ and $k \leq k_0$,}\\
\frac{2}{\frac{2}{\Bar{\om}} + \frac{\mu}{2}(k - k_0)}, &\text{if $K > \frac{2}{\mu \Bar{\om}}$ and $k > k_0$}
\end{cases}
\end{equation}
where $k_0 = \lceil \nicefrac{K}{2} \rceil$, then the iterates produced by \algname{SPEG} with the step-sizes defined above satisfy
\begin{equation}\label{rate for Total number of iteration knowledge}
\begin{split}
R_K^2\leq \frac{64R_0^2}{\Bar{\om} \mu} \exp \left\{-\min \left\{ \frac{\mu}{16L}, \frac{\mu^2}{72\delta} \right\}K \right\} + \frac{ 1728\sigma_*^2}{\mu^2 K},
\end{split}    
\end{equation}
where $R_{K}^2 \coloneqq \E \left[\|x_{K} - x^*\|^2 + \|x_{K} - \hat{x}_{K-1}\|^2 \right]$.
\end{theorem}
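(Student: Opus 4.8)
The plan is to derive \eqref{rate for Total number of iteration knowledge} from the one-step contraction that already underlies Theorem~\ref{Theorem: constant stepsize theorem}, combined with a switching-step-size recurrence argument in the spirit of \cite{stich2019unified, gorbunov2022stochastic, beznosikov2022stochastic}. The analysis behind Theorem~\ref{Theorem: constant stepsize theorem} (through the bound \eqref{eq: variance bound}) produces, for every $k$ and every step-size with $\gamma_k=\om_k\le\Bar{\om}=\min\{\nicefrac{1}{(4L)},\nicefrac{\mu}{(18\delta)}\}$, a per-step estimate of the form
\[
R_{k+1}^2 \;\le\; \Bigl(1-\tfrac{\mu\om_k}{2}\Bigr)R_k^2 \;+\; c_0\,\om_k^2\,\sigma_*^2,
\]
with $c_0$ an absolute constant (unrolling this with $\om_k\equiv\om$ and matching the neighborhood $\nicefrac{24\om\sigma_*^2}{\mu}$ of \eqref{eq:SPEG_const_steps_neighborhood} pins down $c_0$). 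The first thing to verify is that the schedule \eqref{eq:stepsize_switching_2} always satisfies $\om_k\le\Bar{\om}$: on the decaying branch $\om_k=\frac{2}{\frac{2}{\Bar{\om}}+\frac{\mu}{2}(k-k_0)}\le\Bar{\om}$, so the recursion applies on all three branches.

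If $K\le\nicefrac{2}{(\mu\Bar{\om})}$ the step-size is constant, so Theorem~\ref{Theorem: constant stepsize theorem} directly gives $R_K^2\le(1-\nicefrac{\mu\Bar{\om}}{2})^K R_0^2+\nicefrac{24\Bar{\om}\sigma_*^2}{\mu}$; since $\Bar{\om}\le\nicefrac{2}{(\mu K)}$ here the second term is at most $\nicefrac{48\sigma_*^2}{(\mu^2K)}\le\nicefrac{1728\sigma_*^2}{(\mu^2K)}$, and because $\nicefrac{2}{(\mu\Bar{\om})}=\max\{\nicefrac{8L}{\mu},\nicefrac{36\delta}{\mu^2}\}$ forces $\min\{\nicefrac{\mu}{(16L)},\nicefrac{\mu^2}{(72\delta)}\}K\le1$ while $\nicefrac{64}{(\Bar{\om}\mu)}\ge256$ (using $\Bar{\om}\le\nicefrac{1}{(4L)}$ and $\mu\le L$), the prefactor $\frac{64R_0^2}{\Bar{\om}\mu}\exp(-\min\{\cdots\}K)$ already dominates $(1-\nicefrac{\mu\Bar{\om}}{2})^KR_0^2$, which gives \eqref{rate for Total number of iteration knowledge}.

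If $K>\nicefrac{2}{(\mu\Bar{\om})}$ I would argue in two phases. For $k\le k_0=\lceil\nicefrac{K}{2}\rceil$ the step-size is still $\Bar{\om}$, so Theorem~\ref{Theorem: constant stepsize theorem} gives $R_{k_0}^2\le\exp(-\nicefrac{\mu\Bar{\om}k_0}{2})R_0^2+\nicefrac{24\Bar{\om}\sigma_*^2}{\mu}$. For $k>k_0$, with $\gamma_k=\frac{2}{\frac{2}{\Bar{\om}}+\frac{\mu}{2}(k-k_0)}$, multiplying the one-step recursion by the weights $w_k=\bigl(\frac{2}{\Bar{\om}}+\frac{\mu}{2}(k-k_0)\bigr)^2$ (for which $w_{k+1}(1-\nicefrac{\mu\gamma_k}{2})\le w_k$, since $(a+\nicefrac{\mu}{2})^2(a-\mu)\le a^3$) and telescoping from $k_0$ to $K$ yields the classical bound
\[
R_K^2 \;\le\; \frac{(\nicefrac{2}{\Bar{\om}})^2}{\bigl(\frac{2}{\Bar{\om}}+\frac{\mu}{2}(K-k_0)\bigr)^2}\,R_{k_0}^2 \;+\; \frac{c_1\,\sigma_*^2}{\mu^2(K-k_0)}.
\]
Substituting the phase-one bound for $R_{k_0}^2$, using $\frac{2}{\Bar{\om}}+\frac{\mu}{2}(K-k_0)\ge\frac{\mu}{2}(K-k_0)$ to fold the leftover neighborhood $\nicefrac{24\Bar{\om}\sigma_*^2}{\mu}$ into an $\cO(\nicefrac{\sigma_*^2}{(\mu^2(K-k_0))})$ term (via $K-k_0\ge\nicefrac{1}{(2\mu\Bar{\om})}$, which follows from $K>\nicefrac{2}{(\mu\Bar{\om})}$), dropping the prefactor $\le1$ on the exponential part, and using $k_0\ge\nicefrac{K}{2}$, $K-k_0\ge\nicefrac{K}{4}$, $\nicefrac{64}{(\Bar{\om}\mu)}\ge1$, and $\tfrac{\mu\Bar{\om}}{4}=\min\{\nicefrac{\mu}{(16L)},\nicefrac{\mu^2}{(72\delta)}\}$, all terms collect into the right-hand side of \eqref{rate for Total number of iteration knowledge}.

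The only real work is bookkeeping: checking the weight inequality $w_{k+1}(1-\nicefrac{\mu\gamma_k}{2})\le w_k$ and the weighted noise sum $\sum_k w_{k+1}\gamma_k^2$, and tracking the absolute constants so that the four ingredients — phase-one neighborhood, phase-two decreasing noise, and the two exponential contributions — all fit under $1728$ and $\min\{\nicefrac{\mu}{(16L)},\nicefrac{\mu^2}{(72\delta)}\}$. There is no new conceptual step beyond the one-step inequality of Theorem~\ref{Theorem: constant stepsize theorem} and the classical switching-step-size lemma; the main obstacle, if any, is keeping the constant in the $\nicefrac{\sigma_*^2}{(\mu^2K)}$ term from degrading when the neighborhood left over after $k_0$ iterations is merged with the genuinely decreasing noise of the second phase, which is exactly why the switch is placed at $k_0=\lceil\nicefrac{K}{2}\rceil$.
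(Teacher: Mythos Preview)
Your proposal is correct and follows essentially the same route as the paper: start from the one-step recursion $R_{k+1}^2 \le (1-\tfrac{\mu\om_k}{2})R_k^2 + 12\om_k^2\sigma_*^2$ established in the proof of Theorem~\ref{Theorem: constant stepsize theorem} (valid whenever $\om_k\le\Bar{\om}$), and then exploit the switching schedule. The only difference is one of packaging: the paper plugs this recursion directly into the ready-made Lemma~\ref{Lemma: Stich lemma} from \cite{stich2019unified} with $a=\tfrac{\mu}{2}$, $h=\tfrac{1}{\Bar{\om}}$, $c=12\sigma_*^2$, which immediately outputs $R_K^2\le \tfrac{32hR_0^2}{a}\exp(-\tfrac{aK}{2h})+\tfrac{36c}{a^2K}=\tfrac{64R_0^2}{\Bar{\om}\mu}\exp(-\tfrac{\mu\Bar{\om}}{4}K)+\tfrac{1728\sigma_*^2}{\mu^2K}$, whereas you unroll that lemma by hand via the two-phase weighted-telescoping argument (your weight check $(a+\tfrac{\mu}{2})^2(a-\mu)\le a^3$ is exactly the key step inside the Stich lemma). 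Your route buys a self-contained proof and makes the role of $k_0=\lceil K/2\rceil$ transparent; the paper's route buys brevity and avoids all the constant bookkeeping you flag at the end.
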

In contrast to \eqref{eq:SPEG_convergence_decr_steps_1}, the rate from \eqref{rate for Total number of iteration knowledge} has much better (exponentially decaying) $o\left(\nicefrac{1}{K}\right)$ term. When $\sigma_*^2$ is large and one needs to achieve very good accuracy of the solution, this difference is negligible, since the dominating $\cO(\nicefrac{1}{K})$ term is the same for both bounds (up to numerical factors). However, when $\sigma_*^2$ is small enough, e.g., the model is close to over-parameterized, or it is sufficient to achieve low accuracy of the solution, the dominating term in \eqref{rate for Total number of iteration knowledge} is typically much smaller than the one from \eqref{eq:SPEG_convergence_decr_steps_1}. Finally, it is worth mentioning, that the improvement of $o\left(\nicefrac{1}{K}\right)$ is not achieved for free: unlike the policy from \eqref{eq:stepsize_switching_1}, step-size rule \eqref{eq:stepsize_switching_2} relies on the knowledge of the total number of steps $K$, which can be inconvenient for the practical use in some cases.
\subsection{Weak Minty Variational Inequality Problems}
In this subsection we will discuss convergence of Stochastic Past Extragradient method for Minty Variational Inequality problem.
To solve the Minty variational inequality problem we use different step-sizes for \algname{SPEG} iterates (\ref{SPEG_UpdateRule}).  
\begin{theorem}\label{cor:weak_MVI_convergence}
    Let $F$ be $L$-Lipschitz and satisfy Weak Minty condition with parameter $\rho < \nicefrac{1}{(2L)}$. Let Assumption~\ref{as:expected_residual} hold. Assume that $\gamma_k = \gamma$, $\omega_k = \omega$ such that $\max\left\{2\rho, \frac{1}{2L}\right\} < \gamma < \frac{1}{L}$ and $0 < \omega < \min\left\{\gamma - 2\rho, \frac{1}{4L} - \frac{\gamma}{4}\right\}.$
    Then, for all $K \geq 2$ the iterates produced by mini-batched \algname{SPEG} with batch-size 
    \begin{eqnarray}
        \tau \geq \max\Bigg\{1, \frac{32\delta}{(1-L\gamma)L^3\omega}, \frac{48\omega\gamma \delta(K-1)}{(1 - L\gamma)^2},\frac{2\omega\gamma\sigma_*^2(K-1)}{(1-L\gamma)\|x_0 - x^*\|^2}\Bigg\} \label{eq:SPEG_weak_MVI_batchsize}
    \end{eqnarray}
    satisfy $\min\limits_{0\leq k \leq K-1}\E\left[\|F(\hat x_k)\|^2\right] \leq \frac{C\|x_0 - x^*\|^2}{K-1},$
where $C = \frac{48}{\omega\gamma (1 - L(\gamma + 4\omega))}$.
\end{theorem}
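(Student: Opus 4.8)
The plan is to construct a Lyapunov function of the form $\Phi_k = \E\left[\|x_k - x^*\|^2 + c\,\gamma\omega\|F_{v_{k-1}}(\hat x_{k-1})\|^2\right]$ for a suitable constant $c$, and to show that along the \algname{SPEG} iterates the per-step decrease of $\Phi_k$ dominates a positive multiple of $\omega\gamma\,\E\|F(\hat x_k)\|^2$, up to an additive variance term that is controlled by the batch-size condition~\eqref{eq:SPEG_weak_MVI_batchsize}. Telescoping then yields $\sum_{k=0}^{K-1}\omega\gamma\,\E\|F(\hat x_k)\|^2 \lesssim \|x_0-x^*\|^2$, and replacing the sum by $K$ times its minimum gives the claimed $\cO(1/(K-1))$ bound. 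First I would write $x_{k+1} - x^* = (x_k - x^*) - \omega F_{v_k}(\hat x_k)$, expand the squared norm, and take conditional expectation to turn $F_{v_k}(\hat x_k)$ into $F(\hat x_k)$ in the cross term:
\begin{equation*}
\E_k\|x_{k+1}-x^*\|^2 = \|x_k - x^*\|^2 - 2\omega\la F(\hat x_k), x_k - x^*\ra + \omega^2 \E_k\|F_{v_k}(\hat x_k)\|^2.
\end{equation*}
The cross term is split as $\la F(\hat x_k), x_k - x^*\ra = \la F(\hat x_k), \hat x_k - x^*\ra + \la F(\hat x_k), x_k - \hat x_k\ra$; the first piece is bounded below by $-\rho\|F(\hat x_k)\|^2$ via the Weak MVI condition~\eqref{eq: weak MVI}, and the second piece, using $\hat x_k = x_k - \gamma F_{v_{k-1}}(\hat x_{k-1})$, becomes $\gamma\la F(\hat x_k), F_{v_{k-1}}(\hat x_{k-1})\ra$, which I would handle with Young's inequality to produce a $-\|F(\hat x_k)\|^2$ contribution and a $+\|F_{v_{k-1}}(\hat x_{k-1})\|^2$ contribution — the latter being exactly what the $\|F_{v_{k-1}}(\hat x_{k-1})\|^2$ term in $\Phi_k$ absorbs from the previous step.

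Next I would bound the second-moment terms $\E_k\|F_{v_k}(\hat x_k)\|^2$ and $\|F_{v_{k-1}}(\hat x_{k-1})\|^2$ using Lemma~\ref{Lemma: variance bound}, i.e. $\E\|g(x)\|^2 \le \delta\|x-x^*\|^2 + \|F(x)\|^2 + 2\sigma_*^2$, applied at $x = \hat x_k$ and $x = \hat x_{k-1}$. This is where the batch-size enters: in the mini-batched version $\delta$ and $\sigma_*^2$ both scale as $1/\tau$ (Proposition~\ref{Prop_SufficientCondition}), so the $\delta\|\hat x_k - x^*\|^2$ terms must be reabsorbed into the main $\|x_k - x^*\|^2$ decrease — this requires relating $\|\hat x_k - x^*\|^2$ back to $\|x_k - x^*\|^2$ and $\gamma^2\|F_{v_{k-1}}(\hat x_{k-1})\|^2$ through $\hat x_k = x_k - \gamma F_{v_{k-1}}(\hat x_{k-1})$, and the first term in the batch-size lower bound, $\tau \ge 32\delta/((1-L\gamma)L^3\omega)$, is precisely what makes these cross-contributions harmless. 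The $L$-Lipschitzness of $F$ is used to pass between $\|F(\hat x_k)\|^2$ and $\|F(x_k)\|^2$-type quantities where needed, and the step-size window $\max\{2\rho, 1/(2L)\} < \gamma < 1/L$ together with $0 < \omega < \min\{\gamma - 2\rho, (4-\gamma L)/(4L)\}$ is exactly the range that keeps the coefficient of $\E\|F(\hat x_k)\|^2$ strictly positive (equal to $\omega\gamma(1-L(\gamma+4\omega))/48$ after collecting constants, matching the stated $C$).

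Assembling these estimates gives a recursion of the shape
\begin{equation*}
\Phi_{k+1} \le \left(1 + \tfrac{48\omega\gamma\delta}{\tau(1-L\gamma)^2}\right)\Phi_k - \tfrac{\omega\gamma(1-L(\gamma+4\omega))}{48}\,\E\|F(\hat x_k)\|^2 + (\text{$\sigma_*^2$ term}),
\end{equation*}
where the multiplicative factor $(1 + 48\omega\gamma\delta/(\tau(1-L\gamma)^2))$ is bounded by a constant (say $\le e$ or $\le 2$) over $k \le K-1$ precisely because $\tau \ge 48\omega\gamma\delta(K-1)/(1-L\gamma)^2$, and the accumulated $\sigma_*^2$ term is $\le \|x_0 - x^*\|^2$ because $\tau \ge 2\omega\gamma\sigma_*^2(K-1)/((1-L\gamma)\|x_0-x^*\|^2)$. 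Telescoping from $0$ to $K-1$, using $\Phi_0 = \|x_0-x^*\|^2$ (since $\hat x_{-1} = x_0$ makes the extra term vanish or be absorbed), dropping $\Phi_K \ge 0$, and lower-bounding the sum by $(K-1)\min_k \E\|F(\hat x_k)\|^2$ gives the result. The main obstacle I anticipate is the bookkeeping in the second paragraph: carefully choosing the Lyapunov coefficient $c$ and the Young's-inequality splitting parameters so that (a) the $\|F_{v_{k-1}}(\hat x_{k-1})\|^2$ terms telescope cleanly, (b) the $\delta$-contributions from Lemma~\ref{Lemma: variance bound} at both $\hat x_k$ and $\hat x_{k-1}$ are simultaneously dominated, and (c) the leftover coefficient on $\E\|F(\hat x_k)\|^2$ comes out exactly as $1/C$ — tracking all the numerical constants through the step-size window is the delicate part, whereas the structural argument is a fairly standard single-call extragradient Lyapunov analysis.
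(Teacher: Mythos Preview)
Your high-level plan (Lyapunov plus telescoping, with the batch-size conditions~\eqref{eq:SPEG_weak_MVI_batchsize} absorbing the $\delta/\tau$ and $\sigma_*^2/\tau$ residues) is the right shape, and your handling of the variance terms via Lemma~\ref{Lemma: variance bound} and the $1/\tau$ scaling from Proposition~\ref{Prop_SufficientCondition} matches what the paper does in its corollary step. The gap is in how you extract the \emph{descent} term $-\|F(\hat x_k)\|^2$.

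You propose to upper-bound the inner product $-2\omega\gamma\langle F(\hat x_k), F_{v_{k-1}}(\hat x_{k-1})\rangle$ with Young's inequality to ``produce a $-\|F(\hat x_k)\|^2$ contribution.'' That cannot work: Young applied as an upper bound always gives \emph{non-negative} coefficients on both squared norms, so after Weak MVI you are left with the coefficient $2\omega\rho + \omega^2 + (\text{Young terms}) > 0$ in front of $\|F(\hat x_k)\|^2$, and the extra $c\gamma\omega\|F_{v_{k-1}}(\hat x_{k-1})\|^2$ piece of your Lyapunov only shifts indices---it never creates a negative sum. The paper instead keeps the \emph{stochastic} inner product $\langle F_{v_k}(\hat x_k), F_{v_{k-1}}(\hat x_{k-1})\rangle$ and applies the polarization \emph{identity} $2\langle a,b\rangle = \|a\|^2 + \|b\|^2 - \|a-b\|^2$, which genuinely yields
\[
-\,\omega\gamma\,\E\|F_{v_{k-1}}(\hat x_{k-1})\|^2 \;-\; \omega(\gamma-2\rho-\omega)\,\E\|F_{v_k}(\hat x_k)\|^2 \;+\; \omega\gamma\,\E\|F_{v_k}(\hat x_k) - F_{v_{k-1}}(\hat x_{k-1})\|^2.
\]
The step-size window $\gamma > 2\rho+\omega$ is precisely what makes the middle coefficient negative.

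The price is the new term $\Delta_k \eqdef \E\|F_{v_k}(\hat x_k) - F_{v_{k-1}}(\hat x_{k-1})\|^2$, and controlling it is the actual technical core, which is absent from your plan. The paper does \emph{not} bound $\Delta_k$ directly by Lemma~\ref{Lemma: variance bound}; instead it uses the algorithmic identity $\hat x_k - \hat x_{k-1} = -(\gamma+\omega)F_{v_{k-1}}(\hat x_{k-1}) + \gamma F_{v_{k-2}}(\hat x_{k-2})$ together with a Young split with parameter $\alpha = \tfrac{1}{2L^2\gamma(\gamma+\omega)} - \tfrac12$ to derive a \emph{recursive} estimate of the form $\Delta_k \lesssim (\text{const}<1)\cdot\Delta_{k-1} + (\text{operator norms}) + (\text{variance})$. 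Summing this with geometrically increasing weights $w_{k-1} = (1 + 3C_3\omega\gamma\delta)w_k$ is what produces the factor $(1 + 48\omega\gamma\delta/(\tau(1-L\gamma)^2))^{K-1}$ and the specific constant $C = 48/(\omega\gamma(1-L(\gamma+4\omega)))$. Your two-term Lyapunov $\Phi_k$ is too small to absorb this recursion; the paper's weighted-sum argument is effectively a Lyapunov that also carries $\Delta_{k-1}$.
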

The above result establishes $\cO(\nicefrac{1}{K})$ convergence with $\cO(K)$ batchsizes for \algname{SPEG} applied to problems satisfying Weak Minty condition.\footnote{See also Appendix~\ref{AppendixE5} for a discussion related to the oracle complexity of Theorem \ref{cor:weak_MVI_convergence}.} The closest result is obtained by \cite{bohm2022solving}, for the same method under bounded variance assumption, i.e., when $\delta = 0$. In particular, the result of \cite{bohm2022solving} also gives $\cO(\nicefrac{1}{K})$ rate and requires $\cO(K)$ batchsizes at each step. We extend this result to the case of non-zeroth $\delta$ and we also improve the assumption on $\rho$: \cite{bohm2022solving} assumes that $\rho < \nicefrac{3}{8L}$, while Theorem~\ref{cor:weak_MVI_convergence} holds for $\rho < \nicefrac{1}{2L}$. The bound on $\rho$ cannot be improved even in the deterministic case \citep{gorbunov2022convergence}. Moreover, it is worth mentioning that the proof of Theorem~\ref{cor:weak_MVI_convergence} noticeably differs from the one obtained by \cite{bohm2022solving}.

In the case of a deterministic oracle, we recover the best-known result for Optimistic Gradient in the Weak Minty setup \citep{bohm2022solving, gorbunov2022convergence}.

\begin{corollary}
    Let the assumptions of Theorem~\ref{cor:weak_MVI_convergence} hold and deterministic version of \algname{SPEG} is considered, i.e., $\delta = 0$, $\sigma_*^2 = 0$. Then, Theorem~\ref{cor:weak_MVI_convergence} implies that for all $k\geq 0$ the iterates produced by \algname{SPEG} with step-sizes $\max\left\{2\rho, \frac{1}{2L}\right\} < \gamma < \frac{1}{L}$ and $0 < \omega < \min\left\{\gamma - 2\rho, \frac{1}{4L} - \frac{\gamma}{4}\right\}$
    satisfy $\min\limits_{0\leq k \leq K-1}\|F(\hat x_k)\|^2 \leq \frac{C\|x_0 - x^*\|^2}{K-1},$
    where $C = \frac{48}{\omega\gamma (1 - L(\gamma + 4\omega))}$.
\end{corollary}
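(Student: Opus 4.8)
The plan is to apply Theorem~\ref{cor:weak_MVI_convergence} directly in the degenerate regime $\delta = 0$, $\sigma_*^2 = 0$, and to track what the batch-size condition \eqref{eq:SPEG_weak_MVI_batchsize} reduces to. First I would observe that the hypotheses on $\rho$, $\gamma$, and $\omega$ in the corollary are verbatim those of Theorem~\ref{cor:weak_MVI_convergence}, so the only thing to check is that a deterministic oracle (i.e.\ the sampling vector $v \equiv (1,\ldots,1)$, equivalently $\tau = n$ in Definition~\ref{def:minibatch}, so that $g = F$ and $\hx_k, x_{k+1}$ follow the deterministic Past Extragradient recursion) is consistent with the batch-size requirement. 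Plugging $\delta = 0$ and $\sigma_*^2 = 0$ into \eqref{eq:SPEG_weak_MVI_batchsize}, the second, third, and fourth terms inside the $\max$ all vanish, leaving $\tau \geq 1$, which is trivially satisfied. Hence Theorem~\ref{cor:weak_MVI_convergence} applies with its conclusion becoming $\min_{0\le k\le K-1}\E[\|F(\hx_k)\|^2] \le \frac{C\|x_0-x^*\|^2}{K-1}$ with $C = \frac{48}{\omega\gamma(1-L(\gamma+4\omega))}$; since every quantity is now deterministic, the expectation is vacuous and we get the stated bound $\min_{0\le k\le K-1}\|F(\hx_k)\|^2 \le \frac{C\|x_0-x^*\|^2}{K-1}$.

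The only slightly delicate point is making sure that the convergence \emph{rate} constant in the deterministic specialization is exactly the one written, rather than the more complicated expression appearing in Table~\ref{tab:comparison_of_rates} (which carries the extra factor $\bigl(1 + \tfrac{48\omega\gamma\delta}{\tau(1-L\gamma)^2}\bigr)^K$). With $\delta = 0$ that geometric factor equals $1^K = 1$ and the second (variance) term of the tabulated rate drops out because $\sigma_*^2 = 0$, so the table rate collapses precisely to $\frac{48\|x_0-x^*\|^2}{K\,\omega\gamma(1-L(\gamma+4\omega))}$, matching $C$ above up to the harmless $K$ versus $K-1$ bookkeeping inherited from Theorem~\ref{cor:weak_MVI_convergence}. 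I would state this reduction explicitly so the reader sees the blow-up factor is genuinely inert here.

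I would then add one sentence noting consistency with the literature: the resulting guarantee is the best-known deterministic rate for Optimistic Gradient / Past Extragradient under the weak Minty condition, with the admissible range $\rho < \nicefrac{1}{(2L)}$ matching the threshold shown to be unimprovable in \cite{gorbunov2022convergence}. There is essentially no obstacle in this proof: it is a pure specialization argument, and the only thing one must be careful about is confirming that the deterministic oracle is a valid instance of the arbitrary-sampling framework (it is, via $\tau = n$), so that Theorem~\ref{cor:weak_MVI_convergence} is genuinely applicable rather than merely morally so.
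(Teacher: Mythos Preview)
Your proposal is correct and matches the paper's approach: the corollary is stated as an immediate specialization of Theorem~\ref{cor:weak_MVI_convergence}, with no separate proof given, precisely because setting $\delta = 0$ and $\sigma_*^2 = 0$ collapses the batch-size requirement \eqref{eq:SPEG_weak_MVI_batchsize} to $\tau \geq 1$ and removes the variance term, leaving exactly the stated bound with the expectation now vacuous. Your additional remarks about the table rate and the $\rho < \nicefrac{1}{(2L)}$ threshold from \cite{gorbunov2022convergence} are accurate and echo the paper's own commentary surrounding the corollary.
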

\section{Beyond Uniform Sampling}\label{section: Arbitrary Sampling}
In this section, we illustrate the generality of our analysis by focusing on the non-uniform sampling. In particular, we focus on \emph{single-element sampling} in which only the singleton sets $\{i\}$ for $i=\{1,\ldots, n\}$ have a non-zero probability of being sampled; that is,
$\Prob{|S|=1} = 1$. We have $\Prob{v = e_i/p_i} = p_i$. \cite{gower2019sgd} proved that if $v$ is a single-element sampling, it is also a valid sampling vector ($\Exp_{\cD}[v_i]  = 1$). With the following proposition, we provide closed-form expressions for the \ref{eq: ER Condition} parameter $\delta$ and $\sigma_*^2 = \E\|g(x^*)\|^2$ for the case of (non-uniform) single-element sampling.

\begin{proposition}\label{Prop_SingleElement}
Let $F_i$ of problem~\eqref{eq: Variational Inequality Definition} be $L_i$-Lipschitz operators.  If, vector $v \in \R^n$ is a single element sampling then $\delta = \frac{2}{n^2} \sum_{i = 1}^n \frac{L_i^2}{p_i}$ and $\sigma_*^2 = \frac{1}{n^2} \sum_{i=1}^n \frac{1}{p_i} \|F_i(x^*)\|^2.$
\end{proposition}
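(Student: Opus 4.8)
The plan is to verify \ref{eq: ER Condition} by a direct second-moment computation, exactly parallel to the proof of Proposition~\ref{Prop_SufficientCondition} for $\tau$-minibatch sampling, and then read off $\delta$ and $\sigma_*^2$. The first step is to observe that single-element sampling makes the estimator \eqref{gEstimator} collapse to a single scaled component: when $v = e_i/p_i$, which occurs with probability $p_i$, only the $i$-th summand survives and $v_i = 1/p_i$, so $g(x) = F_v(x) = \tfrac{1}{n p_i} F_i(x)$. Thus $g$ is a discrete random vector supported on the $n$ points $\tfrac{1}{np_i}F_i(x)$.

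Next, I would use unbiasedness to reduce \ref{eq: ER Condition} to a pure second-moment bound. Since $\E_{\cD}[g(x)] = F(x)$, the random vector $X := g(x) - g(x^*)$ has mean $\E X = F(x) - F(x^*)$, so by the bias--variance identity $\E\| X - \E X\|^2 = \E\|X\|^2 - \|\E X\|^2 \le \E\|g(x) - g(x^*)\|^2$. It then remains to bound the right-hand side. Summing over the $n$ outcomes, $\E\|g(x) - g(x^*)\|^2 = \sum_{i=1}^n p_i \big\| \tfrac{1}{np_i}(F_i(x) - F_i(x^*)) \big\|^2 = \tfrac{1}{n^2}\sum_{i=1}^n \tfrac{1}{p_i}\|F_i(x) - F_i(x^*)\|^2$. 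Invoking $L_i$-Lipschitzness of each $F_i$, i.e. $\|F_i(x) - F_i(x^*)\|^2 \le L_i^2\|x - x^*\|^2$, gives $\E\|g(x) - g(x^*)\|^2 \le \big(\tfrac{1}{n^2}\sum_{i=1}^n \tfrac{L_i^2}{p_i}\big)\|x - x^*\|^2$. Comparing with the right-hand side $\tfrac{\delta}{2}\|x-x^*\|^2$ of \ref{eq: ER Condition} yields $\delta = \tfrac{2}{n^2}\sum_{i=1}^n \tfrac{L_i^2}{p_i}$. Finally, the same enumeration at the point $x^*$ gives $\sigma_*^2 = \E\|g(x^*)\|^2 = \sum_{i=1}^n p_i\big\|\tfrac{1}{np_i}F_i(x^*)\big\|^2 = \tfrac{1}{n^2}\sum_{i=1}^n \tfrac{1}{p_i}\|F_i(x^*)\|^2$, which is the claimed expression.

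There is no genuinely hard step here; the argument is elementary once the estimator is identified explicitly. The only point requiring a little care is the use of unbiasedness to discard the term $-\|F(x)-F(x^*)\|^2$, so that \ref{eq: ER Condition} follows from a clean second-moment estimate rather than from expanding $\|(g(x)-g(x^*)) - (F(x)-F(x^*))\|^2$ term by term; the direct expansion also works but is more tedious and leads to the same value of $\delta$. One should also note in passing that $v = e_i/p_i$ is indeed a valid sampling vector ($\Exp_{\cD}[v_i] = \sum_j p_j (e_j/p_j)_i = 1$), as recalled in the text, so that the reformulation \eqref{Reformulation} and hence Lemma~\ref{Lemma: variance bound} apply.
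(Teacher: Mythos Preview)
Your proposal is correct and follows essentially the same approach as the paper: both use the bias--variance identity $\E\|X-\E X\|^2=\E\|X\|^2-\|\E X\|^2\le \E\|X\|^2$ with $X=g(x)-g(x^*)$ to reduce \ref{eq: ER Condition} to bounding $\E\|g(x)-g(x^*)\|^2$, then apply $L_i$-Lipschitzness componentwise, and compute $\sigma_*^2$ by the same enumeration at $x^*$. The only difference is presentational: the paper routes the second-moment computation through a general joint-probability expansion $\sum_{i,j}\tfrac{P_{ij}}{p_ip_j}\langle\cdot,\cdot\rangle$ (covering any sampling with constant off-diagonal ratio $c_2$) and then specializes to $c_2=0$, whereas you compute directly for single-element sampling; your argument is shorter for this specific proposition, while the paper's detour yields a formula that also applies to other samplings.
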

\textbf{Importance Sampling.} In importance sampling we aim to choose the probabilities $p_i$ that optimize the iteration complexity. \cite{gower2019sgd} and \cite{gorbunov2022stochastic} analyze importance sampling for \algname{SGD} and \algname{SEG} respectively. In this work, we provide the first convergence guarantees of \algname{SPEG} with importance sampling. In particular, we optimize the expected residual parameter $\delta$ with respect to $p_i$, which in turn affects the iteration complexity. Note that, by using Cauchy-Schwarz inequality~\eqref{eq: Cauchy Schwarz Inequality}, we have 
$\sum_{i = 1}^n \frac{L_i^2}{p_i} \geq \left(\sum_{i = 1}^n L_i \right)^2$,
and this lower bound can be achieved for $p_i^{\delta} = \nicefrac{L_i}{\sum_{j = 1}^n L_j}$. In case of importance sampling, we will use these probabilities $p_i^{\delta}$ which optimizes $\delta$ and define the corresponding $\delta$ as $\delta_{\text{IS}} \coloneqq \frac{2}{n^2}\left( \sum_{i = 1}^n L_i\right)^2$.
For uniform sampling \big(i.e. $p_i = \frac{1}{n}$\big), the value of the parameter is $\delta_{\text{US}} = \frac{2}{n} \sum_{i = 1}^n L_i^2$. Note that, $\delta_{\text{IS}}$ equals $\delta_{\text{US}}$ when all $L_i$ are equal, however $\delta_{\text{IS}}$ can be much smaller than $\delta_{\text{US}}$ when $L_i$ are very different from each other, e.g., when all $L_i$ are relatively small (close to zero) and one $L_i$ is large, $\delta_{\text{IS}}$ is almost $n$ times smaller than $\delta_{\text{US}}$. In this latter scenario (when $\delta_{\text{IS}}$ is much smaller than $\delta_{\text{US}}$), importance sampling could be useful and can significantly improve the performance of \algname{SPEG}. For example, note that the exponentially decaying term in \eqref{rate for Total number of iteration knowledge} decreases with $\delta$. Hence, this term will decrease much faster with importance sampling than with uniform sampling.
\vspace{-2mm}
\section{Numerical Experiments}
\label{Numerical Experiments}
To verify our theoretical results, we run several experiments on two classes of problems, i.e., strongly monotone problems (a special case of the quasi-strongly monotone VIPs) and weak MVI problems. The code to reproduce our results can be found at \href{https://github.com/isayantan/Single-Call-Stochastic-Extragradient-Methods}{https://github.com/isayantan/Single-Call-Stochastic-Extragradient-Methods}.
\subsection{Strongly Monotone Problems}\label{sec: experiments on quasi monotone}
Our experiments consider the quadratic strongly-convex strongly-concave min-max problem from \cite{gorbunov2022stochastic}. That is, we implement \algname{SPEG} on quadratic games of the form $\min_{x \in \mathbb{R}^d} \max_{y \in \mathbb{R}^d} \frac{1}{n}\sum_{i = 1}^n f_i(x,y)$ where 
\vspace{-1mm}
\begin{equation}
    f_i(x,y) \coloneqq \frac{1}{2} x^{\intercal}A_i x + x^{\intercal} B_i y - \frac{1}{2}y^{\intercal} C_i y+ a_i^{\intercal}x - c_i^{\intercal}y.
\end{equation}
Here $A_i, B_i, C_i$ are generated such that the quadratic game is strongly monotone and smooth. In all our experiments, we take $n = 100$ and $d = 30$. We generate positive semi-definite matrices $A_i, B_i, C_i$ such that their eigenvalues lie in the interval $[\mu_A, L_A], [\mu_B, L_B]$ and $[\mu_C, L_C]$ respectively. In all our experiments, we consider $L_A = L_B = L_C = 1$ and $\mu_A = \mu_C = 0.1, \mu_B = 0$ unless otherwise mentioned. The vectors $a_i$ and $c_i$ are generated from $\mathcal{N}_d (0,I_d)$. 
Here, the $i$th operator is given by
\vspace{-2mm}
\begin{equation*}\label{operator for quad game}
    \begin{split}
        F_i \begin{pmatrix} x\\
        y \end{pmatrix} = \begin{pmatrix} \nabla_x f_i(x, y) \\ - \nabla_y f_i(x, y) \end{pmatrix} =\begin{pmatrix} A_ix + B_i y + a_i \\
        C_i y - B_i^{\intercal}x + c_i
        \end{pmatrix}
    \end{split}
\end{equation*}
In Figures~\ref{fig: Synthetic Dataset 1}, \ref{fig:hsieh vs our steps}, and \ref{fig:us_vs_is}, 
we plot the relative error on the $y$-axis i.e. $\frac{\|x_k - x^*\|^2}{\|x_0 - x^*\|^2}$. 

\begin{wrapfigure}{r}{0.35\textwidth} 
    \centering
    \includegraphics[width=.35\textwidth]{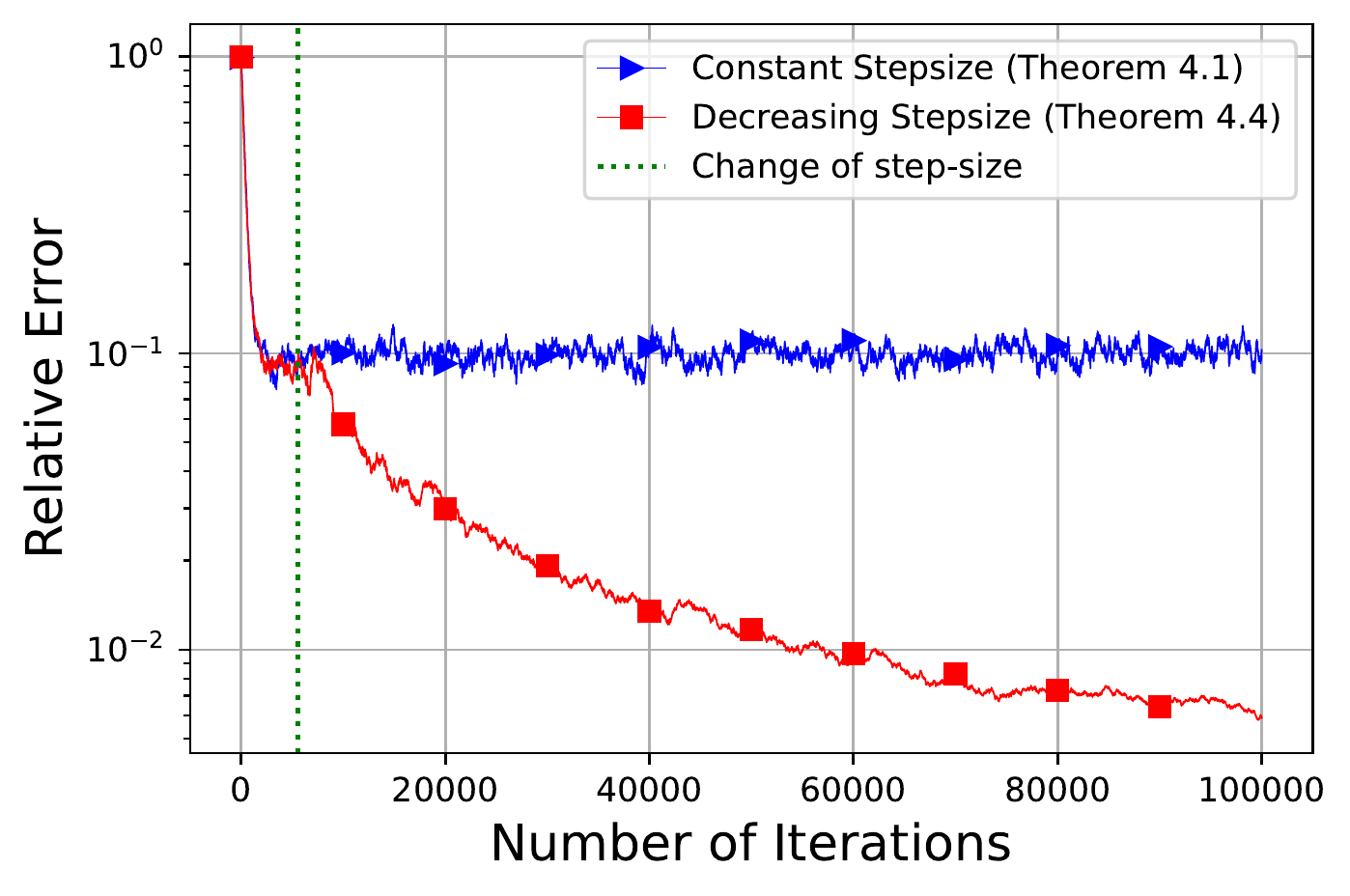}
    \caption{Constant vs Switching}\label{fig: Synthetic Dataset 1}
    \vspace{-2mm}
\end{wrapfigure}
\paragraph{Constant vs Switching Step-size Rule.}
In Fig.~\ref{fig: Synthetic Dataset 1}, we illustrate the step-size switching rule of Theorem \ref{SPEG switching rule}. We place the dotted line to mark when we switch from constant step-size to decreasing step-size. In Fig.~\ref{fig: Synthetic Dataset 1}, the trajectory of switching step-size rule \eqref{eq:stepsize_switching_1} matches that of constant step-size \eqref{eq:constant_stepsize} in the first phase \big(where \algname{SPEG} runs with constant step-size following \eqref{eq:stepsize_switching_1}\big). However, it becomes stagnant when the constant step-size \algname{SPEG} reaches a neighbourhood of optimality. In contrast, the step-size of Theorem \ref{SPEG switching rule} helps the method to converge to better accuracy.

\paragraph{Comparison to \citet{hsieh2019convergence}.}
In this experiment, we compare \algname{SPEG} step-sizes proposed in Theorems \ref{Theorem: constant stepsize theorem} and \ref{SPEG switching rule} with step-sizes from \citep{hsieh2019convergence}.
To implement \algname{SPEG} with the step-sizes from \cite{hsieh2019convergence}, we choose $\gamma$ and $b$ such that $\frac{1}{\mu} < \gamma \leq \frac{b}{4L}$ and set $\om_k = \gamma_k = \frac{\gamma}{k + b}$. For Fig.~\ref{fig: Interpolated Model}, we generate $A_i, B_i, C_i$ as before. First, we sample optimal points $x^*, y^*$ from $\mathcal{N}_d(0, I_d)$ and then generate $a_i, c_i$ such that $F(x^*, y^*) = 0$.   
\begin{equation*}
    \begin{split}
        \begin{pmatrix} a_i \\c_i \end{pmatrix} = \begin{pmatrix} A_i & B_i \\ -B_i^{\intercal} & C_i \end{pmatrix}^{-1} \begin{pmatrix} x^* \\y^* \end{pmatrix}.
    \end{split}
\end{equation*}
In Fig.~\ref{fig: Interpolated Model}, we run the algorithms on interpolated model $\big( F_i(x^*) = 0$ for all $i \in [n]\big)$. Since the model is interpolated, we have $\sigma_*^2 = 0$ in Theorem \ref{Theorem: constant stepsize theorem} and linear convergence to the exact optimum asymptotically. In this setting, as shown in Fig.~\ref{fig: Interpolated Model}, our proposed step-size results in major improvement compared to the decreasing step-size selection analyzed in~\cite{hsieh2019convergence}. In Fig.~\ref{fig: Non-Interpolated Model}, we compare the switching step-size rule with step-size from \cite{hsieh2019convergence}. In Fig.~\ref{fig: Non-Interpolated Model}, we generate $a_i, c_i$ from the normal distribution. In this plot, we manually switch the step-size from constant to decreasing after $305$ steps. We observe that such a semi-empirical rule has comparable performance to the step-size selection of \citet{hsieh2019convergence}. 

\begin{figure}[t]
\centering
\begin{subfigure}[b]{.4\textwidth}
    \centering
    \includegraphics[width=\textwidth]{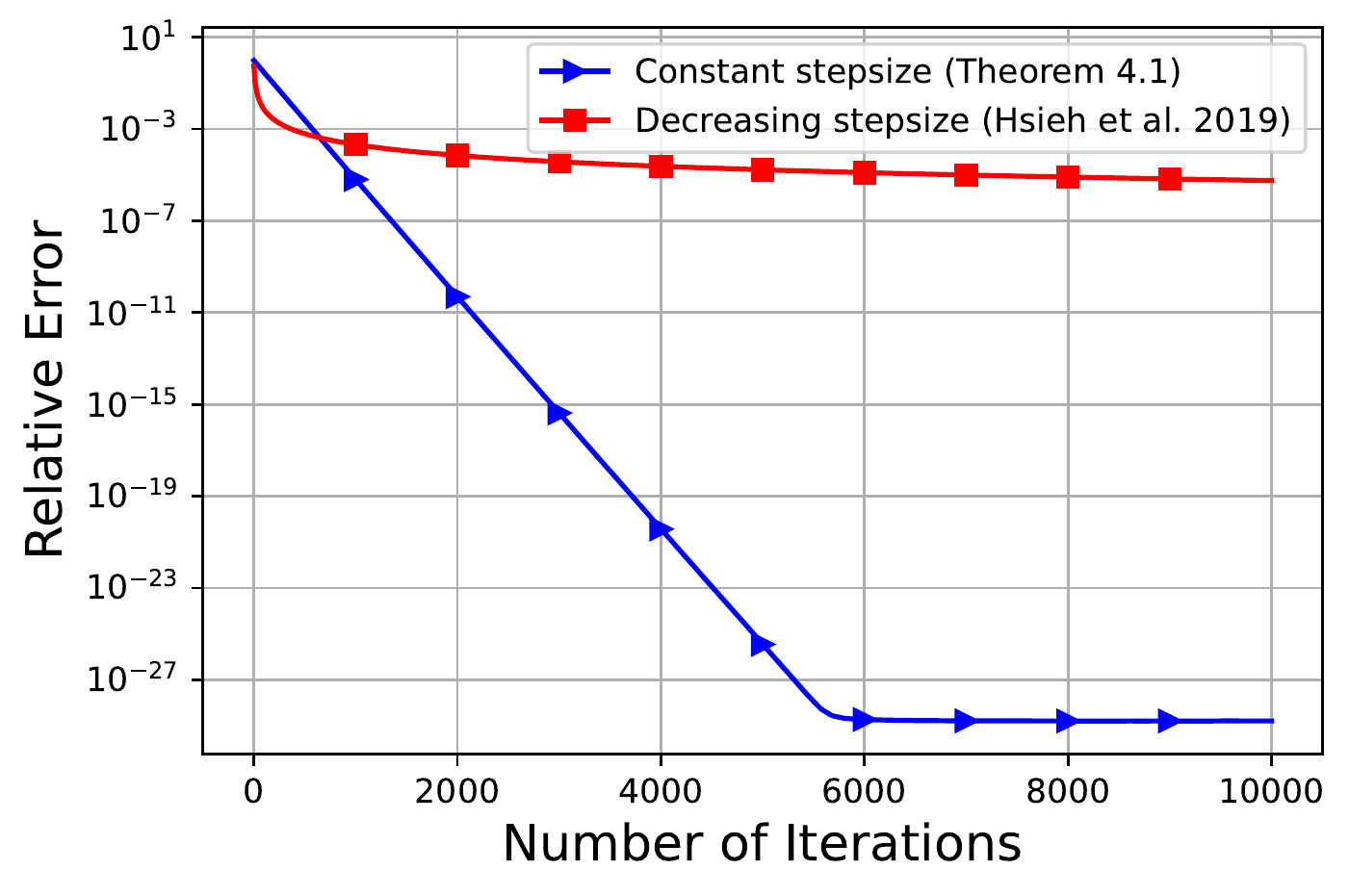}
    \caption{Interpolated Model}\label{fig: Interpolated Model}
\end{subfigure}
\begin{subfigure}[b]{0.4\textwidth}
    \centering
    \includegraphics[width=\textwidth]{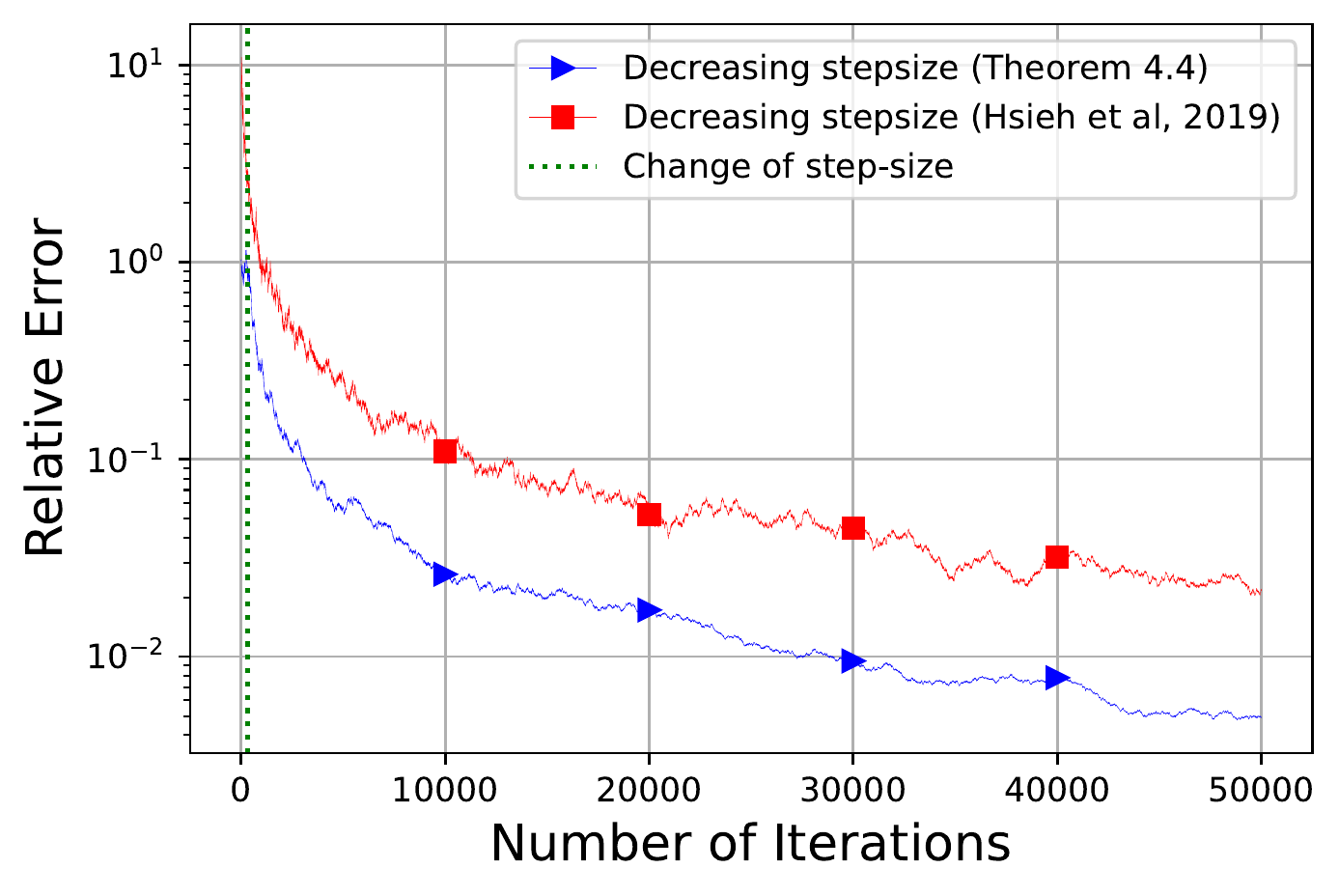}
    \caption{Non-Interpolated Model}\label{fig: Non-Interpolated Model}
\end{subfigure}
\caption{\emph{Comparison of our \algname{SPEG} using our step-size against decreasing step-size of \citet{hsieh2019convergence}. In plot (a), for constant step-size of \algname{SPEG} we use the upper bound of \eqref{eq:constant_stepsize}. In plot (b), we run our switching step-size \algname{SPEG}~\eqref{eq:stepsize_switching_1}.}}
\label{fig:hsieh vs our steps}
\end{figure}

\begin{figure}[t]
\centering
\begin{subfigure}[b]{0.24\linewidth}
    \centering
    \includegraphics[width=\textwidth]{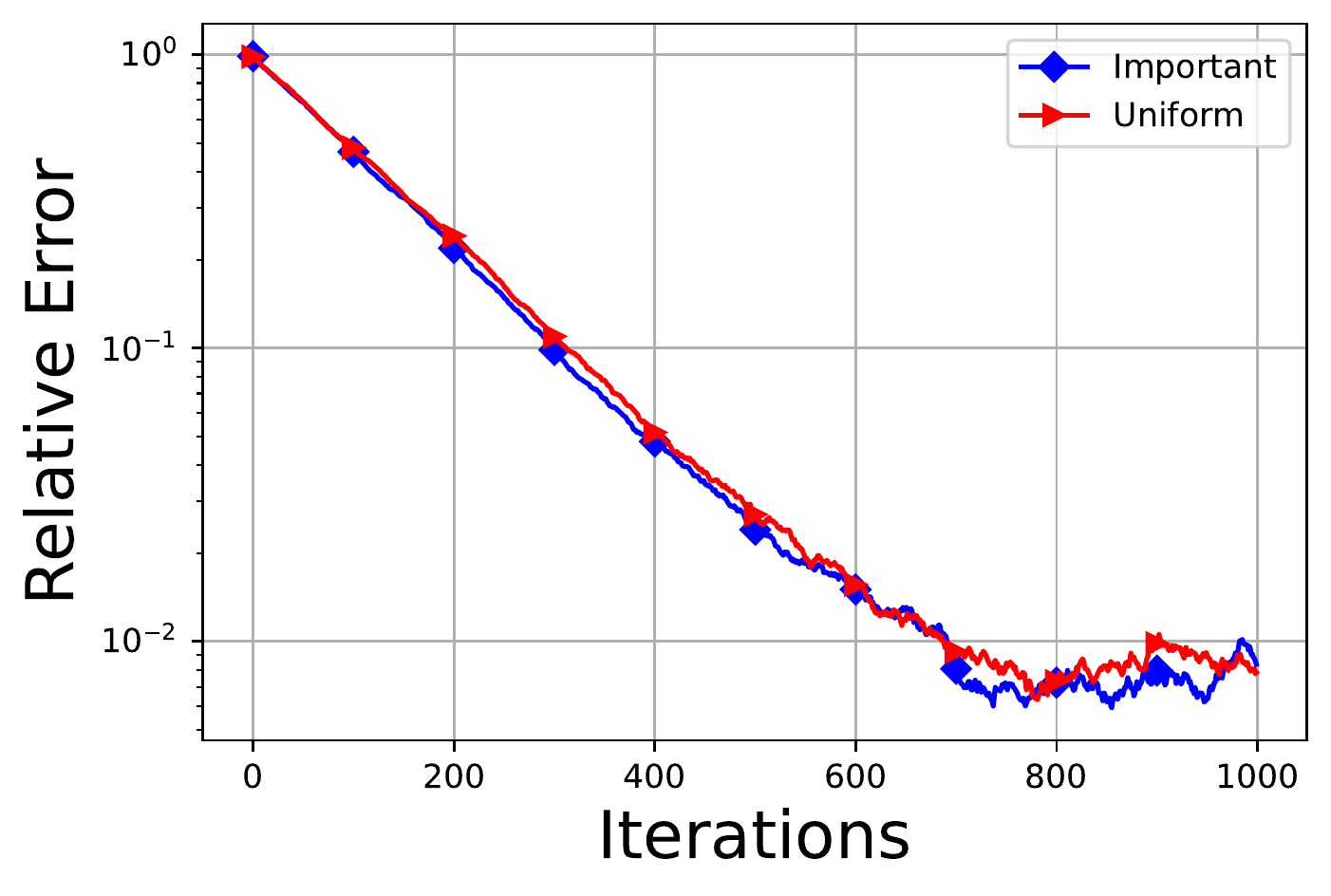}
    \caption{$\Lambda = 2 $}
\end{subfigure}
 \begin{subfigure}[b]{0.24\linewidth}
      \centering
      \includegraphics[width=\textwidth]{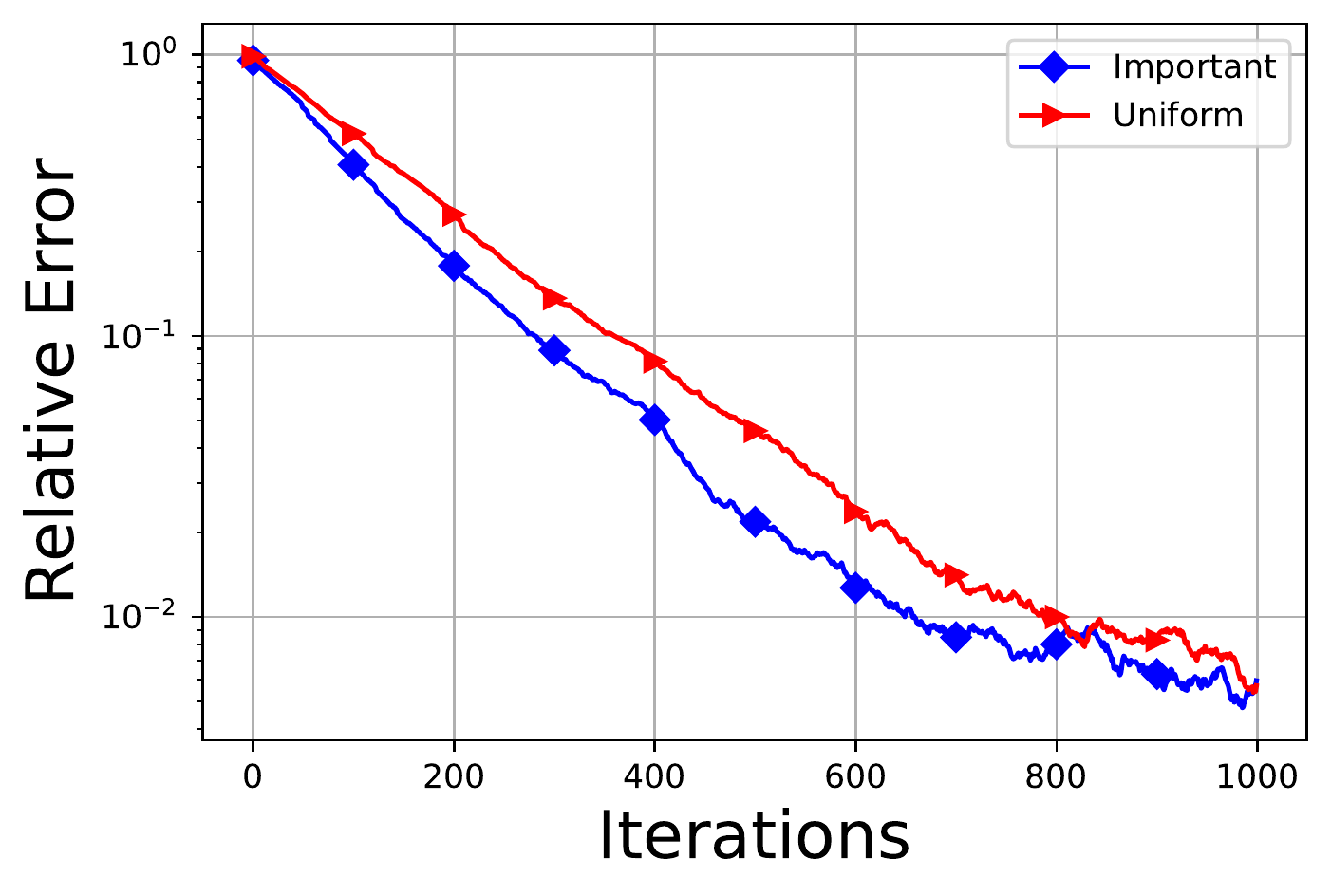}
     \caption{$\Lambda = 5 $}
  \end{subfigure}
\begin{subfigure}[b]{0.24\textwidth}
    \centering
    \includegraphics[width=\textwidth]{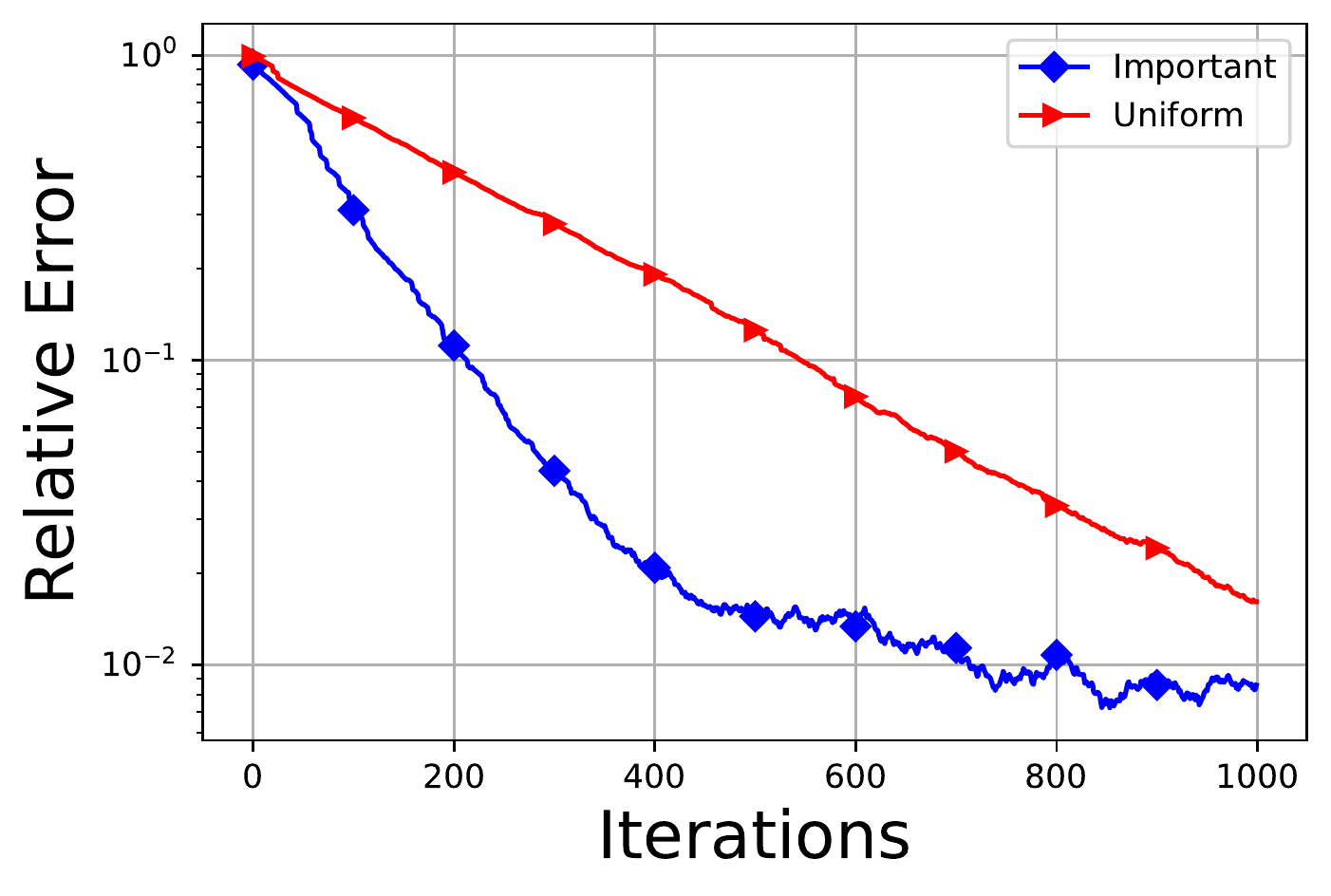}
    \caption{$\Lambda = 10$}
\end{subfigure}
\begin{subfigure}[b]{0.24\textwidth}
    \centering
    \includegraphics[width=\textwidth]{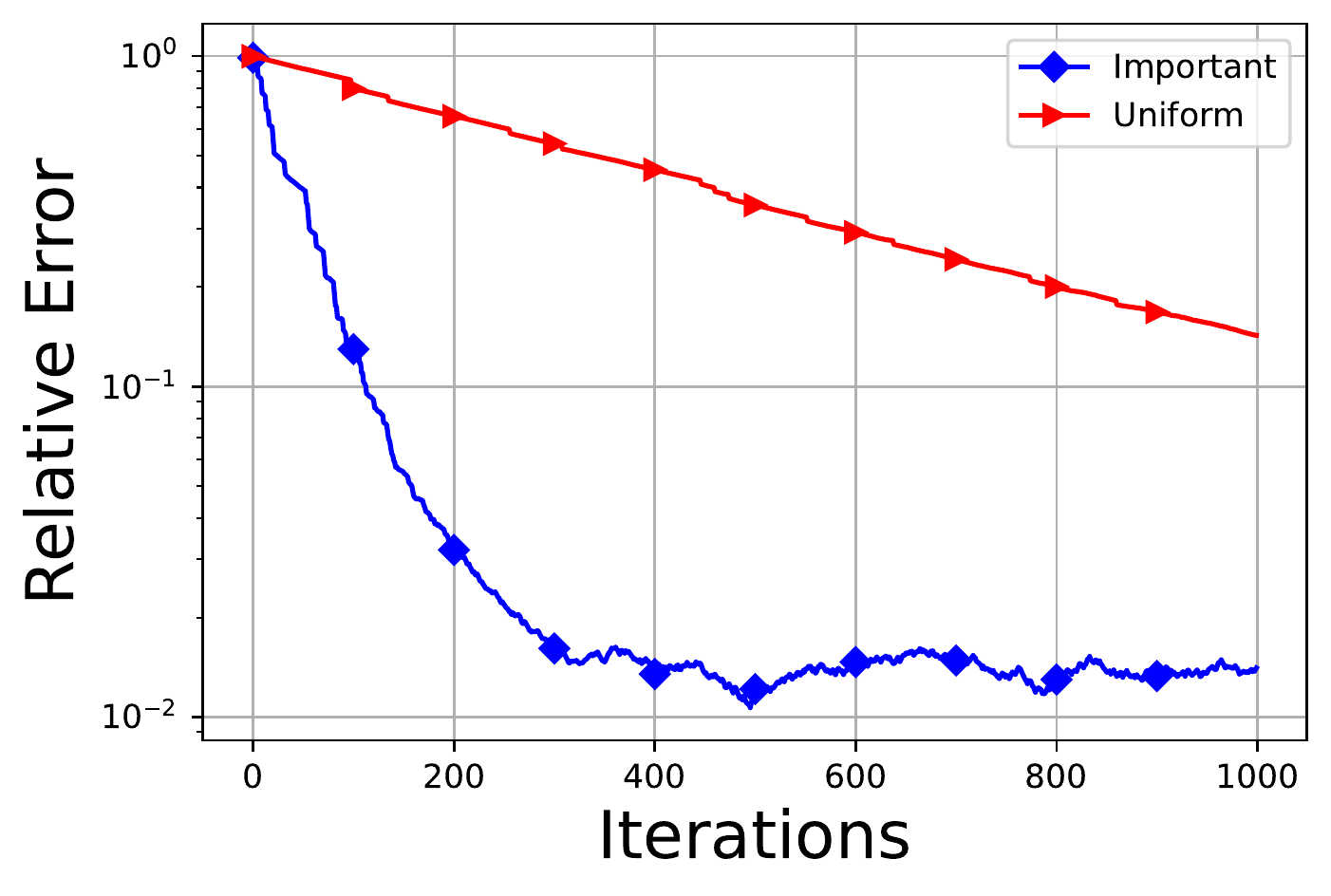}
    \caption{$\Lambda = 20$}
\end{subfigure}  
\caption{\emph{Comparison of \algname{SPEG} with Uniform and Importance Sampling for different $\Lambda \in\{2,5,10,20\}$, where the eigenvalues of matrices $A_1, C_1$ are uniformly generated from the interval $[0.1, \Lambda].$}}
\label{fig:us_vs_is}
\end{figure}

\paragraph{Uniform vs. Importance Sampling.}  In this experiment, we highlight the advantage of using importance sampling over uniform sampling. The eigenvalues of $A_1, C_1$ are uniformly generated from the interval $[0.1, \Lambda]$ while the rest of the matrices are generated as mentioned before. We vary the value of $\Lambda \in \{2,5,10,20\}$ and run and compare \algname{SPEG} with both uniform and importance sampling (see~Fig.~\ref{fig:us_vs_is}).  For importance sampling, we use the probabilities $p_i = \nicefrac{L_i}{\sum_{j = 1}^n L_j}$. In Fig.~\ref{fig:us_vs_is}, it is clear that as the value of $\Lambda$ increases, the trajectories under uniform sampling get worse, while the trajectory under importance sampling remains almost identical. This behaviour aligns well with our discussion in Section~\ref{section: Arbitrary Sampling}.
\subsection{Weak Minty Variational Inequality Problems}\label{sec: Experiment on WMVI}
\begin{wrapfigure}{r}{0.35\textwidth} 
    \centering
    \includegraphics[width=0.35\textwidth]{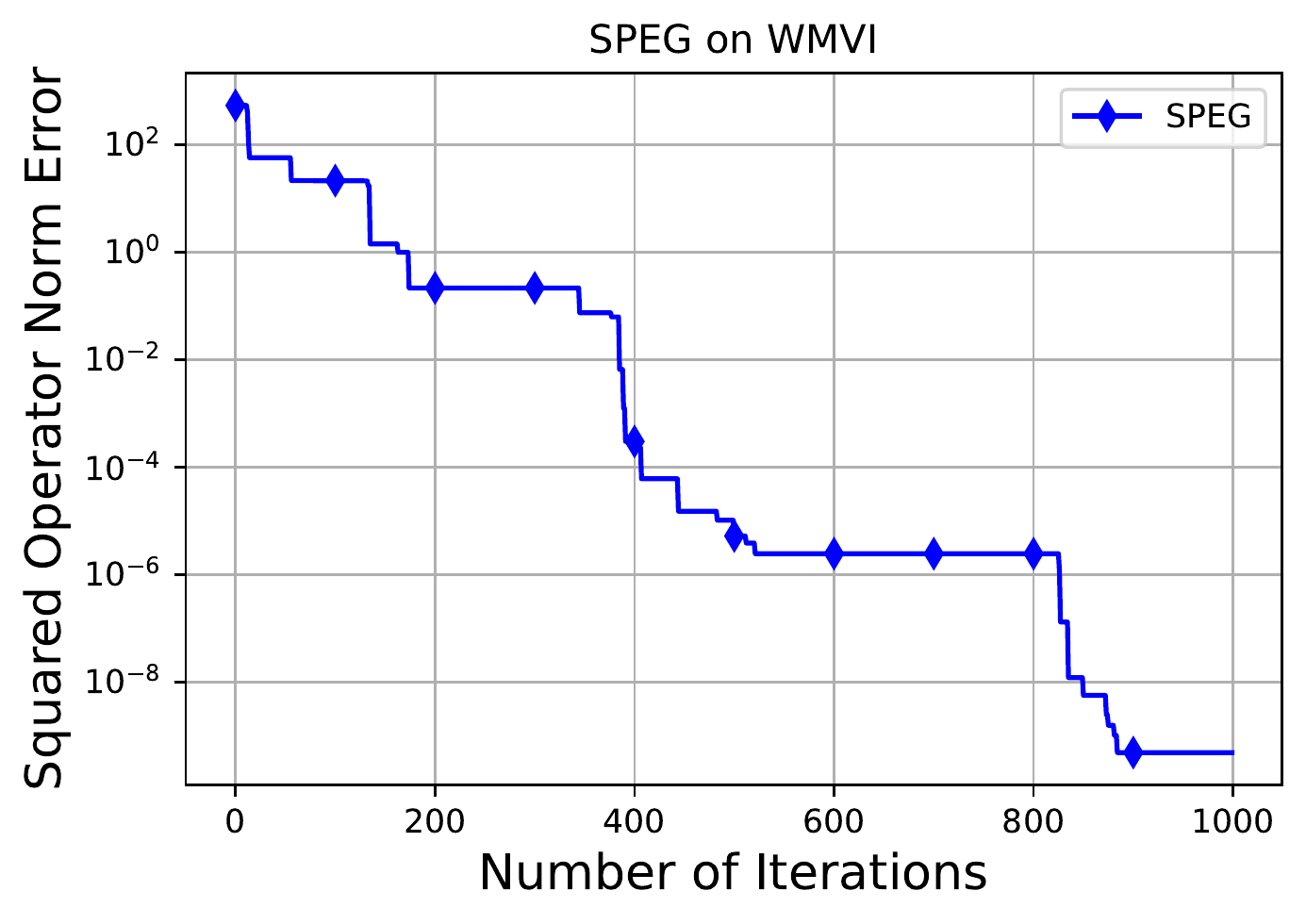}
    \caption{Trajectory of \algname{SPEG} for solving weak MVI.  "Squared Operator Norm Error'' in vertical axis denotes the $\min\limits_{0\leq k \leq K-1}\E\left[\|F(\hat x_k)\|^2\right]$ of Theorem~\ref{cor:weak_MVI_convergence}.}
    \label{fig:performance of SPEG on WMVI_a}
    \vspace{-5mm}
\end{wrapfigure}
This experiment verifies the convergence guarantees of \algname{SPEG} in Theorem \ref{cor:weak_MVI_convergence}. Following the min-max problem mentioned in \cite{bohm2022solving}, we consider the objective function 
\begin{equation}\label{asoxasl}
\begin{split}
       \min_{x \in \mathbb{R}} \max_{y \in \mathbb{R}} \frac{1}{n} \sum_{i = 1}^n \xi_{i} xy + \frac{\zeta_i}{2}(x^2 - y^2). 
\end{split}
\end{equation}
In this experiment, we generate $\xi_i, \zeta_i$ such that $L = 8$ and $\rho = \nicefrac{1}{32}$ for the above min-max problem \citep{bohm2022solving}. We implement \algname{SPEG} with extrapolation step $\gamma_k = 0.08$ and update step $\omega_k = 0.01$ which satisfies the conditions on step-size in Theorem \ref{cor:weak_MVI_convergence}. In Fig.~\ref{fig:performance of SPEG on WMVI_a}, we use a batchsize of $6$. This plot illustrates that for some weak MVI problems the requirement on the step-size from Theorem~\ref{cor:weak_MVI_convergence} can be too pessimistic and \algname{SPEG} with relatively small batchsize achieves reasonable accuracy of the solution. The choice of batchsize ensures that bound \eqref{eq:SPEG_weak_MVI_batchsize} holds and $\delta$ is small enough to guarantee convergence of \algname{SPEG}. We also tried to compare \algname{SPEG} with \algname{SEG+} from \cite{pethick2022escaping}, however, the authors do not mention their choice of update step-size. We examined several decreasing update step-size for which \algname{SEG+} failed to converge. Further details on experiments can be found in Appendix \ref{subsec:FurtherDetails}.

\section*{Acknowledgement}
Sayantan Choudhury acknowledges support from the Acheson J. Duncan Fund for the Advancement of Research in Statistics. Nicolas Loizou acknowledges support from CISCO Research. 
\bibliography{sample}

\appendix
\newpage
\part*{Supplementary Material}

We organize the Supplementary Material as follows: Section~\ref{section: Related Work} discusses the existing literature related to our work. In Section~\ref{section: tachnical preliminaries}, we present some technical lemmas required for our analysis, while in Section~\ref{sec:BoundedVarianceCounterExample}, we provide a simple problem where the bounded variance assumption does not hold. Then, in Section~\ref{section: proofs of results on expected residual}, we provide the proofs of propositions related to Expected Residual. Next, Section~\ref{section: main convergence analysis results} presents the proofs of the main theorems, while a proposition related to arbitrary sampling is proved in Section~\ref{section: further results on arbitrary sampling}. Finally, additional numerical experiments are presented in Section~\ref{Appendix_AddExperiments}.

{\small\tableofcontents}

\newpage
\section{Further Related Work}\label{section: Related Work}

The references necessary to motivate our work and connect it to the most relevant literature are included in the appropriate sections of the main body of the paper. In this section, we present a broader view of the
literature, including more details on closely related work and more references to papers that are not directly related to our main results. 

\begin{itemize}[leftmargin=*]
\setlength{\itemsep}{0pt}
\item \textbf{Classes of Structured Non-monotone Operators.} 
With an increasing interest in improved computational speed, first-order methods are the primary choice for solving VIPs. However, computation of an approximate first-order locally optimal solution of a general non-monotone VIP is intractable \citep{daskalakis2021complexity,lee2021fast}. It motivates us to exploit the additional structures prevalent in large classes of non-monotone VIPs. Recently \cite{gorbunov2022stochastic, hsieh2019convergence} provide convergence guarantees of stochastic methods for solving quasi-strongly monotone VIPs, while \cite{hsieh2020explore} for problems satisfying error-bound conditions. \cite{diakonikolas2021efficient} defined the notion of a weak MVI~\eqref{eq: weak MVI} covering classes of non-monotone VIPs. 

\vspace{2mm}

\item \textbf{Assumptions on Operator Noise.} The standard analysis of stochastic methods for solving VIPs relies on bounded variance assumption. \cite{bohm2022solving, diakonikolas2021efficient, hsieh2019convergence, gidel2018variational} use bounded variance assumption (i.e. $\E \|F_i(x) - F(x)\|^2 \leq \sigma^2$ for all $x$) while \cite{nemirovski2009robust, abernethy2021last} assume bounded operators for their analysis. However, there are examples of simple quadratic games that do not satisfy these conditions. It has motivated researchers to look for alternative/relaxed assumptions on distributions. \cite{loizou2021stochastic} provides convergence of Stochastic Gradient Descent Ascent Method under Expected Cocoercivity. \cite{hsieh2020explore, mishchenko2020revisiting} considered alternative assumptions for analyzing Stochastic Extragradient Methods that do not imply boundedness of the variance. However, there is no analysis of single-call extragradient methods without bounded variance assumption.

\vspace{2mm}

\item \textbf{Weak Minty Variational Inequalities.}  Numerous contemporary studies look to identify first-order methods for efficiently solving min-max optimization problems. It varies from simple convex-concave to nontrivial nonconvex nonconcave objectives. Though there has been a significant development in the convex-concave setting, \cite{daskalakis2021complexity} demonstrates that even finding local solutions are intractable for general nonconvex nonconcave objectives. Therefore, researchers seek to identify the structure of objective functions for which it is possible to resolve the intractability issues. \cite{diakonikolas2021efficient} proposes the notion of non-monotonicity, which generalizes the existence of a Minty solution (i.e., $\rho = 0 $ in \eqref{eq: weak MVI}). This problem is known as weak Minty variational inequality in the literature. \cite{diakonikolas2021efficient, pethick2022escaping} provides convergence guarantees of the Extragradient Method for weak Minty variational inequality. They establish a convergence rate of $\cO(1/k)$ for the squared operator norm. \cite{lee2021fast} shows that it is possible to have an accelerated extragradient method even for non-monotone problems. Furthermore, \cite{bohm2022solving} provides a convergence guarantee for the \algname{SOG} with a complexity bound of $\cO(\varepsilon^{-2})$. However, all papers exploring stochastic extragradient methods for solving weak Minty variational inequality consider bounded variance assumption~\citep{bohm2022solving, diakonikolas2021efficient}. Moreover, all algorithms solving Weak Minty variational inequality require increasing batchsize. Recently, \cite{pethick2023solving} introduced \algname{BCSEG+} which can solve weak minty variational inequality without increasing batchsize. \algname{BCSEG+} involves three oracle calls per iteration and addition of a bias-corrected term in the extrapolation step.

\vspace{2mm}

\item \textbf{Arbitrary Sampling Paradigm.} As we mentioned in the main paper, the stochastic reformulation \eqref{Reformulation} of the original problem \eqref{eq: Variational Inequality Definition} allows us to analyze single-call extragradient methods under the arbitrary sampling paradigm. That is, provide  a unified analysis for \algname{SPEG} that captures multiple sampling strategies, including $\tau$-minibatch and importance samplings. An arbitrary sampling analysis of a stochastic optinmization method was first proposed in the context of the randomized coordinate descent method for solving strongly convex functions in \cite{richtarik2013optimal}. Since then, several other stochastic methods were studied in this regime, including accelerated coordinate descent algorithms \citep{qu2016coordinate, hanzely2019accelerated}, randomized iterative methods for solving consistent linear systems~\citep{richtarik2020stochastic, loizou2020momentum,loizou2020convergence}, randomized gossip algorithms \citep{loizou2016new, loizou2021revisiting}, stochastic gradient descent (\algname{SGD})~\citep{gower2019sgd,gower2021sgd}, and variance reduced methods~\citep{qian2019saga, horvath2019nonconvex, khaled2020unified}. The first analysis of stochastic algorithms under the arbitrary sampling paradigm for solving variational inequality problems was proposed in ~\cite{loizou2020stochastic,loizou2021stochastic}.  In \cite{loizou2020stochastic,loizou2021stochastic}, the
authors focus on algorithms like the stochastic Hamiltonian method, the stochastic gradient descent ascent,
and the stochastic consensus optimization. These ideas were later extended to the case of Stochastic Extragradient by \cite{gorbunov2022stochastic}. To the best of our knowledge, our work is the first that provides an analysis of single-call extragradient methods under the arbitrary sampling paradigm.

\vspace{2mm}

\item \textbf{Overparameterized Models and Interpolation.} For a function $f(x) \coloneqq \frac{1}{n} \sum_{i = 1}^n f_i(x)$ we say that interpolation condition holds if there exists $x^*$ such that $\min_{x} f_i(x) = f_i(x^*)$ for all $i \in [n]$ (or equivalently $\nabla f_i(x^*) = 0$ for smooth convex functions)~\citep{gower2021sgd}. The interpolation condition is satisfied when the underlying models are sufficiently overparameterized ~\citep{vaswani2019fast}. Some known examples include deep matrix factorization and classification using neural networks \citep{arora2019implicit,rolinek2018l4, vaswani2019fast}. The interpolated model structure enables \algname{SGD} and other optimization algorithms to have faster convergence \citep{gower2021sgd,pmlr-v130-loizou21a, d2021stochastic}. 
Inspired by this, one can extend the notion of the interpolation condition to operators. In this scenario, we say that the VIP \eqref{eq: Variational Inequality Definition} is interpolated if there exists solution $x^*$ of \eqref{eq: Variational Inequality Definition} such that $F_i(x^*) = 0$ for all $i \in [n]$. This concept has been explored for analyzing the stochastic extragradient method in \cite{vaswani2019painless, li2022convergence}. We highlight that our proposed theorems show fast convergence of \algname{SPEG} in this interpolated regime (when $\sigma_*^2=0$). To the best of our knowledge, our work is the first that proves such convergence for \algname{SPEG}. In Fig.~\ref{fig: Interpolated Model}, we experimentally verify the fast convergence for solving a strongly monotone interpolated problem.

\vspace{2mm}

\item \textbf{Deterministic Extragradient Methods.} The Extragradient method (\algname{EG})~\citep{korpelevich1976extragradient} and its single-call variant, Optimistic Gradient (\algname{OG}) \citep{popov1980modification}, were proposed to overcome the convergence issues of gradient descent-ascent method for solving monotone problems. Since their introduction, these methods have been revisited and explored in various ways. \cite{mokhtari2020unified} analyzed \algname{EG} and \algname{OG} as an approximation of the Proximal Point method to solve bilinear and strongly convex-strongly concave min-max problems. \cite{solodov1999hybrid} and \cite{ryu2019ode} provide the best-iterate convergence guarantees of \algname{EG} and \algname{OG} with a rate of $\cO(\nicefrac{1}{K})$ for solving monotone problems. However, providing a last-iterate convergence rate of \algname{EG} and \algname{OG} for monotone VIPs has been a long-lasting open problem that was only recently resolved. The works of \cite{golowich2020tight, gorbunov2022extragradient, cai2022tight} prove a last-iterate $\cO(\nicefrac{1}{K})$ convergence rate for these methods. Finally, in the deterministic setting, some recent works provide convergence analysis of \algname{EG} and \algname{OG} for solving weak MVI~\eqref{eq: weak MVI} \citep{diakonikolas2021efficient, pethick2022escaping, bohm2022solving, gorbunov2022convergence}.

\end{itemize}

\newpage
\section{Technical Preliminaries}\label{section: tachnical preliminaries}
Throughout our work, we assume 
\begin{assumption}
Operator $F$ in \eqref{eq: Variational Inequality Definition} is L Lipschitz, i.e.,  $\forall x,y \in \mathbb{R}^d$ operator $F$ satisfies
\begin{equation}\label{eq: F lipschitz}
    \|F(x) - F(y)\| \leq L \|x - y\|.
\end{equation}
Operators $F_i: \mathbb{R}^d \to \mathbb{R}^d $  of problem \eqref{eq: Variational Inequality Definition} are $L_i$- Lipschitz, i.e., $\forall x,y \in \mathbb{R}^d$ operator $F_i$ satisfies
\begin{equation}\label{eq: F_i lipschitz}
    \|F_i(x) - F_i(y)\| \leq L_i \|x - y\|.
\end{equation}
\end{assumption}
In our proofs, we often use the following simple inequalities.

\begin{lemma}\label{Lemma: Young's Inequality} For all $a, b, a_1, a_2, \cdots a_n \in \mathbb{R}^d, n \geq 1, \alpha > 0$, we have the following inequalities:

\begin{eqnarray}
     \la a, b \ra &\leq& \|a\| \|b\|, \label{eq: Cauchy Schwarz Inequality} \\
     \la a,b \ra &\leq& \frac{1}{2\alpha}\|a\|^2 + \frac{\alpha}{2}\|b\|^2,\\
    \|a+b\|^2 &\leq& 2\|a\|^2 + 2\|b\|^2,  \label{eq: Young's Inequality}\\
    \|a\|^2 &\geq& \frac{1}{2}\|a+b\|^2 - \|b\|^2, \\
    \bigg\|\sum_{i = 1}^n a_i \bigg\|^2 &\leq& n \sum_{i = 1}^n \|a_i\|^2. \label{eq: n dimensional young's inequality}    
\end{eqnarray}

\end{lemma}
Inequality~\eqref{eq: Young's Inequality} is well known as Young's Inequality. Now, we present a simple property of unbiased estimators.

\vspace{2mm}

\begin{lemma}\label{Lemma: variance of an unbiased estimator}
For an unbiased estimator $g$ of operator $F$ i.e. $\E[g(x)] = F(x)$ we have 
\begin{equation}\label{eq: variance of an unbiased estimator}
    \E \|g(x) - F(x)\|^2 = \E \|g(x)\|^2 - \|F(x)\|^2.
\end{equation}
\end{lemma}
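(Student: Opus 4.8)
The plan is to expand the squared norm and use linearity of expectation together with the unbiasedness hypothesis $\E[g(x)] = F(x)$. This is the standard bias--variance decomposition, so I expect no genuine obstacle; the only thing to be careful about is treating $F(x)$ as deterministic (non-random) when pulling it out of the expectation, which is legitimate since $F(x) = \E[g(x)]$ is a fixed vector for each fixed $x$.

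First I would write, for each fixed $x$,
\begin{equation}
\|g(x) - F(x)\|^2 = \|g(x)\|^2 - 2\la g(x), F(x)\ra + \|F(x)\|^2.
\end{equation}
Next I would take expectations of both sides and use linearity of $\E$ to get
\begin{equation}
\E\|g(x) - F(x)\|^2 = \E\|g(x)\|^2 - 2\E\la g(x), F(x)\ra + \|F(x)\|^2,
\end{equation}
where $\|F(x)\|^2$ comes out of the expectation untouched because it does not depend on the randomness in $g$. Then, using that the inner product is linear in its first argument and that $F(x)$ is deterministic, I would compute $\E\la g(x), F(x)\ra = \la \E[g(x)], F(x)\ra = \la F(x), F(x)\ra = \|F(x)\|^2$, invoking the hypothesis $\E[g(x)] = F(x)$.

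Finally, substituting this back gives $\E\|g(x) - F(x)\|^2 = \E\|g(x)\|^2 - 2\|F(x)\|^2 + \|F(x)\|^2 = \E\|g(x)\|^2 - \|F(x)\|^2$, which is the claimed identity \eqref{eq: variance of an unbiased estimator}. The whole argument is three lines and the ``hard part,'' such as it is, is merely bookkeeping that $F(x)$ plays the role of a constant under $\E$.
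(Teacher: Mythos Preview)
Your proof is correct; it is exactly the standard bias--variance identity obtained by expanding the square, taking expectations, and using $\E\la g(x), F(x)\ra = \la \E[g(x)], F(x)\ra = \|F(x)\|^2$. The paper does not actually spell out a proof of this lemma (it is stated as a simple property of unbiased estimators and left without argument), so there is nothing to compare against beyond noting that your derivation is the canonical one.
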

Next, we present the following lemma from \cite{stich2019unified}, which plays a vital role in proving the convergence guarantee of Theorem \ref{Theorem: Total number of iteration knowledge}.
\begin{lemma}(Simplified Verison of Lemma 3 from \cite{stich2019unified})\label{Lemma: Stich lemma}
Let the non-negative sequence $\{r_k\}_{k \geq 0}$ satisfy the relation $r_{k+1} \leq (1 - a \gamma_k)r_k + c \gamma_k^2$ for all $k \geq 0$, parameters $a,c \geq 0$ and any non-negative sequence $\{\g_k\}_{k \geq 0}$ such that $\gamma_k \leq \frac{1}{h}$ for some $h \geq a, h >0$. Then for any $K \geq 0$ one can choose $\{\gamma_k\}_{k\geq 0}$ as follows:
\begin{equation*}
    \begin{split}
        \text{if $K \leq \frac{h}{a}$}, & \qquad \gamma_k = \frac{1}{h},\\
        \text{if $K > \frac{h}{a}$ and $k < k_0$}, & \qquad \gamma_k = \frac{1}{h},\\
        \text{if $K > \frac{h}{a}$ and $k \geq k_0$}, & \qquad \gamma_k = \frac{2}{a(\kappa + k - k_0)},
    \end{split}
\end{equation*}
where $\kappa = \frac{2h}{a}$ and $k_0 = \ceil*{\frac{K}{2}}$. For this choice of $\g_k$ the following inequality holds:
$$
r_K \leq \frac{32hr_0}{a} \exp{\Bigg(-\frac{aK}{2h}\Bigg)} + \frac{36c}{a^2K}.
$$
\end{lemma}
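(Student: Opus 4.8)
The plan is to follow the original argument of Stich and split the analysis into the two regimes dictated by the step-size schedule, treating each by a suitable unrolling of the master recursion $r_{k+1} \leq (1-a\gamma_k)r_k + c\gamma_k^2$. Throughout one may assume $a>0$, since otherwise the asserted bound is vacuous; then $\kappa = 2h/a \geq 2$ because $h \geq a$, and $1 - a\gamma_k \geq 0$ always, since $\gamma_k \leq 1/h \leq 1/a$. These two observations (nonnegativity of the contraction factor, and $\kappa \geq 2$) are what make every manipulation below legitimate.

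First I would handle the easy regime $K \leq h/a$, where $\gamma_k = 1/h$ for every $k$, so that the recursion reads $r_{k+1} \leq (1 - a/h)r_k + c/h^2$. Unrolling and bounding the geometric sum $\sum_{j=0}^{K-1}(1-a/h)^j \leq h/a$ gives $r_K \leq (1-a/h)^K r_0 + c/(ah)$. Then $(1-a/h)^K \leq e^{-aK/h} \leq e^{-aK/(2h)}$, and since $1 \leq 32h/a$ the first term is at most $\frac{32h}{a}e^{-aK/(2h)}r_0$; and since $aK \leq h$ in this regime, $c/(ah) \leq 36c/(a^2K)$. This disposes of the first case with room to spare.

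For the regime $K > h/a$ I would split the iterations at $k_0 = \lceil K/2\rceil$. For $k < k_0$ the step-size is again the constant $1/h$, so exactly as above $r_{k_0} \leq (1-a/h)^{k_0}r_0 + c/(ah)$, and $k_0 \geq K/2$ makes the exponential factor at most $e^{-aK/(2h)}$. For the decreasing phase, put $t = k-k_0$ and $s_t = r_{k_0+t}$; then $\gamma_{k_0+t} = \frac{2}{a(\kappa+t)} \leq 1/h$ and the recursion becomes $s_{t+1} \leq \bigl(1 - \frac{2}{\kappa+t}\bigr)s_t + \frac{4c}{a^2(\kappa+t)^2}$. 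The key step is to multiply through by the quadratic weight $(\kappa+t)(\kappa+t-1)$: because $1 - \frac{2}{\kappa+t} = \frac{\kappa+t-2}{\kappa+t}$ and $\frac{\kappa+t-1}{\kappa+t} \leq 1$ on the noise term, this collapses to the telescoping inequality $\zeta_{t+1} \leq \zeta_t + 4c/a^2$ for $\zeta_t := (\kappa+t-1)(\kappa+t-2)s_t$ (nonnegative since $\kappa \geq 2$). Summing from $0$ to $T-1$ with $T = K - k_0 \geq 1$ yields $r_K = s_T \leq \frac{(\kappa-1)(\kappa-2)}{(\kappa+T-1)(\kappa+T-2)}\,r_{k_0} + \frac{4cT}{a^2(\kappa+T-1)(\kappa+T-2)}$. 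Inserting the phase-one bound on $r_{k_0}$, the $(1-a/h)^{k_0}r_0$ contribution survives with a prefactor $\leq 1$, giving a term $\leq e^{-aK/(2h)}r_0 \leq \frac{32h}{a}e^{-aK/(2h)}r_0$; for the two leftover $c/a^2$ terms one uses $(\kappa+T-1)(\kappa+T-2)\geq \max\{\kappa T,\, T^2\}$, the identity $c/(ah) = 2c/(\kappa a^2)$, and $T \geq (K-1)/2 \geq K/4$ for $K\geq 2$, after which a short arithmetic check bounds their sum by $36c/(a^2K)$.

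I expect the only genuinely fiddly parts to be (i) selecting precisely the quadratic weight $(\kappa+t)(\kappa+t-1)$ that makes the recursion telescope exactly — a naive choice such as $(\kappa+t)^2$ leaves a non-telescoping remainder and forces a messy weighted-average argument instead — and (ii) propagating the universal constants through the ceiling $k_0 = \lceil K/2\rceil$ so that the same $32$ and $36$ serve in both regimes. Point (ii) is the more tedious, but it is pure bookkeeping with no conceptual content; everything else is a direct consequence of the setup assumptions $h \geq a$ and $\gamma_k \leq 1/h$.
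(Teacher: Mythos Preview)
The paper does not actually prove this lemma: it is stated in the Technical Preliminaries as a borrowed result from \cite{stich2019unified} and is invoked without proof in the derivation of Theorem~\ref{Theorem: Total number of iteration knowledge}. So there is no ``paper's own proof'' to compare your attempt against.

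That said, your argument is correct and is essentially the standard one from Stich's original paper. The two-phase split with a constant step until $k_0$ followed by a $\cO(1/k)$ schedule, the telescoping via the quadratic weight $(\kappa+t)(\kappa+t-1)$, and the final arithmetic bookkeeping with $T = K - k_0 \geq (K-1)/2 \geq K/4$ all go through as you describe. Your observation that $\kappa \geq 2$ (from $h \geq a$) is exactly what makes $\zeta_t = (\kappa+t-1)(\kappa+t-2)s_t$ nonnegative at $t=0$, and your lower bounds $(\kappa+T-1)(\kappa+T-2) \geq \max\{\kappa T, T^2\}$ are the right tool to close the constants: the $r_{k_0}$-noise term contributes at most $8c/(a^2K)$ and the phase-two noise at most $16c/(a^2K)$, comfortably inside $36c/(a^2K)$. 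Nothing is missing.
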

We use the next lemma to bound the trace of matrix products. 
\vspace{2mm}

\begin{lemma}
For positive semidefinite matrices $A,B \in \mathbb{R}^{d \times d}$ we have 
\begin{equation}\label{eq: trace inequality}
    \text{tr}(AB) \leq \lambda_{\max}(B) \text{tr}(A),
\end{equation}
where $\lambda_{\max}(B)$ denotes the maximum eigenvalue of $B$.
\end{lemma}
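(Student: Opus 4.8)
The plan is to prove the trace inequality $\mathrm{tr}(AB) \leq \lambda_{\max}(B)\,\mathrm{tr}(A)$ for positive semidefinite $A, B$ by diagonalizing $B$ and exploiting the fact that $A$ has a nonnegative diagonal in any orthonormal basis. The key observation is that the trace is invariant under orthogonal change of basis, so I may replace $B$ by its diagonal form without loss of generality, after which the inequality reduces to a weighted sum comparison.

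Let me describe the steps in order. First, since $B$ is symmetric positive semidefinite, by the spectral theorem I write $B = Q \Lambda Q^\top$ where $Q$ is orthogonal and $\Lambda = \mathrm{diag}(\lambda_1,\dots,\lambda_d)$ with each $\lambda_i \geq 0$ and $\lambda_{\max}(B) = \max_i \lambda_i$. Second, using cyclicity of the trace, I compute $\mathrm{tr}(AB) = \mathrm{tr}(A Q \Lambda Q^\top) = \mathrm{tr}(Q^\top A Q\,\Lambda)$. Setting $\tilde A := Q^\top A Q$, I note that $\tilde A$ is again positive semidefinite (it is congruent to $A$ via the orthogonal $Q$), so its diagonal entries $\tilde A_{ii} = e_i^\top \tilde A\, e_i \geq 0$ are all nonnegative. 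Third, since $\Lambda$ is diagonal, the product $\tilde A \Lambda$ has diagonal entries $\tilde A_{ii}\lambda_i$, hence
\begin{equation}
\mathrm{tr}(AB) = \sum_{i=1}^d \tilde A_{ii}\,\lambda_i \leq \max_{j}\lambda_j \sum_{i=1}^d \tilde A_{ii} = \lambda_{\max}(B)\,\mathrm{tr}(\tilde A),
\end{equation}
where the inequality uses $\lambda_i \leq \lambda_{\max}(B)$ together with $\tilde A_{ii} \geq 0$. Finally, I invoke orthogonal invariance once more, $\mathrm{tr}(\tilde A) = \mathrm{tr}(Q^\top A Q) = \mathrm{tr}(A)$, which completes the argument.

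The main subtlety to handle carefully is the sign requirement: the step bounding $\sum_i \tilde A_{ii}\lambda_i$ by $\lambda_{\max}\sum_i \tilde A_{ii}$ is valid only because every $\tilde A_{ii}$ is nonnegative, which is precisely where the positive semidefiniteness of $A$ enters. If $A$ were merely symmetric, some diagonal entries could be negative and the replacement $\lambda_i \to \lambda_{\max}$ could increase a term we wish to bound above, breaking the inequality. I would therefore emphasize that the diagonal entries $\tilde A_{ii} = e_i^\top (Q^\top A Q) e_i = (Qe_i)^\top A (Qe_i) \geq 0$ are nonnegative by the definition of positive semidefiniteness applied to the vector $Qe_i$. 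Everything else is a routine application of the cyclic property of the trace and its invariance under orthogonal conjugation, so I do not expect any genuine obstacle beyond keeping the semidefiniteness hypothesis in clear view.
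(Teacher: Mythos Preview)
Your proof is correct. The paper states this lemma in its technical preliminaries section without providing any proof, treating it as a standard fact; your spectral-diagonalization argument is the natural way to fill in this gap and there is nothing to compare against.
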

Next lemma proves equivalence of \algname{SPEG} and \algname{SOG}:
\begin{proposition}[\textbf{Equivalence of \algname{SPEG} and \algname{SOG}}] \label{proposition: equivalence of SPEG and SOG}
Consider the iterates of \algname{SPEG} $\{x_k, \hat{x}_k\}_{k = 1}^{\infty}$ with constant step-sizes $\om_k = \om, \gamma_k = \gamma$ in \eqref{SPEG_UpdateRule}. Then $\hat{x}_k$ follows the iteration rule of \algname{SOG} i.e.
\begin{eqnarray}
     \hat{x}_{k+1} = \hat{x}_k - \om_k F_{v_k}(\hat{x}_k) - \gamma_k \large[F_{v_k}(\hat{x}_k) - F_{v_{k-1}}(x_{k-1})\large]
\end{eqnarray}
\end{proposition}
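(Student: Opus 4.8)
The plan is to show that the $\hat x_k$ iterates generated by \algname{SPEG} satisfy the \algname{SOG} recursion by direct substitution. From the update rule \eqref{SPEG_UpdateRule} with constant step-sizes, we have $\hat x_{k+1} = x_{k+1} - \gamma F_{v_k}(\hat x_k)$ (shifting the index $k \to k+1$ in the first line, and noting the extrapolation at step $k+1$ uses the sample $v_k$). The key step is to eliminate $x_{k+1}$ using the second line of \eqref{SPEG_UpdateRule}, namely $x_{k+1} = x_k - \omega F_{v_k}(\hat x_k)$, which gives
\begin{equation*}
\hat x_{k+1} = x_k - \omega F_{v_k}(\hat x_k) - \gamma F_{v_k}(\hat x_k).
\end{equation*}

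Next I would re-express $x_k$ in terms of $\hat x_k$: from the first line of \eqref{SPEG_UpdateRule} at index $k$, we have $\hat x_k = x_k - \gamma F_{v_{k-1}}(\hat x_{k-1})$, hence $x_k = \hat x_k + \gamma F_{v_{k-1}}(\hat x_{k-1})$. However, the target \algname{SOG} recursion in the statement has $F_{v_{k-1}}(x_{k-1})$, not $F_{v_{k-1}}(\hat x_{k-1})$, so I should instead use the second line at index $k-1$: $x_k = x_{k-1} - \omega F_{v_{k-1}}(\hat x_{k-1})$. Combining, I would track carefully which representation of $x_k$ makes the telescoping work. Substituting $x_k = \hat x_k + \gamma F_{v_{k-1}}(\hat x_{k-1})$ yields $\hat x_{k+1} = \hat x_k - \omega F_{v_k}(\hat x_k) - \gamma F_{v_k}(\hat x_k) + \gamma F_{v_{k-1}}(\hat x_{k-1})$; this is the \algname{SOG} recursion but written with $\hat x_{k-1}$ inside the last operator evaluation rather than $x_{k-1}$. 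To reconcile this with the stated form, I would check the indexing convention used in the paper's definition of \algname{SOG} — since $\hat x_{-1} = x_0$ and more generally the roles of the two sequences are interchangeable up to a shift, the two formulations coincide after relabeling; in the unconstrained setting the "past iterate" fed into the optimistic correction is exactly the previous extrapolation point.

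The main obstacle I anticipate is purely bookkeeping: getting the index shifts and the pairing of sample indices $v_k$ with the correct iterates exactly right, and verifying that the base case ($k=0$, using $\hat x_{-1} = x_0$) is consistent. There is no analytical difficulty — no inequalities, no probabilistic arguments — just a careful algebraic identity. I would finish by an induction on $k$ to confirm that, with the initialization $\hat x_{-1} = x_0$, the equivalence holds for all $k \geq 1$, and remark that this recovers the known equivalence of deterministic \algname{PEG} and \algname{OG} from \cite{hsieh2019convergence} when $\cD$ is degenerate.
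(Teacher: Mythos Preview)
Your approach is correct and essentially identical to the paper's proof: both proceed by writing $\hat x_{k+1} = x_{k+1} - \gamma F_{v_k}(\hat x_k)$, substituting $x_{k+1} = x_k - \omega F_{v_k}(\hat x_k)$, and then replacing $x_k = \hat x_k + \gamma F_{v_{k-1}}(\hat x_{k-1})$. The discrepancy you noticed is simply a typo in the statement --- the paper's own proof also ends with $F_{v_{k-1}}(\hat x_{k-1})$, not $F_{v_{k-1}}(x_{k-1})$ --- so no further reconciliation or induction is needed.
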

\begin{proof}
From the update rule of \algname{SPEG}~\eqref{SPEG_UpdateRule} we get
\begin{equation*}
    \begin{split}
        \hat{x}_{k+1} = \quad & x_{k + 1} - \gamma F_{v_k}(\hat{x}_k) \\
        = \quad & x_k - \om F_{v_k}(\hat{x}_k) - \gamma F_{v_k}(\hat{x}_k) \\
        = \quad & x_k - (\om + \gamma) F_{v_k} (\hat{x}_k) \\
        = \quad & \hat{x}_k + \gamma F_{v_{k - 1}}(\hat{x}_{k - 1}) - (\om + \gamma) F_{v_k} (\hat{x}_k) \\
        = \quad & \hat{x}_k  - \om  F_{v_k} (\hat{x}_k) - \gamma \Big(F_{v_k} (\hat{x}_k) - F_{v_{k - 1}}(\hat{x}_{k - 1}) \Big).
    \end{split}
\end{equation*}
This shows that \algname{SPEG} iterations are equivalent to \algname{SOG}, with $\hat{x}_k$ being the $k$-th iterate of \algname{SOG}.
\end{proof}

\section{Example:  A Problem where the Bounded Variance Condition not Hold}\label{sec:BoundedVarianceCounterExample}
Here, we provide a simple problem that does not satisfy the bounded variance assumption. Consider the linear regression problem
\begin{eqnarray*}
        \min_{x \in \mathbb{R}}f(x) := \frac{1}{2} (a_1x - b_1)^2 + \frac{1}{2} (a_2x - b_2)^2
\end{eqnarray*}
where $x \in \mathbb{R}$. Here $f_1(x) = (a_1x - b_1)^2$ and $f_2(x) = (a_2x - b_2)^2$. Now consider the estimator $g(x)$ of $\nabla f(x)$ under uniform sampling i.e. $g(x)$ takes the value $\nabla f_1(x)$ with probability $\frac{1}{2}$ and $\nabla f_2(x)$ with probability $\frac{1}{2}$. Then we have
\begin{eqnarray*}
        \Exp \|g(x) -  \nabla f(x)\|^2 & = & \frac{1}{2} \|\nabla f_1 (x) - \nabla f(x)\|^2 + \frac{1}{2} \|\nabla f_2(x) - \nabla f(x)\|^2 \\
        & = & \frac{1}{2} \cdot \frac{1}{4}\|\nabla f_1 (x) - \nabla f_2(x)\|^2 + \frac{1}{2} \cdot \frac{1}{4}\|\nabla f_2 (x) - \nabla f_1(x)\|^2 \\
        & = & \frac{1}{4} \|\nabla f_1(x) - \nabla f_2(x)\|^2 \\
        & = & \frac{1}{4} \left( 2(a_1x - b_1)a_1 - 2(a_2x - b_2)a_2 \right)^2 \\
        & = & \left( (a_1^2 - a_2^2) x - (a_1b_1 - a_2b_2) \right)^2
\end{eqnarray*}
Therefore, $\Exp \|g(x) -  \nabla f(x)\|^2$ is a quadratic function of $x$ with the coefficient of $x$ being positive. Hence, as $x \to \infty$, we have $\Exp \|g(x) -  \nabla f(x)\|^2 \to \infty$, which means that a constant can not bound the variance.

\newpage
\section{Proofs of Results on Expected Residual}\label{section: proofs of results on expected residual}
\subsection{Proof of Lemma \ref{Lemma: variance bound}}
\begin{proof}
Using Young's Inequality~\eqref{eq: Young's Inequality}, we get 
\begin{equation*}
    \begin{split}
        \E \|g(x) - F(x)\|^2 \overset{\eqref{eq: Young's Inequality}}{\leq} \quad & 2 \E \|g(x) - F(x) - g(x^*)\|^2 + 2 \E \|g(x^*)\|^2 \\
        \overset{\eqref{eq: ER Condition}}{\leq} \quad & \delta \|x -x^*\|^2 + 2 \E \|g(x^*)\|^2. \\
    \end{split}
\end{equation*}
Then breaking down the RHS, we obtain 
\begin{equation*}
    \begin{split}
         \E \|g(x)\|^2 - \|F(x)\|^2 \overset{\eqref{eq: variance of an unbiased estimator}}{\leq} \delta \|x -x^*\|^2 + 2 \E \|g(x^*)\|^2.
    \end{split}
\end{equation*}
Now we rearrange the terms and set $\sigma_*^2 = \E \|g(x^*)\|^2$ to complete the proof of this Lemma.
\end{proof}

\begin{proposition}\label{Proposition lipschitz implies ER}
 If $F_i$ are $L_i$-lipschitz then Expected Residual condition~\eqref{eq: ER Condition} holds. In that case
\begin{equation*}
    \begin{split}
        \delta =  \frac{2}{n} \sum_{i = 1}^n L_i^2 \E (v_i^2). 
    \end{split}
\end{equation*}
In addition, if $F$ is $\mu$-quasi strongly monotone \eqref{eq: Strong Monotonicity} then we have 
\begin{equation*}
    \begin{split}
       \delta = \frac{2}{n} \sum_{i = 1}^n L_i^2 \E (v_i^2) - 2\mu^2. 
    \end{split}
\end{equation*}
\end{proposition}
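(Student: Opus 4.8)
The plan is to directly bound the left-hand side of \eqref{eq: ER Condition} using the structure $g(x) = F_v(x) = \frac{1}{n}\sum_{i=1}^n v_i F_i(x)$. First I would observe that, since $\E[v_i] = 1$, we have $\E[g(x) - g(x^*)] = F(x) - F(x^*)$, so the quantity $(g(x) - g(x^*)) - (F(x) - F(x^*))$ is a zero-mean random vector. Writing it out explicitly gives $\frac{1}{n}\sum_{i=1}^n (v_i - 1)\big(F_i(x) - F_i(x^*)\big)$. The key step is then to use the bias-variance decomposition (Lemma~\ref{Lemma: variance of an unbiased estimator} applied to the estimator $g(x)-g(x^*)$ of $F(x)-F(x^*)$), which yields
\begin{equation*}
\E\left[\|(g(x)-g(x^*)) - (F(x)-F(x^*))\|^2\right] = \E\|g(x) - g(x^*)\|^2 - \|F(x) - F(x^*)\|^2.
\end{equation*}

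Next I would bound $\E\|g(x)-g(x^*)\|^2$. Applying the $n$-dimensional Young inequality \eqref{eq: n dimensional young's inequality} to $g(x) - g(x^*) = \frac{1}{n}\sum_i v_i(F_i(x) - F_i(x^*))$ gives $\|g(x) - g(x^*)\|^2 \leq \frac{1}{n}\sum_{i=1}^n v_i^2 \|F_i(x) - F_i(x^*)\|^2$; taking expectations and using $L_i$-Lipschitzness of each $F_i$ yields $\E\|g(x) - g(x^*)\|^2 \leq \frac{1}{n}\sum_{i=1}^n L_i^2 \E[v_i^2]\,\|x - x^*\|^2$. Since $\|F(x) - F(x^*)\|^2 \geq 0$, combining this with the decomposition above gives the bound with $\delta = \frac{2}{n}\sum_{i=1}^n L_i^2 \E[v_i^2]$ (the factor $2$ matching the $\frac{\delta}{2}$ on the right-hand side of \eqref{eq: ER Condition}). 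For the $\mu$-quasi strongly monotone refinement, instead of discarding $\|F(x)-F(x^*)\|^2$ I would lower bound it: since $x^*$ solves the VIP, $F(x^*) = 0$, so by quasi-strong monotonicity \eqref{eq: Strong Monotonicity} and Cauchy–Schwarz, $\mu\|x - x^*\|^2 \leq \langle F(x), x - x^*\rangle \leq \|F(x)\|\,\|x-x^*\|$, hence $\|F(x)\|^2 = \|F(x) - F(x^*)\|^2 \geq \mu^2\|x-x^*\|^2$. Subtracting this off produces $\delta = \frac{2}{n}\sum_{i=1}^n L_i^2 \E[v_i^2] - 2\mu^2$.

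The main obstacle — really the only subtle point — is making sure the $n$-dimensional Young inequality is applied correctly to the random sum so that the cross terms are handled cleanly; using \eqref{eq: n dimensional young's inequality} pointwise (before taking expectation) avoids having to reason about covariances between the $v_i$, which matters since the proposition is stated for general samplings, not just $\tau$-minibatch. One should also note in passing that $\E[v_i^2] \geq (\E[v_i])^2 = 1 < \infty$ is implicitly assumed finite (true for all samplings considered, e.g. $\tau$-minibatch and single-element sampling), and that in the quasi-strongly monotone case one should remark that $\delta$ as given is automatically nonnegative since $\frac{1}{n}\sum_i L_i^2\E[v_i^2] \geq \frac{1}{n}\sum_i L_i^2 \geq L^2 \geq \mu^2$, so the stated $\delta$ is a legitimate (and strictly smaller) constant.
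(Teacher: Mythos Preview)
Your proposal is correct and follows essentially the same route as the paper: variance decomposition via Lemma~\ref{Lemma: variance of an unbiased estimator}, then the $n$-term Young inequality \eqref{eq: n dimensional young's inequality} applied pointwise to $\frac{1}{n}\sum_i v_i(F_i(x)-F_i(x^*))$, followed by $L_i$-Lipschitzness, and finally the Cauchy--Schwarz argument $\mu\|x-x^*\|\leq\|F(x)\|$ for the quasi-strongly monotone refinement. Your additional remarks on finiteness of $\E[v_i^2]$ and nonnegativity of the refined $\delta$ are not in the paper's proof but are reasonable side comments.
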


\begin{proof}
Note that
\begin{eqnarray}
        \E\|(F_v(x) - F_v(x^*)) - (F(x) - F(x^*))\|^2 &=&  \E\|F_v(x) - F_v(x^*)\|^2 + \|F(x) - F(x^*)\|^2 \notag\\
        && \quad - 2 \E \la F_v(x) - F_v(x^*), F(x) - F(x^*)\ra \notag\\
        &=&  \E \|F_v(x) - F_v(x^*)\|^2 - \|F(x) - F(x^*)\|^2 \notag\\
        &=&  \E \|F_v(x) - F_v(x^*)\|^2  - \|F(x)\|^2  \notag\\
        &=& \E \bigg\| \frac{1}{n} \sum_{i = 1}^n v_i (F_i(x) - F_i(x^*)) \bigg\|^2  - \|F(x)\|^2 \notag\\
        &=& \frac{1}{n^2} \E \bigg \| \sum_{i = 1}^n v_i (F_i(x) - F_i(x^*)) \bigg\|^2  - \|F(x)\|^2 \notag\\
        &\overset{\eqref{eq: n dimensional young's inequality}}{\leq}& \frac{1}{n} \sum_{i = 1}^n \E(v_i^2) \|F_i(x) - F_i(x^*)\|^2  - \|F(x)\|^2 \notag\\
        &\overset{\eqref{eq: F_i lipschitz}}{\leq}& \frac{\|x - x^*\|^2}{n} \sum_{i = 1}^n \E(v_i^2) L_i^2  - \|F(x)\|^2 \label{eq: Expected Residual bound for lipschitz quasi strongly monotone F}.
\end{eqnarray}
The first part of the lemma follows by ignoring the positive term $\|F(x)\|^2$. For the second part we assume $F$ is $\mu$-quasi strongly monotone. Then we have
$$\mu \|x - x^*\|^2 \overset{\eqref{eq: Strong Monotonicity}}{\leq} \la F(x), x - x^*\ra \overset{\eqref{eq: Cauchy Schwarz Inequality}}{\leq} \|F(x)\| \|x - x^*\|.$$ 
Cancelling $\|x - x^*\|$ from both sides we get 
\begin{equation}\label{eq: lower bound on norm of quasi strongly monotone F}
\mu \|x - x^*\| \leq \|F(x)\|.    
\end{equation}
Therefore we have the following bound for $\mu$-quasi strongly monotone operator $F$:
\begin{equation*}
        \E \|(F_v(x) - F_v(x^*)) - (F(x) - F(x^*))\|^2
        \overset{\eqref{eq: Expected Residual bound for lipschitz quasi strongly monotone F}, \eqref{eq: lower bound on norm of quasi strongly monotone F}}{\leq} \Bigg( \frac{1}{n} \sum_{i = 1}^n \E(v_i^2) L_i^2 - \mu^2 \Bigg) \|x - x^*\|^2.
\end{equation*}
This proves the second part of the lemma. This lemma ensures that the Lipschitz property is sufficient to guarantee Expected Residual~\eqref{eq: ER Condition} condition. 
\end{proof}

\subsection{Proof of Proposition \ref{Prop_SufficientCondition}}
\begin{proof}
    Proposition \ref{Proposition lipschitz implies ER} implies that Lipschitzness of all operators $F_i$ is enough to ensure that \ref{eq: ER Condition} holds. For $\tau$- minibatch sampling, denote the matrix $\textbf{R} = \Big(F_1(x) - F_1(x^*), \cdots, F_n(x) - F_n(x^*)\Big) \in \mathbb{R}^{d \times n}$. Then we obtain the following bound:
\begin{eqnarray*}
\E \|F_v(x) - F_v(x^*) - (F(x) - F(x^*))\|^2 \hspace{-3mm} &=& \hspace{-3mm} \E \left\| \frac{1}{n} \sum_{i = 1}^n v_i (F_i(x) - F_i(x^*)) - (F_i(x) - F_i(x^*)) \right \|^2 \\
&=& \hspace{-3mm}\frac{1}{n^2} \E \bigg \| \sum_{i = 1}^n (v_i -1) (F_i(x) - F_i(x^*)) \bigg\|^2 \\
&=& \hspace{-3mm}\frac{1}{n^2} \E \big \| \textbf{R}(v - \mathbf{1}) \big\|^2 \\
&=& \hspace{-3mm} \frac{1}{n^2} \E (v - \mathbf{1})^{\intercal} \textbf{R}^{\intercal}\textbf{R} (v - \mathbf{1}) \\
&=& \hspace{-3mm} \frac{1}{n^2} \E \bigg( \text{tr} \bigg( \textbf{R}^{\intercal}\textbf{R} (v - \mathbf{1}) (v - \mathbf{1})^{\intercal} \bigg) \bigg) \\
&=& \hspace{-3mm}\frac{1}{n^2} \text{tr} \bigg( \textbf{R}^{\intercal}\textbf{R} \E \bigg((v - \mathbf{1}) (v - \mathbf{1})^{\intercal} \bigg) \bigg) \\
&=& \hspace{-3mm}\frac{1}{n^2} \text{tr} \bigg( \textbf{R}^{\intercal}\textbf{R} \textbf{Var}[v] \bigg) \bigg) \\
&\overset{\eqref{eq: trace inequality}}{\leq}& \hspace{-3mm} \frac{\lambda_{\max}\big(\textbf{Var}[v]\big)}{n^2} \text{tr}(\textbf{R}^{\intercal}\textbf{R}) \\
&=& \hspace{-3mm} \frac{\lambda_{\max}\big(\textbf{Var}[v]\big)}{n^2} \sum_{i = 1}^n \|F_i(x) - F_i(x^*)\|^2 \\
&\overset{\eqref{eq: F_i lipschitz}}{\leq}& \frac{\lambda_{\max}(\textbf{Var}[v]) \|x - x^*\|^2}{n^2}\sum_{i = 1}^n L_i^2.   
\end{eqnarray*}
From the proof details of Lemma F.3 in \cite{sebbouh2019towards} we have $\lambda_{\max}(\textbf{Var}[v]) = \frac{n(n - \tau)}{\tau (n - 1)}$ for $\tau$-minibatch sampling. Thus we obtain
\begin{equation*}
    \E \big\|F_v(x) - F_v(x^*) - (F(x) - F(x^*)) \big  
    \|^2 \leq \frac{2(n - \tau)}{n \tau (n - 1)} \sum_{i = 1}^n L_i^2 \|x - x^*\|^2.
\end{equation*}
Now we focus on the derivation of $\sigma_*^2 = \E\|F_v(x^*)\|^2$ for $\tau$-minibatch sampling. We expand $\E \|F_v(x^*)\|^2$ as follows:
\begin{eqnarray}
        \E \|F_v(x^*)\|^2 &=& \frac{1}{n^2} \E \bigg\|\sum_{i = 1}^n v_iF_i(x^*) \bigg\|^2 \notag \\
        &=& \frac{1}{n^2} \E \bigg\|\sum_{i \in S} \frac{1}{p_i}F_i(x^*) \bigg\|^2 \notag\\
        &=& \frac{1}{n^2} \E \bigg\|\sum_{i = 1}^ n \textbf{1}_{i \in S}\frac{1}{p_i}F_i(x^*) \bigg\|^2 \notag\\
        &=& \frac{1}{n^2}\E \bigg \langle \sum_{i = 1}^n \textbf{1}_{i \in S}\frac{1}{p_i} F_i(x^*), \sum_{j = 1}^n \textbf{1}_{j \in S}\frac{1}{p_j} F_j(x^*) \bigg \rangle \notag\\
        &=& \frac{1}{n^2} \sum_{i,j = 1}^n \frac{P_{ij}}{p_i p_j} \langle F_i(x^*), F_j(x^*) \rangle, \label{eq: expansion for sigma}
\end{eqnarray}
where $P_{ij} = P(i, j \in S)$ and $p_i = P(i \in S)$. For $\tau$-minibatch sampling, we obtain $P_{ij} = \frac{\tau(\tau - 1)}{n(n-1)}$ and $p_i = \frac{\tau}{n}$. Plugging in these values of $P_{ij}$ and $p_i$ in \eqref{eq: expansion for sigma} we get the closed-form expression of $\sigma_*^2$. This completes the proof of Proposition \ref{Prop_SufficientCondition}.
\end{proof}

\subsection{Proof of Proposition \ref{Proposition connecting assumptions}}
Here we enlist the assumptions made on operators. Suppose $g$ is an estimator of operator $F$.
\begin{equation*}
    \begin{split}
        & \textbf{1. Bounded Operator:} \quad  \E\|g(x)\|^2 \leq \sigma^2 \\
        & \textbf{2. Bounded Variance:} \quad  \E\|g(x) - F(x)\|^2 \leq \sigma^2 \\
        & \textbf{3. Growth Condition:} \quad  \E\|g(x)\|^2 \leq \alpha \|F(x)\|^2 + \beta \\
        & \textbf{4. Expected Co-coercivity:} \quad \E \|g(x) - g(x^*)\|^2 \leq l_F \la F(x), x-x^* \ra \\
        & \textbf{5. Expected Residual:} \quad \E \|(g(x) - g(x^*)) - (F(x) - F(x^*))\|^2 \leq \frac{\delta}{2} \|x-x^*\|^2 \\
        & \textbf{6. Bound from Lemma \ref{Lemma: variance bound}:} \quad\E\|g(x)\|^2 \leq \delta \|x - x^*\|^2 + \|F(x)\|^2 + 2\sigma_*^2 \\
        & \textbf{7. $F_i$ are Lipschitz:} \quad \|F_i(x) - F_i(y)\| \leq L_i\|x - y\|\quad \forall \; i=1,\ldots,n
    \end{split}
\end{equation*}

\begin{proof} Here we will prove Proposition \ref{Proposition connecting assumptions}
\begin{itemize}
    \item $1 \implies 2$. Note that $\E \|g(x)\|^2 \leq \sigma^2 \leq \|F(x)\|^2 + \sigma^2 \implies \E \|g(x) - F(x)\| \leq \sigma^2$.
    \item $2 \implies 3$. Here $\E\|g(x) - F(x)\|^2 \leq \sigma^2 \implies \E \|g(x)\|^2 \leq \|F(x)\|^2 + \sigma^2$ as $g$ is an unbiased for estimator of $F$. Then take $\alpha = 1$ and $\beta = \sigma^2$.
    \item $3 \implies 6$. Note that $\E \|g(x)\|^2 \leq \alpha \|F(x)\|^2 + \beta \leq \alpha L^2 \|x - x^*\|^2 + \beta$. The last inequality follows from lipschitz property of $F$ and $F(x^*) = 0$. Then choose $\delta = \alpha L^2$ and $\sigma_*^2 = \nicefrac{\beta}{2}$ to get the result.
    \item $4\implies 5$. Note that expected cocoercivity and $L$-Lipschitzness of $F$ imply $\E \|(g(x) - g(x^*)) - (F(x) - F(x^*))\|^2 = \E \|g(x) - g(x^*)\|^2 - \|F(x) - F(x^*)\|^2 \leq \E \|g(x) - g(x^*)\|^2 \leq l_F \la F(x), x-x^* \ra \overset{\eqref{Lemma: Young's Inequality}}{\leq} \frac{l_F}{2L} \|F(x)\|^2 + \frac{l_F L}{2} \|x -x^*\|^2 \leq l_F L \|x -x^*\|^2$.
    \item $7 \implies 5$. This follows from Proposition \ref{Proposition lipschitz implies ER}.
    \item $5 \implies 6$. This follows from Lemma \ref{Lemma: variance bound}
\end{itemize}
\end{proof}

\newpage
\section{Main Convergence Analysis Results}\label{section: main convergence analysis results}
First, we present some results followed by iterates of \algname{SPEG}. These will play a key role in proving the Theorems later in this section. Recall that iterates of \algname{SPEG} are
\begin{equation*}
    \begin{split}
        \hat{x}_k & = x_k - \gamma_k F_{v_{k - 1}}(\hat{x}_{k - 1}), \\
    x_{k + 1} & = x_k - \om_k F_{v_k}(\hat{x}_k).
    \end{split}
\end{equation*}

\begin{lemma}\label{Lemma: Breakdown} For \algname{SPEG} iterates with step-size $\omega_k = \gamma_k = \om$, we have
\begin{equation}\label{eq: Breakdown}
    \begin{split}
        \|x_{k+1} - x^*\|^2 = \|x_{k+1} - \hat{x}_k\|^2 + \|x_k - x^*\|^2 - \|\hat{x}_k - x_k\|^2 - 2 \om \la F_{v_k}(\hat{x}_k), \hat{x}_k - x^*\ra.
    \end{split}
\end{equation}
\end{lemma}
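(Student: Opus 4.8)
The plan is to derive \eqref{eq: Breakdown} purely algebraically from the update step $x_{k+1} = x_k - \omega F_{v_k}(\hat{x}_k)$, using the elementary identity $\|a+b\|^2 = \|a\|^2 + 2\langle a,b\rangle + \|b\|^2$ twice. Note in advance that the $\gamma_k$-extrapolation defining $\hat{x}_k$ never enters the computation: only the hypothesis $\omega_k = \omega$ is used, and the identity in fact holds for an \emph{arbitrary} point $\hat{x}_k$.

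First I would split $x_{k+1} - x^* = (x_{k+1} - \hat{x}_k) + (\hat{x}_k - x^*)$ and expand the square to obtain
\[
\|x_{k+1} - x^*\|^2 = \|x_{k+1} - \hat{x}_k\|^2 + \|\hat{x}_k - x^*\|^2 + 2\langle x_{k+1} - \hat{x}_k, \hat{x}_k - x^*\rangle .
\]
Next, substituting $x_{k+1} = x_k - \omega F_{v_k}(\hat{x}_k)$ gives $x_{k+1} - \hat{x}_k = (x_k - \hat{x}_k) - \omega F_{v_k}(\hat{x}_k)$, so the inner product term becomes $2\langle x_k - \hat{x}_k, \hat{x}_k - x^*\rangle - 2\omega \langle F_{v_k}(\hat{x}_k), \hat{x}_k - x^*\rangle$, which already isolates the $-2\omega\langle F_{v_k}(\hat{x}_k), \hat{x}_k - x^*\rangle$ appearing in the claimed identity.

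The final step is to recognize that $\|\hat{x}_k - x^*\|^2 + 2\langle x_k - \hat{x}_k, \hat{x}_k - x^*\rangle = \|x_k - x^*\|^2 - \|x_k - \hat{x}_k\|^2$, which is the same square-expansion identity applied to $x_k - x^* = (x_k - \hat{x}_k) + (\hat{x}_k - x^*)$. Plugging this in and using $\|x_k - \hat{x}_k\|^2 = \|\hat{x}_k - x_k\|^2$ yields exactly \eqref{eq: Breakdown}. An equivalent route, which may typeset more shortly, is to expand $\|x_{k+1} - x^*\|^2 = \|(x_k - x^*) - \omega F_{v_k}(\hat{x}_k)\|^2$ and $\|x_{k+1} - \hat{x}_k\|^2 = \|(x_k - \hat{x}_k) - \omega F_{v_k}(\hat{x}_k)\|^2$ directly, cancel the common $\omega^2\|F_{v_k}(\hat{x}_k)\|^2$, and collect the cross terms via $x_k - x^* = (x_k - \hat{x}_k) + (\hat{x}_k - x^*)$. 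There is essentially no genuine obstacle here; since every line is an exact identity (no inequalities), the only thing to watch is the sign bookkeeping in the cross terms.
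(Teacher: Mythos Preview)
Your proposal is correct and follows essentially the same approach as the paper: a direct algebraic expansion of $\|x_{k+1}-x^*\|^2$ using the update $x_{k+1}=x_k-\omega F_{v_k}(\hat x_k)$ and regrouping of the cross terms. The paper's proof uses the three-term split $x_{k+1}-x^*=(x_{k+1}-\hat x_k)+(\hat x_k-x_k)+(x_k-x^*)$ and then recombines inner products, whereas your two-step two-term split is slightly more compact, but the underlying identity and the fact that only $\omega_k$ (not $\gamma_k$) is used are identical.
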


\begin{proof}
We have
\begin{eqnarray*}
    \|x_{k+1} - x^*\|^2 &=& \|x_{k+1} - \hat{x}_k + \hat{x}_k - x_k + x_k - x^*\|^2 \\
    &=& \|x_{k+1} - \hat{x}_k\|^2 + \|\hat{x}_k - x_k\|^2 + \|x_k - x^*\|^2 +  2 \la \hat{x}_k - x_k, x_k - x^* \ra \\
    && \quad + 2 \la x_{k+1} - \hat{x}_k, \hat{x}_k - x_k \ra + 2 \la x_{k+1} - \hat{x}_k, x_k - x^* \ra \\
    &=& \|x_{k+1} - \hat{x}_k\|^2 + \|\hat{x}_k - x_k\|^2 + \|x_k - x^*\|^2 + 2 \la x_{k+1} - \hat{x}_k, \hat{x}_k - x^* \ra \\
    && \quad + 2 \la \hat{x}_k - x_k, x_k - x^* \ra\\
    &=& \|x_{k+1} - \hat{x}_k\|^2 + \|\hat{x}_k - x_k\|^2 + \|x_k - x^*\|^2 + 2 \la x_{k+1} - \hat{x}_k, \hat{x}_k - x^* \ra \\
    && \quad + 2 \la \hat{x}_k - x_k, x_k - \hat{x}_k + \hat{x}_k - x^* \ra \\
    &=&\|x_{k+1} - \hat{x}_k\|^2 + \|\hat{x}_k - x_k\|^2 + \|x_k - x^*\|^2 + 2 \la x_{k+1} - \hat{x}_k, \hat{x}_k - x^* \ra \\
    && \quad + 2 \la \hat{x}_k - x_k,  \hat{x}_k - x^* \ra - 2\|\hat{x}_k - x_k\|^2 \\
    &=&\|x_{k+1} - \hat{x}_k\|^2 - \|\hat{x}_k - x_k\|^2 + \|x_k - x^*\|^2 + 2 \la x_{k+1} - \hat{x}_k, \hat{x}_k - x^* \ra \\
    &&\quad + 2 \la \hat{x}_k - x_k,  \hat{x}_k - x^* \ra  \\
    &=& \|x_{k+1} - \hat{x}_k\|^2 - \|\hat{x}_k - x_k\|^2 + \|x_k - x^*\|^2 + 2 \la x_{k+1} - x_k, \hat{x}_k - x^* \ra \\
    &=& \|x_{k+1} - \hat{x}_k\|^2 - \|\hat{x}_k - x_k\|^2 + \|x_k - x^*\|^2 - 2 \om \la F_{v_k}(\hat{x}_k), \hat{x}_k - x^* \ra.
\end{eqnarray*}
\end{proof}

\begin{lemma}\label{Lemma: bound on difference of gradients}
Let $F$ be $L$-Lipschitz, and let \ref{eq: ER Condition} hold. Then \algname{SPEG} iterates satisfy
\begin{equation}\label{eq: bound on difference of gradients}
    \begin{split}
        \E_{\mathcal{D}} \|F_{v_k}(\hat{x}_k) - F_{v_{k-1}}(\hat{x}_{k-1})\|^2 \leq \quad & \delta \|\hat{x}_k - x^*\|^2 + 2\delta \|\hat{x}_{k-1} - x^*\|^2 + 2 L^2 \|\hat{x}_k - \hat{x}_{k-1}\|^2  + 6 \sigma_*^2.
    \end{split}
\end{equation}
\end{lemma}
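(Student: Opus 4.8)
The plan is to bound the two-term operator difference by first splitting each stochastic evaluation into its deterministic counterpart plus stochastic noise, then apply the consequence of \ref{eq: ER Condition} given in Lemma~\ref{Lemma: variance bound} to each noise term, and finally control the remaining deterministic difference via $L$-Lipschitzness of $F$. The key point is that $F_{v_k}(\hat x_k)$ and $F_{v_{k-1}}(\hat x_{k-1})$ use \emph{independent} samples $v_k$ and $v_{k-1}$, so after conditioning appropriately the cross terms behave well; but since we only want an upper bound we can be somewhat crude and avoid a careful filtration argument by repeatedly applying Young's inequality \eqref{eq: Young's Inequality}.

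\textbf{Step 1.} Write $F_{v_k}(\hat x_k) - F_{v_{k-1}}(\hat x_{k-1}) = \big(F_{v_k}(\hat x_k) - F(\hat x_k)\big) + \big(F(\hat x_k) - F(\hat x_{k-1})\big) + \big(F(\hat x_{k-1}) - F_{v_{k-1}}(\hat x_{k-1})\big)$. Apply the inequality $\|a+b+c\|^2 \le 3\|a\|^2 + 3\|b\|^2 + 3\|c\|^2$ (a trivial instance of \eqref{eq: n dimensional young's inequality} with $n=3$) — actually, to match the constants in \eqref{eq: bound on difference of gradients} it is cleaner to first peel off the middle deterministic term with \eqref{eq: Young's Inequality} and handle the two noise terms with a further application; one arranges the constants so that the $L^2$ term ends up with coefficient $2$ and each noise term keeps a manageable constant.

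\textbf{Step 2.} For the deterministic difference, $\|F(\hat x_k) - F(\hat x_{k-1})\|^2 \le L^2\|\hat x_k - \hat x_{k-1}\|^2$ by \eqref{eq: F lipschitz}. For each noise term, apply Lemma~\ref{Lemma: variance of an unbiased estimator} to rewrite $\E\|F_{v}(\hat x) - F(\hat x)\|^2 = \E\|F_v(\hat x)\|^2 - \|F(\hat x)\|^2 \le \E\|F_v(\hat x)\|^2$, and then invoke \eqref{eq: variance bound} from Lemma~\ref{Lemma: variance bound} to get $\E\|F_v(\hat x)\|^2 \le \delta\|\hat x - x^*\|^2 + \|F(\hat x)\|^2 + 2\sigma_*^2$; the $\|F(\hat x)\|^2$ cancels against the subtracted term, leaving $\E\|F_v(\hat x) - F(\hat x)\|^2 \le \delta\|\hat x - x^*\|^2 + 2\sigma_*^2$ for each of $\hat x \in \{\hat x_k, \hat x_{k-1}\}$. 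Adding the three contributions with the appropriate Young's-inequality weights yields the $\delta\|\hat x_k - x^*\|^2 + 2\delta\|\hat x_{k-1} - x^*\|^2 + 2L^2\|\hat x_k - \hat x_{k-1}\|^2 + 6\sigma_*^2$ bound, where the asymmetry in the $\delta$ coefficients ($1$ versus $2$) reflects which term absorbs the extra factor from splitting.

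\textbf{The main obstacle} is bookkeeping the constants so that they land exactly on $(\delta, 2\delta, 2L^2, 6\sigma_*^2)$: one must choose the Young's-inequality parameters in Step 1 carefully (e.g., pulling the $\hat x_{k-1}$-noise term out with a different weight than the $\hat x_k$-noise term) and keep track of which expectations are conditional versus total. There is no deep idea here — the content is entirely in Lemma~\ref{Lemma: variance bound} plus Lipschitzness — so the only risk is an arithmetic slip in combining the pieces, and one should double-check that the independence of $v_k$ and $v_{k-1}$ is not actually needed (it is not, since we bound each squared norm separately rather than expanding an inner product).
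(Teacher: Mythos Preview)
Your decomposition and the tools you invoke (Lemma~\ref{Lemma: variance bound}, Lipschitzness) are exactly those of the paper. The gap is in your final parenthetical claim that independence of $v_k$ and $v_{k-1}$ ``is not actually needed.'' It is needed to obtain the stated constants, and the paper's proof uses it essentially.

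Concretely: write $a = F_{v_k}(\hat x_k) - F(\hat x_k)$, $b = F(\hat x_k) - F(\hat x_{k-1})$, $c = F(\hat x_{k-1}) - F_{v_{k-1}}(\hat x_{k-1})$. The paper first conditions on the past (which fixes $\hat x_k$, $\hat x_{k-1}$, and $v_{k-1}$) and uses $\E_{v_k}[a] = 0$ to kill the cross term, obtaining
\[
\E\|a+b+c\|^2 = \E\|a\|^2 + \E\|b+c\|^2
\]
exactly, with no multiplicative factor on $\E\|a\|^2$. Young's inequality is then applied only to $\|b+c\|^2 \le 2\|b\|^2 + 2\|c\|^2$, producing the asymmetric coefficients $(1,2,2)$ on $(\E\|a\|^2,\E\|b\|^2,\E\|c\|^2)$, which after the variance bound and Lipschitz estimate become $(\delta,\,2L^2,\,2\delta)$ with total $\sigma_*^2$ contribution $2+4=6$.

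If you refuse to use this orthogonality and rely purely on Young's inequality, the coefficient on $\E\|a\|^2$ cannot be brought below $2$ (any splitting $\|a+(b+c)\|^2 \le (1+\alpha)\|a\|^2 + (1+\alpha^{-1})\|b+c\|^2$ requires $\alpha>0$). So your proposed route would yield at best something like $(2\delta,\,2\delta,\,2L^2,\,8\sigma_*^2)$ or worse, not the lemma as stated. The ``asymmetry'' you attribute to how you split with Young's is in fact produced by the vanishing cross term, not by clever weighting. The fix is simply to reinstate the conditioning step you mention at the outset and then discard.
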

\begin{proof}
\begin{eqnarray*}
    \E_{\mathcal{D}} \|F_{v_k}(\hat{x}_k) - F_{v_{k-1}}(\hat{x}_{k-1})\|^2 &=&\E_{\mathcal{D}} \|F_{v_k}(\hat{x}_k) - F(\hat{x}_{k})\|^2 + \E_{\mathcal{D}} \|F(\hat{x}_k) - F_{v_{k-1}}(\hat{x}_{k-1})\|^2  \\
    && \quad + 2 \E_{\mathcal{D}} \la F_{v_k}(\hat{x}_k) - F(\hat{x}_{k}), F(\hat{x}_k) - F_{v_{k-1}}(\hat{x}_{k-1}) \ra \\
    &=& \E_{v_{k}} \|F_{v_k}(\hat{x}_k) - F(\hat{x}_{k})\|^2 + \E_{\mathcal{D}} \|F(\hat{x}_k) - F_{v_{k-1}}(\hat{x}_{k-1})\|^2 \\
    &\overset{\eqref{eq: Young's Inequality}}{\leq}& \E_{\mathcal{D}} \|F_{v_k}(\hat{x}_k) - F(\hat{x}_{k})\|^2 + 2\E_{\mathcal{D}} \|F(\hat{x}_k) - F(\hat{x}_{k-1})\|^2 \\
    && \quad + 2\E_{\mathcal{D}} \|F(\hat{x}_{k-1}) - F_{v_{k-1}}(\hat{x}_{k-1})\|^2 \\
    &=& \E_{\mathcal{D}} \|F_{v_k}(\hat{x}_k)\|^2 - \|F(\hat{x}_{k})\|^2 + 2 \|F(\hat{x}_k) - F(\hat{x}_{k-1})\|^2\\
    && \quad + 2\E_{\mathcal{D}} \|F_{v_{k-1}}(\hat{x}_{k-1}) \|^2 - 2 \|F(\hat{x}_{k-1})\|^2 \\
    &\overset{\eqref{eq: variance bound}}{\leq}& \delta \|\hat{x}_k - x^*\|^2 + 2\delta \|\hat{x}_{k-1} - x^*\|^2 + 6\sigma_*^2\\
    && \quad + 2 \|F(\hat{x}_k) - F(\hat{x}_{k-1})\|^2 \\
    &\overset{\eqref{eq: F lipschitz}}{\leq}& \delta \|\hat{x}_k - x^*\|^2 + 2\delta \|\hat{x}_{k-1} - x^*\|^2 + 6\sigma_*^2\\
    && \quad + 2 L^2 \|\hat{x}_k - \hat{x}_{k-1}\|^2.
\end{eqnarray*}
\end{proof}

\begin{lemma}\label{Lemma: conditions}
For $\om \in \bigg[0,  \frac{1}{4L} \bigg]$ the following two conditions hold:
\begin{align}
        & 2 \om(\mu - \om \delta) + 8 \om^2 L^2 -1 \leq 0,  \label{eq: conditions 1}\\
        \text{and}\quad & 8 \om^2 (\delta + L^2) \leq 1 - \om \mu + 9 \om^2\delta.  \label{eq: conditions 2}
\end{align}
\end{lemma}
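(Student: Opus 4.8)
The plan is to verify both inequalities of Lemma~\ref{Lemma: conditions} directly by treating them as quadratic (in fact, affine-in-$\om^2$ plus affine-in-$\om$) inequalities and bounding each term using only $\om \leq \nicefrac{1}{(4L)}$, which in particular gives $\om L \leq \nicefrac14$ and hence $\om^2 L^2 \leq \nicefrac{1}{16}$. I will also need a relationship between $\delta$, $\mu$, and $L$; the natural one to invoke is $\mu \leq L$ (quasi-strong monotonicity combined with $L$-Lipschitzness of $F$ and $F(x^*)=0$ gives $\mu\|x-x^*\| \leq \|F(x)\| \leq L\|x-x^*\|$, cf.\ \eqref{eq: lower bound on norm of quasi strongly monotone F}), and possibly a bound of the form $\om\delta \leq$ (something small). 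Looking at the step-size constraint \eqref{eq:constant_stepsize} used in Theorem~\ref{Theorem: constant stepsize theorem}, namely $\om \leq \nicefrac{\mu}{(18\delta)}$, I suspect the intended reading is that Lemma~\ref{Lemma: conditions} is applied under the additional standing assumption $\om \leq \nicefrac{\mu}{(18\delta)}$ as well; but as literally stated the lemma only assumes $\om \in [0,\nicefrac{1}{(4L)}]$, so I will first check whether $\om L \leq \nicefrac14$ alone suffices, and if a term like $\om^2\delta$ or $\om\delta$ is genuinely uncontrolled, I will point out that the hypothesis must implicitly also include $\om \leq \nicefrac{\mu}{(18\delta)}$ and proceed under that.

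For \eqref{eq: conditions 1}: rewrite the left-hand side as $2\om\mu - 2\om^2\delta + 8\om^2 L^2 - 1$. I will drop the $-2\om^2\delta \leq 0$ term, so it suffices to show $2\om\mu + 8\om^2 L^2 \leq 1$. Using $\om\mu \leq \om L \leq \nicefrac14$ gives $2\om\mu \leq \nicefrac12$, and $8\om^2 L^2 \leq \nicefrac{8}{16} = \nicefrac12$, so the sum is $\leq 1$. That closes \eqref{eq: conditions 1} cleanly with only $\om \leq \nicefrac{1}{(4L)}$ and $\mu \leq L$.

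For \eqref{eq: conditions 2}: the inequality is $8\om^2\delta + 8\om^2 L^2 \leq 1 - \om\mu + 9\om^2\delta$, i.e.\ $8\om^2 L^2 + \om\mu \leq 1 + \om^2\delta$. Since $\om^2\delta \geq 0$, it suffices to show $8\om^2 L^2 + \om\mu \leq 1$; by the same bounds as above, $8\om^2 L^2 \leq \nicefrac12$ and $\om\mu \leq \om L \leq \nicefrac14$, so the left side is $\leq \nicefrac34 \leq 1$. So \eqref{eq: conditions 2} also follows from $\om \leq \nicefrac{1}{(4L)}$ and $\mu \leq L$ alone.

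The only genuine subtlety — and the step I expect to need care — is justifying $\mu \leq L$: this is not literally among the hypotheses of Lemma~\ref{Lemma: conditions} as displayed, but it holds for any operator that is simultaneously $L$-Lipschitz and $\mu$-quasi strongly monotone (the setting in which this lemma is used), via \eqref{eq: lower bound on norm of quasi strongly monotone F} together with $\|F(x)\| = \|F(x)-F(x^*)\| \leq L\|x-x^*\|$. I will state this as a one-line preliminary observation. If for some reason one wants to avoid even $\mu\leq L$, one can instead use $\om\mu \leq \om\delta$-type bounds coming from $\om \leq \nicefrac{\mu}{(18\delta)}$ when that constraint is in force, but the $\mu\leq L$ route is cleanest and self-contained. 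Everything else is arithmetic with the single inequality $\om L \leq \nicefrac14$.
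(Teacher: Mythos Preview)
Your proposal is correct and follows essentially the same approach as the paper: drop the nonpositive $-\om^2\delta$ terms, then use $\om L \leq \nicefrac{1}{4}$ together with $\mu \leq L$ to bound the remaining linear and quadratic pieces. The paper presents the same chain of inequalities (writing $2\om\mu \leq \nicefrac{\mu}{2L} \leq \nicefrac{1}{2}$ rather than $2\om\mu \leq 2\om L \leq \nicefrac{1}{2}$, but this is the same step), and it rewrites \eqref{eq: conditions 2} as $\om(\mu - \om\delta) + 8\om^2 L^2 - 1 \leq 0$, which is exactly your observation that it suffices to show $8\om^2 L^2 + \om\mu \leq 1$ after discarding $\om^2\delta \geq 0$.
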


\begin{proof}
Note that for $\om \in \bigg[0,  \frac{1}{4L} \bigg]$, we have $$2 \om(\mu - \om \delta) + 8 \om^2 L^2 -1 \overset{\om^2\delta \geq 0}{\leq} 2 \om\mu  + 8 \om^2 L^2 -1 \overset{\om \leq \frac{1}{4L}}{\leq} \frac{\mu}{2L} + \frac{1}{2} - 1 \overset{\mu \leq L}{\leq} 0.$$
This proves the first condition. The second condition is equivalent to $ \om(\mu - \om \delta) + 8 \om^2 L^2 -1 \leq 0 $, which is again true using similar arguments.
\end{proof}

\subsection{Proof of Theorem \ref{Theorem: constant stepsize theorem}}
\begin{proof}
For $\om \in \bigg[0, \frac{\mu}{18 \delta}\bigg]$ we have $\om(\mu - 9\om \delta) \geq 0$ and $1 - \om(\mu - 9 \om \delta) \leq 1 - \frac{\om \mu}{2}$. Then we derive
\begin{eqnarray*}
    \E_{\mathcal{D}}[\|x_{k+1} - x^*\|^2 + \|x_{k+1} - \hat{x}_k\|^2] &\overset{\eqref{eq: Breakdown}}{=}&\|x_k - x^*\|^2 + 2\E_{\mathcal{D}} \|x_{k+1} - \hat{x}_k\|^2 - \|\hat{x}_k - x_k\|^2 \\
    && \quad - 2 \om \E_{\mathcal{D}} \la F_{v_{k}}(\hat{x}_k), \hat{x}_k - x^*\ra \\
    &=&\|x_k - x^*\|^2 + 2\E_{\mathcal{D}} \|x_{k+1} - \hat{x}_k\|^2 - \|\hat{x}_k - x_k\|^2\\
    && \quad - 2 \om \la F(\hat{x}_k), \hat{x}_k - x^*\ra \\
    &\overset{\eqref{eq: Strong Monotonicity}}{\leq}&\|x_k - x^*\|^2 + 2\E_{\mathcal{D}} \|x_{k+1} - \hat{x}_k\|^2 - \|\hat{x}_k - x_k\|^2\\
    && \quad - 2 \om \mu \| \hat{x}_k - x^* \|^2 \\
    &=&\|x_k - x^*\|^2 + 2\om^2 \E_{\mathcal{D}} \|F_{v_k}(\hat{x}_k) - F_{v_{k-1}}(\hat{x}_{k-1})\|^2 \\
    && \quad - \|\hat{x}_k - x_k\|^2 - 2 \om \mu \| \hat{x}_k - x^* \|^2 \\
    &\overset{\eqref{eq: bound on difference of gradients}}{\leq}&\|x_k - x^*\|^2 + 2\om^2 \bigg(\delta \|\hat{x}_k - x^*\|^2 + 2\delta \|\hat{x}_{k-1} - x^*\|^2\\
    &&\quad  + 2 L^2 \|\hat{x}_k - \hat{x}_{k-1}\|^2  + 6\sigma_*^2 \bigg) - \|\hat{x}_k - x_k\|^2 \\
    && \quad - 2 \om \mu \| \hat{x}_k - x^* \|^2 \\
    &=&\|x_k - x^*\|^2 - 2 \om (\mu - \om \delta)  \| \hat{x}_k - x^*\|^2 \\
    && \quad + 4 \om^2 \delta  \|\hat{x}_{k-1} - x^*\|^2 + 4 \om^2 L^2 \|\hat{x}_k - \hat{x}_{k-1}\|^2 \\
    && \quad - \|\hat{x}_k - x_k\|^2 + 12 \om^2 \sigma_*^2 \\
    &\overset{\eqref{eq: Young's Inequality}}{\leq}& \|x_k - x^*\|^2 - \om (\mu - \om \delta) \| x_k - x^*\|^2 \\
    && \quad + 2 \om (\mu - \om \delta) \| x_k - \hat{x}_k\|^2 + 4 \om^2 \delta \|\hat{x}_{k-1} - x^*\|^2 \\
    &&\quad + 4 \om^2 L^2 \|\hat{x}_k - \hat{x}_{k-1}\|^2 - \|\hat{x}_k - x_k\|^2\\
    &&\quad + 12 \om^2 \sigma_*^2 \\
    &\overset{\eqref{eq: Young's Inequality}}{\leq}& \|x_k - x^*\|^2 - \om (\mu - \om \delta) \| x_k - x^*\|^2 \\
    &&\quad + 2 \om (\mu - \om \delta) \| x_k - \hat{x}_k\|^2 + 8 \om^2 \delta \|\hat{x}_{k-1} - x_k\|^2 \\
    &&\quad + 8 \om^2 \delta \|x_k - x^*\|^2+ 8 \om^2 L^2 \|\hat{x}_k - x_k\|^2 \\
    &&\quad  + 8 \om^2 L^2 \|x_k - \hat{x}_{k-1}\|^2 - \|\hat{x}_k - x_k\|^2 + 12 \om^2 \sigma_*^2 \\
    &=& (1 - \om \mu + 9 \om^2 \delta ) \|x_k - x^*\|^2 \\
    &&\quad + (8\om^2 \delta + 8\om^2 L^2 ) \|x_k - \hat{x}_{k-1} \|^2 \\
    && \quad  + (2\om(\mu - \om\delta) +  8 \om^2L^2 -1) \|x_k - \hat{x}_k \|^2 + 12\om^2 \sigma_*^2.    
\end{eqnarray*}
Then using \eqref{eq: conditions 1}, \eqref{eq: conditions 2} we have
\begin{eqnarray*}
    \E_{\mathcal{D}}[\|x_{k+1} - x^*\|^2 + \|x_{k+1} - \hat{x}_k\|^2] &\leq& (1 - \om \mu + 9 \om^2\delta) \bigg(\|x_k - x^*\|^2 + \|x_k - \hat{x}_{k-1}\|^2 \bigg) \\
    && \quad + 12 \om^2 \sigma_*^2.
\end{eqnarray*}
Then we take total expectation with respect to the algorithm to obtain the following recurrence:
\begin{equation}\label{eq: recurrece after total expectation}
    R_{k+1}^2 \leq (1 - \omega \mu + 9 \omega^2 \delta) R_k^2 + 12 \omega^2 \sigma_*^2.
\end{equation}
Using the inequality $ 1- \omega(\mu - 9 \omega \delta) \leq 1 - \frac{\omega \mu}{2}$, we have
\begin{equation}\label{eq: before recurrence}
    \begin{split}
    \E\bigg[\|x_{k+1} - x^*\|^2 + \|x_{k+1} - \hat{x}_k\|^2 \bigg] & \leq \bigg(1 - \frac{\om \mu}{2}\bigg) \E \bigg[\|x_k - x^*\|^2 + \|x_k - \hat{x}_{k-1}\|^2 \bigg] + 12 \om^2 \sigma_*^2.
    \end{split}
\end{equation}
The theorem follows by unrolling the above recurrence. In order to compute the iteration complexity of \algname{SPEG}, we consider any arbitrary $\varepsilon > 0$. Then we choose the step-size $\om$ such that $\frac{24 \om \sigma_*^2}{\mu} \leq \frac{\varepsilon}{2}$ i.e. $\om \leq \frac{\varepsilon \mu}{48 \sigma_*^2}$. Next we will choose the number of iterations $k$ such that $(1 - \frac{\om \mu}{2})^k R_0^2 \leq \frac{\varepsilon}{2}$. It is equivalent to choosing $k$ such that 
\begin{equation*}
    \log \bigg( \frac{2 R_0^2}{\varepsilon} \bigg) \leq k \log \bigg(  \frac{1}{1 - \frac{\om \mu}{2}}\bigg).
\end{equation*}
Now using the fact $\log \big( \frac{1}{\rho}\big) \geq 1 - \rho$ for $0 < \rho \leq 1$, we get $\log \Big( \frac{2 R_0^2}{\varepsilon} \Big) \leq  \frac{k\om \mu}{2}$, or equivalently $k \geq \frac{2}{\om \mu} \log \Big( \frac{2 R_0^2}{\varepsilon} \Big)$. Therefore, with step-size $\om = \min \left\{\frac{\mu}{18 \delta}, \frac{1}{4L}, \frac{\varepsilon \mu}{48 \sigma_*^2} \right\}$ we get the following lower bound on the number of iterations
\begin{equation*}
    k \geq \max \bigg\{\frac{8L}{\mu}, \frac{36 \delta}{\mu^2}, \frac{96\sigma_*^2}{\varepsilon \mu^2} \bigg\} \log \bigg( \frac{2 R_0^2}{\varepsilon} \bigg).
\end{equation*}
\end{proof}

\subsection{Proof of Theorem \ref{SPEG switching rule}}
\begin{proof}
For $\om \leq \min \big\{\frac{1}{4L}, \frac{\mu}{18 \delta} \big\}$, from Theorem \ref{Theorem: constant stepsize theorem} we obtain 
$$
R_{k+1}^2 \leq \bigg(1 - \frac{\om \mu}{2} \bigg)^{k+1} R_0^2 + \frac{24 \om \sigma_*^2}{\mu}. 
$$
Let the step-size $\om_k = \frac{2k+1}{(k+1)^2} \frac{2}{\mu}$ and $k^*$ be an integer that satisfies $\om_{k^*} \leq \Bar{\om}$. In particular this holds when $k^* \geq \ceil*{\frac{4}{\mu \Bar{\om}} - 1}$. Note that $\om_k$ is decreasing in $k$ and consequently $\om_k \leq \Bar{\om}$ for all $k \geq k^*$. Therefore, from \eqref{eq: before recurrence} we derive 
$$
R_{k+1}^2 \leq \bigg(1 - \om_k\frac{\mu}{2} \bigg) R_{k}^2 + 12\om_k^2 \sigma_*^2
$$
for all $k \geq k^*$. Then we replace $\om_k$ with $\frac{2k+1}{(k+1)^2} \frac{2}{\mu}$ to obtain
\begin{eqnarray*}
    R_{k+1}^2 &\leq& \bigg(1 - \frac{2k +1}{(k+1)^2} \bigg) R_{k}^2 + 48 \sigma_*^2 \frac{(2k + 1)^2}{\mu^2(k+1)^4} \\
     &=&\frac{k^2}{(k+1)^2} R_{k}^2 + 48 \sigma_*^2 \frac{(2k + 1)^2}{\mu^2(k+1)^4}.
\end{eqnarray*}
Multiplying both sides by $(k+1)^2$ we get
\begin{eqnarray*}
    (k+1)^2 R_{k+1}^2 &\leq& k^2 R_k^2 + \frac{48 \sigma_*^2}{\mu^2} \bigg(\frac{2k+1}{k+1} \bigg)^2 \\
    &\leq& k^2 R_k^2 + \frac{192\sigma_*^2}{\mu^2},
\end{eqnarray*}
where in the last line follows from $\frac{2k + 1}{k + 1} < 2$. Rearranging and summing the last expression for $t = k^*,\cdots, k$ we obtain 
\begin{equation*}
    \begin{split}
        & \sum_{t = k^*}^k (t+1)^2 R_{t+1}^2 -  t^2 R_t^2 \leq \frac{192 \sigma_*^2}{\mu^2}(k - k^*).
    \end{split}
\end{equation*}
Using telescopic sum and dividing both sides by $(k+1)^2$ we obtain
\begin{equation}\label{eq: decreasing stepsize bound}
    R_{k+1}^2 \leq \bigg(\frac{k^*}{k+1}\bigg)^2 R_{k^*}^2+ \frac{192 \sigma_*^2 (k - k^*)}{\mu^2 (k+1)^2}.
\end{equation}
Suppose for $k \leq k^*$, we have $\om_k = \Bar{\om} = \min \Big\{\frac{1}{4L}, \frac{\mu}{18 \delta} \Big\}$ i.e. constant step-size. Then from \eqref{eq:SPEG_const_steps_neighborhood}, we obtain $R_{k^*}^2 \leq \Big(1 - \frac{\mu \Bar{\om}}{2}\Big)^{k^*} R_0^2 + \frac{24 \Bar{\om} \sigma_*^2}{\mu}$. This bound on $R_{k^*}^2$, combined with \eqref{eq: decreasing stepsize bound} yields
\begin{equation*}
    \begin{split}
        R_{k+1}^2 \leq \bigg(\frac{k^*}{k+1}\bigg)^2 \bigg(1 - \frac{\mu \Bar{\om}}{2}\bigg)^{k^*} R_0^2 + \bigg(\frac{k^*}{k+1}\bigg)^2 \frac{24 \Bar{\om} \sigma_*^2}{\mu} + \frac{192 \sigma_*^2 (k - k^*)}{\mu^2 (k+1)^2}.
    \end{split}
\end{equation*}
Now we want to choose $k^*$ which minimizes the expression $\big(\frac{k^*}{k+1}\big)^2 \frac{24\Bar{\om} \sigma_*^2}{\mu} + \frac{192 \sigma_*^2 (k - k^*)}{\mu^2 (k+1)^2}$. Note that, it is minimized at $\frac{4}{\mu \Bar{\om}}$, hence we choose $k^* = \ceil*{\frac{4}{\mu \Bar{\om}}}$. Therefore, using this value of $k^*$, we obtain
\begin{eqnarray*}
    R_{k+1}^2 &\leq&\bigg(\frac{k^*}{k+1}\bigg)^2 \bigg(1 - \frac{2}{k^*} \bigg)^{k^*} R_0^2 + \frac{24 \sigma_*^2}{\mu^2 (k+1)^2} (8k - 4k^*) \\
    &\leq&\bigg(\frac{k^*}{k+1}\bigg)^2 \bigg(1 - \frac{2}{k^*} \bigg)^{k^*} R_0^2 + \frac{192 k \sigma_*^2}{\mu^2 (k+1)^2}\\
    &\leq&\bigg(\frac{k^*}{k+1}\bigg)^2 \frac{R_0^2}{e^2} + \frac{192 \sigma_*^2}{\mu^2 (k+1)}.
\end{eqnarray*}
The last line follows from $\Big(1 - \frac{1}{x} \Big)^x \leq e^{-1}$ for all $x \geq 1$. This completes the proof.
\end{proof}

\subsection{Proof of Theorem \ref{Theorem: Total number of iteration knowledge}}
\begin{proof}
For $ 0 < \om_k \leq \big\{ \frac{1}{4L},\frac{\mu}{18 \delta} \big\}$ we obtain the following bound from Theorem \ref{Theorem: constant stepsize theorem}:
$$
R_k^2 \leq \bigg(1 - \frac{\mu \om_k}{2} \bigg) R_{k-1}^2 + 12 \om_k^2 \sigma_*^2.
$$
Then using Lemma \ref{Lemma: Stich lemma} with $a = \frac{\mu}{2}, h = \frac{1}{\Bar{\om}} $ and $c = 12 \sigma_*^2$ we complete the proof of this Theorem.
\end{proof}

\subsection{Proof of Theorem \ref{thm:weak_MVI}}
\vspace{5mm}
\begin{theorem}\label{thm:weak_MVI}
    Let $F$ be $L$-Lipschitz and satisfy Weak Minty condition with parameter $\rho < \nicefrac{1}{(2L)}$. Assume that inequality \eqref{eq: variance bound} holds (e.g., it holds whenever Assumption~\ref{as:expected_residual} holds, see Lemma~\ref{Lemma: variance bound}). Assume that $\gamma_k = \gamma$, $\omega_k = \omega$ and
    \begin{equation*}
        \max\left\{2\rho, \frac{1}{2L}\right\} < \gamma < \frac{1}{L},\quad 0 < \omega < \min\left\{\gamma - 2\rho, \frac{1}{4L} - \frac{\gamma}{4}\right\}, \quad \delta \leq \frac{(1-L\gamma)L^3\omega}{32}.
    \end{equation*}
    Then, for all $K \geq 2$ the iterates produced by \algname{SPEG} satisfy
    \begin{eqnarray}
        \min\limits_{0\leq k \leq K-1}\E\left[\|F(\hat x_k)\|^2\right] &\leq& \frac{(1 + 8\omega\gamma (\delta + L^2) - L\gamma)\left(1+\frac{48\omega\gamma \delta}{(1-L\gamma)^2}\right)^{K-1}\|x_0 - x^*\|^2}{\omega\gamma (1 - L(\gamma + 4\omega)) (K-1)}\notag \\
        &&\quad + \frac{8 \left(8 + \frac{(1-L\gamma)^2}{K-1}\left(1+\frac{48\omega\gamma \delta}{(1-L\gamma)^2}\right)^{K-1}\right) \sigma_*^2}{(1-L\gamma)^2(1-L(\gamma + 4\omega))}. \label{eq:SPEG_weak_MVI_result_appendix}
    \end{eqnarray}
\end{theorem}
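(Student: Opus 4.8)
The plan is to establish a one-step Lyapunov inequality and telescope. As in the quasi-strongly monotone analysis (which uses $R_k^2 = \E[\|x_k-x^*\|^2+\|x_k-\hat x_{k-1}\|^2]$), I would work with a potential $\Phi_k$ built from $\|x_k-x^*\|^2$ together with lagged terms such as $\|x_k-\hat x_{k-1}\|^2$ and/or $\gamma^2\|F_{v_{k-1}}(\hat x_{k-1})\|^2$ --- keeping enough ``lag'' that the displacement of the extrapolated points telescopes --- and aim for a recursion
\[
\E[\Phi_{k+1}] \le \Big(1+\tfrac{48\omega\gamma\delta}{(1-L\gamma)^2}\Big)\E[\Phi_k] - \omega\gamma\big(1-L(\gamma+4\omega)\big)\E\|F(\hat x_k)\|^2 + (\text{const})\,\sigma_*^2
\]
for $k\ge 1$, plus a separate first-step estimate (where $\hat x_{-1}=x_0$) bounding $\E[\Phi_1]$ by $\big(1-L\gamma+8\omega\gamma(\delta+L^2)\big)\|x_0-x^*\|^2$ up to an $O(\sigma_*^2)$ term, which is where that factor in \eqref{eq:SPEG_weak_MVI_result_appendix} originates. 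Summing $(1+q)^{-(k+1)}$ times this recursion with $q=\tfrac{48\omega\gamma\delta}{(1-L\gamma)^2}$, lower-bounding $\sum_k (1+q)^{-(k+1)}\E\|F(\hat x_k)\|^2\ge (1+q)^{-(K-1)}(K-1)\min_k\E\|F(\hat x_k)\|^2$, bounding the resulting geometric noise sum (split into an $O(\sigma_*^2)$ part and an $O((1+q)^{K-1}\sigma_*^2/(K-1))$ part), and substituting the explicit constants then yields \eqref{eq:SPEG_weak_MVI_result_appendix}.

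The starting identity is the general--step-size version of Lemma~\ref{Lemma: Breakdown}; its proof only uses $x_{k+1}-x_k=-\omega F_{v_k}(\hat x_k)$, so it extends verbatim to $\gamma\ne\omega$:
\[
\|x_{k+1}-x^*\|^2 = \|x_{k+1}-\hat x_k\|^2 - \gamma^2\|F_{v_{k-1}}(\hat x_{k-1})\|^2 + \|x_k-x^*\|^2 - 2\omega\langle F_{v_k}(\hat x_k),\hat x_k-x^*\rangle .
\]
Taking the conditional expectation over $v_k$, the Weak Minty condition~\eqref{eq: weak MVI} gives $-2\omega\langle F(\hat x_k),\hat x_k-x^*\rangle\le 2\omega\rho\|F(\hat x_k)\|^2$, while $\E_{v_k}\|x_{k+1}-\hat x_k\|^2=\E_{v_k}\|\gamma F_{v_{k-1}}(\hat x_{k-1})-\omega F_{v_k}(\hat x_k)\|^2$ expands into $\gamma^2\|F_{v_{k-1}}(\hat x_{k-1})\|^2$ (which cancels the negative term above), a cross term $-2\omega\gamma\langle F_{v_{k-1}}(\hat x_{k-1}),F(\hat x_k)\rangle$, and $\omega^2\E_{v_k}\|F_{v_k}(\hat x_k)\|^2$, the last bounded by \eqref{eq: variance bound}.

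The heart of the proof is assembling these into the recursion. I would rewrite the cross term via $-2\langle a,b\rangle=\|a-b\|^2-\|a\|^2-\|b\|^2$ with $a=F_{v_{k-1}}(\hat x_{k-1})$, $b=F(\hat x_k)$ to extract a genuinely negative $-\omega\gamma\|F(\hat x_k)\|^2$ (and a helpful $-\omega\gamma\|F_{v_{k-1}}(\hat x_{k-1})\|^2$), then bound $\|F_{v_{k-1}}(\hat x_{k-1})-F(\hat x_k)\|^2$ (cf. Lemma~\ref{Lemma: bound on difference of gradients}) by the stochastic noise $\|F_{v_{k-1}}(\hat x_{k-1})-F(\hat x_{k-1})\|^2$ --- whose full expectation is $\le\delta\E\|\hat x_{k-1}-x^*\|^2+2\sigma_*^2$ by Lemma~\ref{Lemma: variance bound} and the tower property --- plus $L^2\|\hat x_k-\hat x_{k-1}\|^2$, and then use $\hat x_k-\hat x_{k-1}=\gamma F_{v_{k-2}}(\hat x_{k-2})-(\omega+\gamma)F_{v_{k-1}}(\hat x_{k-1})$ so that the $\gamma^2\|F_{v_{k-2}}(\hat x_{k-2})\|^2=\|x_{k-1}-\hat x_{k-1}\|^2$ piece is routed back into the lagged slot of $\Phi_k$ (this is precisely why a two-steps-back term must sit inside $\Phi_k$). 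Using $\|\hat x_k-x^*\|^2\le 2\|x_k-x^*\|^2+2\gamma^2\|F_{v_{k-1}}(\hat x_{k-1})\|^2$ to funnel the $\delta\|\hat x_k-x^*\|^2$ term into $\Phi_k$, and repeated Young's inequalities \eqref{eq: Young's Inequality} with weights tuned to the problem constants, the coefficient of $\|F(\hat x_k)\|^2$ collects $2\omega\rho$, $-\omega\gamma$, and further $O(\omega^2)$ and $O(\omega\gamma)$ contributions and becomes $\le-\omega\gamma(1-L(\gamma+4\omega))<0$ exactly under $\omega<\gamma-2\rho$ and $\omega<\tfrac{1}{4L}-\tfrac{\gamma}{4}$ (i.e. $L(\gamma+4\omega)<1$), while $\gamma<1/L$ and $\delta\le\tfrac{(1-L\gamma)L^3\omega}{32}$ are what make all $\delta\|\,\cdot\,-x^*\|^2$ contributions absorbable into the multiplicative factor $1+\tfrac{48\omega\gamma\delta}{(1-L\gamma)^2}$.

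The main obstacle is this assembly: one must choose $\Phi_k$ and all the Young weights so that every cross term and every ``two-steps-back'' operator norm $\|F_{v_{k-2}}(\hat x_{k-2})\|^2$ telescopes into $\Phi_k$ rather than accumulating, and so that the $\|F(\hat x_k)\|^2$ coefficient turns negative inside the sharp window $\rho<\tfrac{1}{2L}$, $\max\{2\rho,\tfrac{1}{2L}\}<\gamma<\tfrac1L$, $0<\omega<\min\{\gamma-2\rho,\tfrac{1}{4L}-\tfrac{\gamma}{4}\}$ --- a bound on $\rho$ that, by \cite{gorbunov2022convergence}, cannot be improved even deterministically. This bookkeeping is delicate because, unlike \algname{SEG}, \algname{SPEG} uses distinct samples $v_{k-1},v_k$ for the extrapolation and update steps, so $\hat x_k$ depends on $v_{k-1}$ and cannot be ``unrolled'': the stochastic vector $F_{v_{k-1}}(\hat x_{k-1})$ itself, not its conditional expectation, must be carried inside the Lyapunov function, and the \ref{eq: ER Condition} bound on its noise can be applied only after passing to full expectation via the tower property.
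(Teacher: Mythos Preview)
Your overall strategy---derive a one-step ``descent up to noise'' inequality, separate the $k=0$ step, then telescope with geometrically growing weights $w_k\propto(1+q)^{-k}$ where $q=\tfrac{48\omega\gamma\delta}{(1-L\gamma)^2}$---matches the paper. The starting identity you write is fine, and your handling of the final summation (lower-bounding the weighted average by $(1+q)^{-(K-1)}(K-1)\min_k$, splitting the noise sum) is exactly what the paper does.

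Where the paper differs substantively is in the \emph{middle} of the argument. It does \emph{not} assemble a single Lyapunov $\Phi_k$ carrying lagged operator norms. Instead it keeps two separate tracks. The first is a clean inequality
\[
\E\|x_{k+1}-x^*\|^2 \le \E\|x_k-x^*\|^2 - \omega\gamma\,\E\|F_{v_{k-1}}(\hat x_{k-1})\|^2 + \omega\gamma\,\Delta_k,
\qquad \Delta_k:=\E\|F_{v_k}(\hat x_k)-F_{v_{k-1}}(\hat x_{k-1})\|^2,
\]
obtained by applying the polarization identity with \emph{both} stochastic operators (not with $F(\hat x_k)$ as you propose) and then dropping the term $-\omega(\gamma-2\rho-\omega)\E\|F_{v_k}(\hat x_k)\|^2$ via Jensen. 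The second track is a \emph{recursive} bound on $\Delta_k$ in terms of $\Delta_{k-1}$: after splitting off the noise with Young's parameter $\alpha$, the paper expands $\|\hat x_k-\hat x_{k-1}\|^2=\|(\gamma+\omega)F_{v_{k-1}}(\hat x_{k-1})-\gamma F_{v_{k-2}}(\hat x_{k-2})\|^2$ \emph{exactly} and applies polarization again to extract
\[
(1+\alpha)L^2\gamma(\gamma+\omega)\,\Delta_{k-1}
\]
on the right. With the specific choice $\alpha=\tfrac{1}{2L^2\gamma(\gamma+\omega)}-\tfrac12$ this coefficient equals $\tfrac12(1+L^2\gamma(\gamma+\omega))<1$, so summing $\sum_k w_k\Delta_k$ closes. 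The two tracks are then combined via the same weights $w_k$, and the weights $w_{k-1}=(1+3C_3\omega\gamma\delta)w_k$ are chosen so that the $\delta\E\|x_k-x^*\|^2$ contributions telescope against the first track.

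Your plan---route $\gamma^2\|F_{v_{k-2}}(\hat x_{k-2})\|^2$ into a lagged slot of $\Phi_k$ using plain Young on $\|\hat x_k-\hat x_{k-1}\|^2$---is not obviously wrong, but it replaces the recursion on $\Delta_k$ by an attempt to absorb separate $\|F_{v_{k-1}}\|^2$ and $\|F_{v_{k-2}}\|^2$ pieces. The trouble is that crude Young on $\|\hat x_k-\hat x_{k-1}\|^2$ produces a coefficient of order $L^2(\gamma+\omega)^2$ in front of $\|F_{v_{k-1}}(\hat x_{k-1})\|^2$, which is not obviously dominated by the single available negative term $-\omega\gamma\|F_{v_{k-1}}(\hat x_{k-1})\|^2$ across the whole stated step-size window (in particular near $\gamma\uparrow 1/L$, where $\omega\to 0$). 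The paper's polarization trick avoids this by converting that quadratic into a $\Delta_{k-1}$ term with a contraction factor strictly below $1$; this is precisely what secures the sharp range $\rho<\tfrac{1}{2L}$. If you want to rescue the Lyapunov formulation, you would effectively need to include $\Delta_k$ itself (not just $\|F_{v_{k-1}}\|^2$ and $\|F_{v_{k-2}}\|^2$ separately) inside $\Phi_k$---at which point you have reinvented the paper's two-track argument.
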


\begin{proof}
    The proof closely follows the proof of Lemma C.3 and Theorem C.4 from \citep{gorbunov2022convergence}. The update rule of \algname{SPEG} implies for $k \geq 1$
    \begin{eqnarray*}
        \|x_{k+1} - x^*\|^2 &=& \|x_k - x^*\|^2 - 2\omega \langle x_k - x^*, F_{v_k}(\hx_k) \rangle + \omega^2 \|F_{v_k}(\hx_k)\|^2\\
        &=& \|x_k - x^*\|^2 - 2\omega \langle \hx_k - x^*, F_{v_k}(\hx_k) \rangle - 2\omega\gamma \langle F_{v_{k-1}}(\hx_{k-1}), F_{v_k}(\hx_k) \rangle \\
        && \quad + \omega^2 \|F_{v_k}(\hx_k)\|^2\\
        &=& \|x_k - x^*\|^2 - 2\omega \langle \hx_k - x^*, F_{v_k}(\hx_k) \rangle - \omega\gamma\|F_{v_{k-1}}(\hx_{k-1})\|^2\\
        &&\quad - \omega(\gamma - \omega)\|F_{v_k}(\hx_k)\|^2 + \omega \gamma \|F_{v_k}(\hx_k) - F_{v_{k-1}}(\hx_{k-1})\|^2,
    \end{eqnarray*}
    where in the last step we apply $2\langle a, b \rangle = \|a\|^2 + \|b\|^2 - \|a - b\|^2$, which holds for all $a,b \in \R^d$. Taking the full expectation and using $\E[\E_{v_k}[\cdot]] = \E[\cdot]$ and Weak Minty condition, we derive
    \begin{eqnarray}
        \E\left[\|x_{k+1} - x^*\|^2\right] &\leq& \E\left[\|x_k - x^*\|^2\right] - 2\omega \E\left[\langle \hx_k - x^*, F(\hx_k) \rangle\right] - \omega\gamma\E\left[\|F_{v_{k-1}}(\hx_{k-1})\|^2\right] \notag\\
        &&\quad - \omega(\gamma - \omega)\E\left[\|F_{v_k}(\hx_k)\|^2\right] + \omega \gamma \E\left[\|F_{v_k}(\hx_k) - F_{v_{k-1}}(\hx_{k-1})\|^2\right] \notag\\
        &\overset{\eqref{eq: weak MVI}}{\leq}& \E\left[\|x_k - x^*\|^2\right] + 2\omega\rho \E\left[\|F(\hx_k)\|^2\right] - \omega\gamma\E\left[\|F_{v_{k-1}}(\hx_{k-1})\|^2\right] \notag\\
        &&\quad - \omega(\gamma - \omega)\E\left[\|F_{v_k}(\hx_k)\|^2\right] + \omega \gamma \E\left[\|F_{v_k}(\hx_k) - F_{v_{k-1}}(\hx_{k-1})\|^2\right] \notag\\
        &\leq& \E\left[\|x_k - x^*\|^2\right] - \omega\gamma\E\left[\|F_{v_{k-1}}(\hx_{k-1})\|^2\right] \notag \\
        &&\quad - \omega(\gamma - 2\rho - \omega)\E\left[\|F_{v_k}(\hx_k)\|^2\right] \notag \\
        &&\quad + \omega \gamma \E\left[\|F_{v_k}(\hx_k) - F_{v_{k-1}}(\hx_{k-1})\|^2\right]\notag\\
        &\leq& \E\left[\|x_k - x^*\|^2\right] - \omega\gamma\E\left[\|F_{v_{k-1}}(\hx_{k-1})\|^2\right] \notag\\
        &&\quad  + \omega \gamma \E\left[\|F_{v_k}(\hx_k) - F_{v_{k-1}}(\hx_{k-1})\|^2\right], \label{eq:MVI_technical_ineq_1}
    \end{eqnarray}
    where we apply Jensen's inequality $\|F(\hx_k)\|^2 = \|\E_{v_k}[F_{v_k}(\hx_k)]\|^2 \leq \E_{v_k}[\|F_{v_k}(\hx_k)\|^2]$ and $\gamma > 2\rho + \omega$. For $k = 0$ we have $x_1 = x_0 - \omega F_{v_0}(\hx_0) = x_0 - \omega F_{v_0}(x_0)$ and
    \begin{eqnarray}
        \E\left[\|x_1 - x^*\|^2\right] &=& \|x_0 - x^*\|^2  - 2\omega \E\left[ \langle x_0 - x^*, F_{v_0}(x_0)  \rangle\right] + \omega^2 \E\left[\|F_{v_0}(x_0)\|^2\right] \notag\\
        &=& \|x_0 - x^*\|^2  - 2\omega  \langle x_0 - x^*, F(x_0)  \rangle+ \omega^2 \E\left[\|F_{v_0}(x_0)\|^2\right]. \notag
    \end{eqnarray}
    Applying Weak Minty condition, we get
    \begin{eqnarray}
        \E\left[\|x_1 - x^*\|^2\right] &=& \|x_0 - x^*\|^2  + 2\omega\rho  \|F(x_0)\|^2 + \omega^2 \E\left[\|F_{v_0}(x_0)\|^2\right] \notag\\
        &\leq& \|x_0 - x^*\|^2 + \omega(\omega + 2\rho)\E\left[\|F_{v_0}(x_0)\|^2\right]. \label{eq:MVI_technical_ineq_1_1}
    \end{eqnarray}
    
    \newpage 
    The next step of our proof is in estimating the last term from \eqref{eq:MVI_technical_ineq_1}. Using Young's inequality $\|a+b\|^2  \leq (1 + \alpha)\|a\|^2 + (1 + \alpha^{-1})\|b\|^2$, which holds for any $a,b \in \R^d$, $\alpha > 0$, we get for all $k \geq 2$
    \begin{eqnarray*}
        \E\left[\|F_{v_k}(\hx_k) - F_{v_{k-1}}(\hx_{k-1})\|^2\right] \hspace{-3mm} &\leq& \hspace{-3mm}(1+\alpha)\E\left[\|F(\hx_k) - F(\hx_{k-1})\|^2\right]\\
        && \hspace{-3mm}+ (1+\alpha^{-1})\E\left[\|F_{v_k}(\hx_k) - F(\hx_k) \right. \\
        && \hspace{-3mm}\quad \left. - (F_{v_{k-1}}(\hx_{k-1}) - F(\hx_{k-1}))\|^2\right]\\
        &\leq& \hspace{-3mm}(1+\alpha)L^2\E\left[\|\hx_k - \hx_{k-1}\|^2\right]\\
        &&\hspace{-3mm} \quad + 2(1+\alpha^{-1})\E\left[\|F_{v_k}(\hx_k) - F(\hx_k)\|^2 \right. \\
        && \hspace{-3mm} \quad \left. + \|F_{v_{k-1}}(\hx_{k-1}) - F(\hx_{k-1})\|^2\right]\\
        &\overset{\eqref{eq: variance bound}}{\leq}& \hspace{-3mm} (1+\alpha)L^2\E\left[\|\hx_k - x_k + x_k - x_{k-1} + x_{k-1} - \hx_{k-1}\|^2\right]\\
        &&\hspace{-3mm} \quad + 2(1+\alpha^{-1})\delta\E\left[\|\hx_k - x^*\|^2 + \|\hx_{k-1} - x^*\|^2\right] \\
        && \hspace{-3mm} \quad + 8(1+\alpha^{-1})\sigma_*^2\\
        &\leq& \hspace{-3mm}  (1+\alpha)L^2\E\left[\|(\gamma + \omega)F_{v_{k-1}}(\hx_{k-1}) - \gamma F_{v_{k-2}}(\hx_{k-2})\|^2\right]\\
        && \hspace{-3mm}\quad + 4(1+\alpha^{-1})\delta\E\left[\|x_k - x^*\|^2 + \|x_{k-1} - x^*\|^2\right]\\
        &&\hspace{-3mm} \quad + 4(1+\alpha^{-1})\delta\gamma^2\E\left[\|F_{v_{k-1}}(\hx_{k-1})\|^2 + \|F_{v_{k-2}}(\hx_{k-2})\|^2\right]\\
        &&\hspace{-3mm} \quad + 8(1+\alpha^{-1})\sigma_*^2\\
        &=&\hspace{-3mm} (1+\alpha)L^2(\gamma + \omega)^2\E\left[\|F_{v_{k-1}}(\hx_{k-1})\|^2\right] \\
        && \hspace{-3mm}\quad + (1+\alpha)L^2\gamma^2\E\left[\|F_{v_{k-2}}(\hx_{k-2})\|^2\right]\\
        &&\hspace{-3mm}\quad - 2(1+\alpha)L^2\gamma(\gamma+\omega)\E\left[\langle F_{v_{k-1}}(\hx_{k-1}), F_{v_{k-2}}(\hx_{k-2}) \rangle\right]\\
        &&\hspace{-3mm}\quad + 4(1+\alpha^{-1})\delta\E\left[\|x_k - x^*\|^2 + \|x_{k-1} - x^*\|^2\right]\\
        &&\hspace{-3mm}\quad + 4(1+\alpha^{-1})\delta\gamma^2\E\left[\|F_{v_{k-1}}(\hx_{k-1})\|^2 + \|F_{v_{k-2}}(\hx_{k-2})\|^2\right] \\
        &&\hspace{-3mm} \quad + 8(1+\alpha^{-1})\sigma_*^2\\
        &=&\hspace{-3mm} (1+\alpha)L^2(\gamma + \omega)^2\E\left[\|F_{v_{k-1}}(\hx_{k-1})\|^2\right] \\
        &&\hspace{-3mm} \quad + (1+\alpha)L^2\gamma^2\E\left[\|F_{v_{k-2}}(\hx_{k-2})\|^2\right]\\
        &&\hspace{-3mm}\quad - (1+\alpha)L^2\gamma(\gamma+\omega)\E\left[\| F_{v_{k-1}}(\hx_{k-1})\|^2 + \|F_{v_{k-2}}(\hx_{k-2})\|^2\right]\\
        &&\hspace{-3mm}\quad + (1+\alpha)L^2\gamma(\gamma+\omega)\E\left[\|F_{v_{k-1}}(\hx_{k-1}) - F_{v_{k-2}}(\hx_{k-2})\|^2\right]\\
        &&\hspace{-3mm}\quad + 4(1+\alpha^{-1})\delta\E\left[\|x_k - x^*\|^2 + \|x_{k-1} - x^*\|^2\right]\\
        &&\hspace{-3mm}\quad + 4(1+\alpha^{-1})\delta\gamma^2\E\left[\|F_{v_{k-1}}(\hx_{k-1})\|^2 + \|F_{v_{k-2}}(\hx_{k-2})\|^2\right] \\
        && \hspace{-3mm} \quad + 8(1+\alpha^{-1})\sigma_*^2\\
        &=& \hspace{-3mm} (1+\alpha)L^2\omega(\gamma + \omega)\E\left[\|F_{v_{k-1}}(\hx_{k-1})\|^2\right]\\
        && \hspace{-3mm}\quad - (1+\alpha)L^2\gamma\omega\E\left[\|F_{v_{k-2}}(\hx_{k-2})\|^2\right]\\
        && \hspace{-3mm}\quad + (1+\alpha)L^2\gamma(\gamma+\omega)\E\left[\|F_{v_{k-1}}(\hx_{k-1}) - F_{v_{k-2}}(\hx_{k-2})\|^2\right]\\
        && \hspace{-3mm}\quad + 4(1+\alpha^{-1})\delta\E\left[\|x_k - x^*\|^2 + \|x_{k-1} - x^*\|^2\right]\\
        && \hspace{-3mm}\quad + 4(1+\alpha^{-1})\delta\gamma^2\E\left[\|F_{v_{k-1}}(\hx_{k-1})\|^2 + \|F_{v_{k-2}}(\hx_{k-2})\|^2\right]\\
        && \hspace{-3mm} \quad + 8(1+\alpha^{-1})\sigma_*^2.
    \end{eqnarray*}
    Since $\hx_0 = x_0$ and $\hx_1 = x_1 - \gamma F_{v_0}(x_0) = x_0 - (\gamma + \omega)F_{v_0}(x_0)$, for $k = 1$ we have
    \begin{eqnarray*}
        \E\left[\|F_{v_1}(\hx_1) - F_{v_{0}}(\hx_{0})\|^2\right] &=& \E\left[\|F_{v_1}(\hx_1) - F_{v_{0}}(x_{0})\|^2\right]\\
        &\leq& (1+\alpha)\E\left[\|F(\hx_1) - F(x_{0})\|^2\right]\\
        &&\quad + (1+\alpha^{-1})\E\left[\|F_{v_1}(\hx_1) - F(\hx_1) - (F_{v_{0}}(x_{0}) - F(x_{0}))\|^2\right]\\
        &\leq& (1+\alpha)L^2\E\left[\|\hx_1 - x_0\|^2\right]\\
        &&\quad + 2(1+\alpha^{-1})\E\left[\|F_{v_1}(\hx_1) - F(\hx_1)\|^2 + \|F_{v_{0}}(x_{0}) - F(x_{0})\|^2\right]
    \end{eqnarray*}
    Then using \eqref{eq: variance bound} we get,
    \begin{eqnarray*}
        \E\left[\|F_{v_1}(\hx_1) - F_{v_{0}}(\hx_{0})\|^2\right]
        &\overset{\eqref{eq: variance bound}}{\leq}& (1+\alpha)L^2(\gamma+\omega)^2\E\left[\|F_{v_0}(x_0)\|^2\right]\\
        &&\quad + 2(1+\alpha^{-1})\delta \E\left[\|\hx_1 - x^*\|^2 + \|x_0 - x^*\|^2\right] + 8(1+\alpha)\sigma_*^2\\
        &\leq& \left((1+\alpha)L^2 + 4(1+\alpha^{-1})\delta\right)(\gamma+\omega)^2\E\left[\|F_{v_0}(x_0)\|^2\right]\\
        &&\quad + 6(1+\alpha^{-1})\delta \|x_0 - x^*\|^2 + 8(1+\alpha)\sigma_*^2.
    \end{eqnarray*}
    Let $\{w_k\}_{k=0}^{K-1}$ be a non-increasing sequence of positive numbers that will be specified later and $W_K = \sum_{k=0}^{K-1} w_k$. Summing up the above two inequalities with weights $\{w_k\}_{k=1}^{K-1}$, we derive
    \begin{eqnarray*}
        \sum\limits_{k=1}^{K-1}w_k \E\left[\|F_{v_k}(\hx_k) - F_{v_{k-1}}(\hx_{k-1})\|^2\right] \leq && \hspace{-6.5mm} (1+\alpha)L^2\sum\limits_{k=1}^{K-3}\left(\omega(\gamma + \omega)w_{k+1} \E\left[\|F_{v_{k}}(\hx_{k})\|^2\right] \right. \\
        && \hspace{-6.5mm} \left. -\gamma\omega w_{k+2}\E\left[\|F_{v_{k}}(\hx_{k})\|^2\right]\right)\\
        && \hspace{-6.5mm} + (1+\alpha)L^2\omega(\gamma + \omega)w_{K-1}\E\left[\|F_{v_{K-2}}(\hx_{K-2})\|^2\right]\\
        && \hspace{-6.5mm} - (1+\alpha)L^2 \gamma\omega w_2 \E\left[\|F_{v_{0}}(x_{0})\|^2\right]\\
        && \hspace{-6.5mm} + (1+\alpha)L^2\gamma(\gamma+\omega)\sum\limits_{k=1}^{K-2}w_{k+1}\E\left[\|F_{v_k}(\hx_k) \right. \\
        && \hspace{-6.5mm} \left. - F_{v_{k-1}}(\hx_{k-1})\|^2\right] + 4(1+\alpha^{-1})\delta\sum\limits_{k=2}^{K-1}w_k\E\left[\|x_k - x^*\|^2 \right] \\
        && \hspace{-6.5mm} + w_k\E \left[ \|x_{k-1} - x^*\|^2\right] \\
        && \hspace{-6.5mm} + 4(1+\alpha^{-1})\delta\gamma^2\sum\limits_{k=1}^{K-2}w_{k+1}\E\left[\|F_{v_{k}}(\hx_{k})\|^2 \right. \\
        && \hspace{-6.5mm} \left. + \|F_{v_{k-1}}(\hx_{k-1})\|^2\right] + 8(1+\alpha^{-1})(W_K - w_0 - w_1)\sigma_*^2\\
        && \hspace{-6.5mm} + \left((1+\alpha)L^2 + 4(1+\alpha^{-1})\delta\right)(\gamma+\omega)^2w_1\E\left[\|F_{v_0}(x_0)\|^2\right]\\
        && \hspace{-6.5mm} + 6(1+\alpha^{-1})\delta w_1 \|x_0 - x^*\|^2 + 8(1+\alpha)w_1\sigma_*^2.
    \end{eqnarray*}
    Next, we rearrange the terms using $w_k \geq w_{k+1}$ and new notation $\Delta_k = \E\left[\|F_{v_k}(\hx_k) - F_{v_{k-1}}(\hx_{k-1})\|^2\right]$:
    \begin{eqnarray*}
        \left(1 - (1+\alpha)L^2\gamma(\gamma+\omega)\right)\sum\limits_{k=1}^{K-1}w_k\Delta_k \hspace{-3.5mm}&\leq& \hspace{-3.5mm} \sum\limits_{k=1}^{K-2}(1+\alpha)L^2\omega(\gamma+\omega) w_k\E\left[\|F_{v_{k}}(\hx_{k})\|^2\right]\\
        && \hspace{-3mm} + 8(1+\alpha^{-1})\delta\gamma^2 w_k\E\left[\|F_{v_{k}}(\hx_{k})\|^2\right]\\
        && \hspace{-3mm}+ \left((1+\alpha)L^2 + 8(1+\alpha^{-1})\delta\right)(\gamma+\omega)^2w_0\E\left[\|F_{v_0}(x_0)\|^2\right]\\
        && \hspace{-3mm}+ 12(1+\alpha^{-1})\delta\sum\limits_{k=1}^{K-1}w_k\E\left[\|x_k - x^*\|^2\right] \\
        && \hspace{-3mm} + 8(1+\alpha^{-1})(W_K - w_0)\sigma_*^2.
    \end{eqnarray*}
    To simplify the above inequality we choose $\alpha = \frac{1}{2L^2\gamma(\gamma+\omega)} - \frac{1}{2}$, which is positive due to $\gamma < \nicefrac{1}{L}$ and $\gamma+\omega < \nicefrac{1}{L}$. In this case, we have
    \begin{eqnarray*}
        (1+\alpha)L^2\gamma(\gamma + \omega) &=& \frac{1}{2}L^2\gamma(\gamma + \omega) + \frac{1}{2},\\
        (1+\alpha)L^2(\gamma + \omega)^2 &=& \frac{1}{2}L^2(\gamma + \omega)^2 + \frac{\gamma+\omega}{2\gamma} \leq \frac{3}{2},\\
        (1+\alpha)L^2\omega (\gamma + \omega) &=& \frac{1}{2}L^2\omega(\gamma + \omega) + \frac{\omega}{2\gamma} = \frac{L\omega}{2}\left(L(\gamma + \omega) + \frac{1}{\gamma L}\right) \leq \frac{3 L\omega}{2},\\
        1 + \alpha^{-1} &=& 1 + \frac{2L^2\gamma (\gamma + \omega)}{1 - L^2\gamma (\gamma + \omega)} = \frac{1 + L^2\gamma (\gamma + \omega)}{1 - L^2\gamma (\gamma + \omega)} \leq \frac{2}{1 - L^2\gamma (\gamma + \omega)},
    \end{eqnarray*}
    where we also use $\nicefrac{1}{2L} < \gamma < \nicefrac{1}{L}$ and $\gamma + \omega < \nicefrac{1}{L}$. Using these relations, we can continue our derivation as follows:
    \begin{eqnarray*}
        \frac{1}{2}\left(1 - L^2\gamma(\gamma+\omega)\right)\sum\limits_{k=1}^{K-1}w_k\Delta_k &\leq& \sum\limits_{k=1}^{K-2}\left(\frac{3L\omega}{2} + \frac{16}{1 - L^2\gamma (\gamma + \omega)}\delta\gamma^2\right) w_k\E\left[\|F_{v_{k}}(\hx_{k})\|^2\right]\\
        &&\quad + \left(\frac{3}{2} + \frac{16}{1 - L^2\gamma (\gamma + \omega)}\delta(\gamma+\omega)^2\right)w_0\E\left[\|F_{v_0}(x_0)\|^2\right]\\
        &&\quad + \frac{24}{1 - L^2\gamma (\gamma + \omega)}\delta\sum\limits_{k=1}^{K-1}w_k\E\left[\|x_k - x^*\|^2\right]\\
        && \quad + \frac{16}{1 - L^2\gamma (\gamma + \omega)}(W_K - w_0)\sigma_*^2.
    \end{eqnarray*}
    Dividing both sides by $\frac{1}{2}\left(1 - L^2\gamma(\gamma+\omega)\right)$, we derive
    \begin{eqnarray}
        \sum\limits_{k=1}^{K-1}w_k\Delta_k &\leq& \sum\limits_{k=1}^{K-2}\left(\frac{3L\omega}{1 - L^2\gamma (\gamma + \omega)} + \frac{32}{(1 - L^2\gamma (\gamma + \omega))^2}\delta\gamma^2\right) w_k\E\left[\|F_{v_{k}}(\hx_{k})\|^2\right]\notag\\
        &&\quad + \left(\frac{3}{1 - L^2\gamma (\gamma + \omega)} + \frac{32}{(1 - L^2\gamma (\gamma + \omega))^2}\delta(\gamma+\omega)^2\right)w_0\E\left[\|F_{v_0}(x_0)\|^2\right]\notag\\
        &&\quad + \frac{48}{(1 - L^2\gamma (\gamma + \omega))^2}\delta\sum\limits_{k=1}^{K-1}w_k\E\left[\|x_k - x^*\|^2\right] \notag\\
        && \quad + \frac{32}{(1 - L^2\gamma (\gamma + \omega))^2}(W_K - w_0)\sigma_*^2 \notag\\
        &=& \sum\limits_{k=1}^{K-2}C_1 w_k\E\left[\|F_{v_{k}}(\hx_{k})\|^2\right] + C_2 w_0\E\left[\|F_{v_0}(x_0)\|^2\right]\notag\\
        &&\quad + 3C_3\delta\sum\limits_{k=1}^{K-1}w_k\E\left[\|x_k - x^*\|^2\right] + 2C_3W_K\sigma_*^2, \label{eq:MVI_technical_ineq_2}
    \end{eqnarray}
    where $C_1 = \frac{3L\omega}{1 - L^2\gamma (\gamma + \omega)} + \frac{32}{(1 - L^2\gamma (\gamma + \omega))^2}\delta\gamma^2$, $C_2 = \frac{3}{1 - L^2\gamma (\gamma + \omega)} + \frac{32}{(1 - L^2\gamma (\gamma + \omega))^2}\delta(\gamma+\omega)^2$, and $C_3 = \frac{16}{(1 - L^2\gamma (\gamma + \omega))^2}$. Summing up inequalities \eqref{eq:MVI_technical_ineq_1} for $k = 1,\ldots, K-1$ with weights $w_1,\ldots, w_{K-1}$ and \eqref{eq:MVI_technical_ineq_1_1} with weight $w_0$, we get
    \begin{eqnarray*}
        \sum\limits_{k=0}^{K-1}w_k\E\left[\|x_{k+1} - x^*\|^2\right] &\leq& \sum\limits_{k=0}^{K-1}w_k\E\left[\|x_{k} - x^*\|^2\right] - \omega\gamma \sum\limits_{k=1}^{K-1}w_k \E\left[\|F_{v_{k-1}}(\hx_{k-1})\|^2\right]\\
        &&\quad  + \omega\gamma \sum\limits_{k=1}^{K-1}w_k\Delta_k  + \omega (\omega + 2\rho) w_0\E\left[\|F_{v_0}(x_0)\|^2\right].
    \end{eqnarray*}
    Since $w_k \geq w_{k+1}$, we can continue the derivation as follows:
    \begin{eqnarray*}
        \sum\limits_{k=0}^{K-1}w_k\E\left[\|x_{k+1} - x^*\|^2\right] &\leq& \sum\limits_{k=0}^{K-1}w_k\E\left[\|x_{k} - x^*\|^2\right] - \omega\gamma \sum\limits_{k=0}^{K-2}w_{k} \E\left[\|F_{v_{k}}(\hx_{k})\|^2\right]\\
        &&\quad  + \omega\gamma \sum\limits_{k=1}^{K-1}w_k\Delta_k  + \omega (\omega + 2\rho) w_0\E\left[\|F_{v_0}(x_0)\|^2\right]\\
        &\overset{\eqref{eq:MVI_technical_ineq_2}}{\leq}& \sum\limits_{k=0}^{K-1}(1+ 3C_3\omega\gamma \delta)w_k\E\left[\|x_{k} - x^*\|^2\right]\\
        && \quad - \omega\gamma (1 - C_1)\sum\limits_{k=0}^{K-2}w_{k} \E\left[\|F_{v_{k}}(\hx_{k})\|^2\right]\\
        &&\quad + 2\omega\gamma C_2 w_0 \E\left[\|F_{v_{0}}(\hx_{0})\|^2\right] + 2\omega\gamma C_3 W_K \sigma_*^2.
    \end{eqnarray*}
    Now we need to specify the weights $w_{-1}, w_0, w_1, \ldots, w_{K-1}$. Let $w_{K-2} = 1$ and $w_{k-1} = (1+ 3C_3\omega\gamma \delta)w_k$. Then, rearranging the terms, dividing both sides by $\omega\gamma (1 - C_1)W_{K-1}$, we get
    \begin{eqnarray*}
        \min\limits_{0\leq k \leq K-1}\E\left[\|F(\hat x_k)\|^2\right] &\leq& \min\limits_{0\leq k \leq K-1}\E\left[\|F_{v_k}(\hat x_k)\|^2\right]\\
        &\leq& \sum\limits_{k=0}^{K-2}\frac{w_k}{W_{K-1}}\E\left[\|F_{v_{k}}(\hx_{k})\|^2\right]\\
        &\leq& \frac{1}{\omega\gamma (1 - C_1) W_{K-1}}\sum\limits_{k=0}^{K-1}\left(w_{k-1}\E\left[\|x_{k} - x^*\|^2\right] \right.\\
        && \quad \left. - w_k\E\left[\|x_{k+1} - x^*\|^2\right]\right) + \frac{2C_2 w_0 \E\left[\|F_{v_{0}}(\hx_{0})\|^2\right]}{(1-C_1)W_{K-1}} \\
        && \quad + \frac{2C_3 W_K \sigma_*^2}{(1-C_1)W_{K-1}}\\
        &\leq&  \frac{w_{-1}\|x_0 - x^*\|^2}{\omega\gamma (1 - C_1) W_{K-1}} + \frac{2C_2 w_0 \E\left[\|F_{v_{0}}(\hx_{0})\|^2\right]}{(1-C_1)W_{K-1}} + \frac{2C_3 W_K \sigma_*^2}{(1-C_1)W_{K-1}}.
    \end{eqnarray*}
    It remains to simplify the right-hand side of the above inequality. First, we notice that $W_{K-1} = \sum_{k=0}^{K-2} w_k \geq (K-1)w_{K-2} = K-1$ since $w_{k} \geq w_{k+1}$. Moreover, $w_{-1} = (1 + 3C_3 \omega\gamma\delta)^{K-1}$. Next,
    \begin{eqnarray*}
        C_1 &=& \frac{3L\omega}{1 - L^2\gamma (\gamma + \omega)} + \frac{32}{(1 - L^2\gamma (\gamma + \omega))^2}\delta\gamma^2\\
        &\leq& \frac{3L\omega}{1 - L\gamma} + \frac{32}{(1 - L\gamma)^2} \cdot \frac{(1-L\gamma)L^3\omega}{32} \cdot \gamma^2 \leq \frac{4L\omega}{1 - L\gamma},\\
        C_2 &=& \frac{3}{1 - L^2\gamma (\gamma + \omega)} + \frac{32}{(1 - L^2\gamma (\gamma + \omega))^2}\delta(\gamma+\omega)^2\\
        &\leq& \frac{3}{1 - L\gamma} + \frac{32}{(1 - L\gamma)^2}\cdot \frac{(1-L\gamma)L^3\omega}{32} \cdot (\gamma+\omega)^2 \leq \frac{4}{1 - L\gamma},\\
        C_3 &=& \frac{16}{(1 - L^2\gamma (\gamma + \omega))^2} \leq \frac{16}{(1 - L\gamma)^2},
    \end{eqnarray*}
    where we use $\delta \leq \nicefrac{(1-L\gamma)L^3\omega}{16}$ and $\gamma + \omega < \nicefrac{1}{L}$. Using these inequalities, we simplify the bound as follows:
    \begin{eqnarray}
        \min\limits_{0\leq k \leq K-1}\E\left[\|F(\hat x_k)\|^2\right] &\leq& \frac{(1 - L\gamma)(1+3C_3\omega\gamma \delta)^{K-1}\|x_0 - x^*\|^2}{\omega\gamma (1 - L(\gamma + 4\omega)) (K-1)} \notag \\
        && \quad + \frac{8 (1+3C_3\omega\gamma \delta)^{K-2} \E\left[\|F_{v_{0}}(\hx_{0})\|^2\right]}{(1 - L(\gamma + 4\omega))(K-1)} \notag \\
        &&\quad + \frac{32 \sigma_*^2}{(1-L\gamma)(1-L(\gamma + 4\omega))} \notag\\
        &\leq& \frac{(1 - L\gamma)\left(1+\frac{48\omega\gamma \delta}{(1-L\gamma)^2}\right)^{K-1}\|x_0 - x^*\|^2}{\omega\gamma (1 - L(\gamma + 4\omega)) (K-1)} \notag \\
        && \quad + \frac{8 \left(1+\frac{48\omega\gamma \delta}{(1-L\gamma)^2}\right)^{K-2} \E\left[\|F_{v_{0}}(\hx_{0})\|^2\right]}{(1 - L(\gamma + 4\omega))(K-1)} \notag\\
        &&\quad + \frac{32 \sigma_*^2}{(1-L\gamma)(1-L(\gamma + 4\omega))} \label{eq:MVI_technical_ineq_3}
    \end{eqnarray}
    where we use $W_{K} = W_{K-1} + w_{K-1} \leq W_{K-1} + w_{K-2} \leq 2W_{K-1}$. Finally, we use \eqref{eq: variance bound} to upper-bound $\E\left[\|F_{v_{0}}(\hx_{0})\|^2\right]$:
    \begin{eqnarray*}
        \E\left[\|F_{v_{0}}(\hx_{0})\|^2\right] &=& \E\left[\|F_{v_{0}}(x_{0})\|^2\right] \overset{\eqref{eq: variance bound}}{\leq} \delta \|x_0 - x^*\|^2 + \|F(x_0)\|^2 + 2\sigma_*^2\\
        &\leq& (\delta + L^2)\|x_0 - x^*\|^2 + 2\sigma_*^2.
    \end{eqnarray*}
    Plugging this inequality in \eqref{eq:MVI_technical_ineq_3}, we derive
    \begin{eqnarray*}
        \min\limits_{0\leq k \leq K-1}\E\left[\|F(\hat x_k)\|^2\right] &\leq& \frac{(1 + 8\omega\gamma (\delta + L^2) - L\gamma)\left(1+\frac{48\omega\gamma \delta}{(1-L\gamma)^2}\right)^{K-1}\|x_0 - x^*\|^2}{\omega\gamma (1 - L(\gamma + 4\omega)) (K-1)}\\
        &&\quad + \frac{4 \left(8 + \frac{1-L\gamma}{K-1}\left(1+\frac{48\omega\gamma \delta}{(1-L\gamma)^2}\right)^{K-1}\right)\sigma_*^2}{(1-L\gamma)(1-L(\gamma + 4\omega))},
    \end{eqnarray*}
    which concludes the proof.
\end{proof}

\subsection{Proof of Theorem \ref{cor:weak_MVI_convergence}}
\label{AppendixE5}

\begin{theorem}\label{cor:weak_MVI_convergence_appendix}
    Let $F$ be $L$-Lipschitz and satisfy Weak Minty condition with parameter $\rho < \nicefrac{1}{(2L)}$. Assume that inequality \eqref{eq: variance bound} holds (e.g., it holds whenever Assumption~\ref{as:expected_residual} holds, see Lemma~\ref{Lemma: variance bound}). Assume that $\gamma_k = \gamma$, $\omega_k = \omega$ and
    \begin{equation*}
        \max\left\{2\rho, \frac{1}{2L}\right\} < \gamma < \frac{1}{L},\quad 0 < \omega < \min\left\{\gamma - 2\rho, \frac{1}{4L} - \frac{\gamma}{4}\right\}.
    \end{equation*}
    Then, for all $K \geq 2$ the iterates produced by mini-batched \algname{SPEG} with batch-size 
    \begin{equation}
        \tau \geq \max\left\{1, \frac{32\delta}{(1-L\gamma)L^3\omega}, \frac{48\omega\gamma \delta(K-1)}{(1 - L\gamma)^2}, \frac{2\omega\gamma\sigma_*^2(K-1)}{(1-L\gamma)\|x_0 - x^*\|^2}\right\} \label{eq:SPEG_weak_MVI_batchsize_appendix}
    \end{equation}
    satisfy
    \begin{eqnarray}
        \min\limits_{0\leq k \leq K-1}\E\left[\|F(\hat x_k)\|^2\right] \leq \frac{48\|x_0 - x^*\|^2}{\omega\gamma (1 - L(\gamma + 4\omega)) (K-1)}. \label{eq:SPEG_weak_MVI_result_corollary_appendix}
    \end{eqnarray}
\end{theorem}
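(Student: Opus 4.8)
The plan is to obtain Theorem~\ref{cor:weak_MVI_convergence_appendix} as a corollary of Theorem~\ref{thm:weak_MVI}: pass from the single-sample estimator to the batch-size-$\tau$ one, and choose $\tau$ so that the error terms of Theorem~\ref{thm:weak_MVI} collapse into the advertised $\cO(1/(K-1))$ bound with the stated constant.

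First I would record how mini-batching rescales the two problem-dependent constants. If the base estimator satisfies \ref{eq: ER Condition} with parameter $\delta$ and $\E\|g(x^*)\|^2=\sigma_*^2$, then the batch-size-$\tau$ estimator satisfies \ref{eq: ER Condition} with parameter at most $\delta/\tau$ and has optimum noise at most $\sigma_*^2/\tau$; for $\tau$-minibatch sampling this follows from Proposition~\ref{Prop_SufficientCondition} once we note $\tfrac{n-\tau}{n-1}\le1$, and for averages of independent unbiased samples it is the standard variance-reduction fact. Since Theorem~\ref{thm:weak_MVI} uses only \eqref{eq: variance bound} and its conclusion is monotone in $(\delta,\sigma_*^2)$, it therefore applies to mini-batched \algname{SPEG} with $(\delta,\sigma_*^2)$ replaced by $(\delta/\tau,\sigma_*^2/\tau)$, provided its hypothesis $\delta/\tau\le\tfrac{(1-L\gamma)L^3\omega}{32}$ holds; this is exactly the second entry of the lower bound \eqref{eq:SPEG_weak_MVI_batchsize_appendix}, while the restrictions on $\gamma,\omega,\rho$ are identical in the two statements and the entry $\tau\ge1$ only makes mini-batching well defined.

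Next I would substitute $(\delta/\tau,\sigma_*^2/\tau)$ into the bound obtained inside the proof of Theorem~\ref{thm:weak_MVI}, specifically into the intermediate estimate \eqref{eq:MVI_technical_ineq_3} together with $\E\|F_{v_0}(\hat x_0)\|^2\le(\delta/\tau+L^2)\|x_0-x^*\|^2+2\sigma_*^2/\tau$, rather than into the fully simplified display \eqref{eq:SPEG_weak_MVI_result_appendix}. The remaining two entries of \eqref{eq:SPEG_weak_MVI_batchsize_appendix} are calibrated precisely for this: $\tau\ge\tfrac{48\omega\gamma\delta(K-1)}{(1-L\gamma)^2}$ forces $\tfrac{48\omega\gamma\delta}{\tau(1-L\gamma)^2}\le\tfrac{1}{K-1}$, so the geometric blow-up factor satisfies $\bigl(1+\tfrac{48\omega\gamma\delta}{\tau(1-L\gamma)^2}\bigr)^{K-1}\le\bigl(1+\tfrac1{K-1}\bigr)^{K-1}\le e$; and $\tau\ge\tfrac{2\omega\gamma\sigma_*^2(K-1)}{(1-L\gamma)\|x_0-x^*\|^2}$ forces $\tfrac{\sigma_*^2}{\tau}\le\tfrac{(1-L\gamma)\|x_0-x^*\|^2}{2\omega\gamma(K-1)}$, which turns every $\sigma_*^2$-term into a multiple of $\tfrac{\|x_0-x^*\|^2}{\omega\gamma(K-1)}$ --- the factor $(1-L\gamma)$ in this fourth entry being exactly what cancels the $(1-L\gamma)$ in the denominator of the last term of \eqref{eq:MVI_technical_ineq_3}. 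After these reductions each surviving piece has the form $c\,\|x_0-x^*\|^2/\bigl(\omega\gamma(1-L(\gamma+4\omega))(K-1)\bigr)$, and bounding the stray numerical factors by the step-size constraints (from $\omega<\tfrac{1-L\gamma}{4L}$ and $\tfrac1{2L}<\gamma<\tfrac1L$ one gets $\omega\gamma L^2<\tfrac1{16}$ and $\delta/\tau<L^2$, hence $8\omega\gamma(\delta/\tau+L^2)<1$ and $1-L\gamma<1$) leaves a total constant strictly below $48$.

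The main obstacle is not conceptual but the constant bookkeeping in this last step: one must keep the tighter form of the $\sigma_*^2$-terms (denominator $(1-L\gamma)(1-L(\gamma+4\omega))$ rather than $(1-L\gamma)^2(1-L(\gamma+4\omega))$) and use $W_K\le2W_{K-1}$, since working from the published display \eqref{eq:SPEG_weak_MVI_result_appendix} would leave an uncancelled factor $\tfrac1{1-L\gamma}$, which is $>2$ under the step-size constraints and would overshoot $48$. Everything else is a mechanical substitution into an already-proved theorem.
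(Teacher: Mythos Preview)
Your proposal is correct and follows essentially the same route as the paper: reduce $(\delta,\sigma_*^2)$ to $(\delta/\tau,\sigma_*^2/\tau)$ by mini-batching, invoke Theorem~\ref{thm:weak_MVI} under the constraint $\tau\ge\tfrac{32\delta}{(1-L\gamma)L^3\omega}$, and use the remaining two entries of \eqref{eq:SPEG_weak_MVI_batchsize_appendix} to control the geometric factor by $e<3$ and to absorb the $\sigma_*^2$-term into the $\|x_0-x^*\|^2/(K-1)$ budget. Your observation that one must plug into the sharper intermediate bound \eqref{eq:MVI_technical_ineq_3} (denominator $(1-L\gamma)(1-L(\gamma+4\omega))$) rather than the stated conclusion \eqref{eq:SPEG_weak_MVI_result_appendix} is exactly what the paper does implicitly: the display \eqref{eq:MVI_technical_ineq_4} it writes down is that sharper form with $(\delta/\tau,\sigma_*^2/\tau)$ substituted, not a literal instantiation of \eqref{eq:SPEG_weak_MVI_result_appendix}.
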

\begin{proof}
    Mini-batched \algname{SPEG} uses estimator
    \begin{eqnarray*}
        F_{v_k}(\hx_k) = \frac{1}{\tau}\sum\limits_{i=1}^{\tau} F_{v_{k,i}}(\hx_k),
    \end{eqnarray*}
    where $F_{v_{k,1}}(\hx_k),\ldots, F_{v_{k,\tau}}(\hx_k)$ are independent samples satisfying \eqref{eq: variance bound} with parameters $\delta$ and $\sigma_*^2$. Using variance decomposition and independence of $F_{v_{k,1}}(\hx_k),\ldots, F_{v_{k,\tau}}(\hx_k)$, we get
    \begin{eqnarray*}
        \E_{v_k}\left[\|F_{v_k}(\hx_k)\|^2\right] &=& \E_{v_k}\left[\|F_{v_k}(\hx_k) - F(\hx_k)\|^2\right] + \|F(\hx_k)\|^2\\
        &=& \E_{v_k}\left[\left\|\frac{1}{\tau}\sum\limits_{i=1}^b (F_{v_{k,i}}(\hx_k) - F(\hx_k))\right\|^2\right] + \|F(\hx_k)\|^2\\
        &=& \frac{1}{\tau^2}\sum\limits_{i=1}^{\tau}\E_{v_{k}}\left[\|F_{v_{k,i}}(\hx_k) - F(\hx_k)\|^2\right] + \|F(\hx_k)\|^2\\
        &\overset{\eqref{eq: variance bound}}{\leq}& \frac{\delta}{\tau}\|\hx_k - x^*\|^2 + \|F(\hx_k)\|^2 + \frac{2\sigma_*^2}{\tau}. 
    \end{eqnarray*}
    That is, mini-batched estimator $F_{v_k}(\hx_k)$ satisfies \eqref{eq: variance bound} with parameters $\nicefrac{\delta}{\tau}$ and $\nicefrac{\sigma_*^2}{\tau}$. Therefore, Theorem~\ref{thm:weak_MVI} implies
    \begin{eqnarray}
        \min\limits_{0\leq k \leq K-1}\E\left[\|F(\hat x_k)\|^2\right] &\leq& \frac{(1 + 4\omega\gamma \left(\frac{\delta}{\tau} + L^2\right) - L\gamma)\left(1+\frac{48\omega\gamma \delta}{(1-L\gamma)^2\tau}\right)^{K-1}\|x_0 - x^*\|^2}{\omega\gamma (1 - L(\gamma + 4\omega)) (K-1)} \notag\\
        &&\quad + \frac{8 \left(8 + \frac{1-L\gamma}{K-1}\left(1+\frac{48\omega\gamma \delta}{(1-L\gamma)^2\tau}\right)^{K-1}\right) \sigma_*^2}{(1-L\gamma)(1-L(\gamma + 4\omega))\tau}. \label{eq:MVI_technical_ineq_4}
    \end{eqnarray}
    Since $\tau$ satisfies \eqref{eq:SPEG_weak_MVI_batchsize_appendix} and $\gamma \leq \nicefrac{1}{L}$, $\omega \leq \nicefrac{1}{4L}$, we have
    \begin{eqnarray*}
        4\omega\gamma\left(\frac{\delta}{\tau} + L^2\right) &\leq& \frac{1}{4L^2}\left(\delta \cdot \frac{(1-L\gamma)L^3\omega}{16\delta} + L^2\right) \leq 1,\\
        \left(1+\frac{48\omega\gamma \delta}{(1-L\gamma)^2\tau}\right)^{K-1} &\leq& \left(1+\frac{48\omega\gamma \delta}{(1-L\gamma)^2} \cdot \frac{(1 - L\gamma)^2}{48\omega\gamma \delta(K-1)}\right)^{K-1} \\
        && = \left(1 + \frac{1}{K-1}\right)^{K-1} \leq \exp(1) < 3.
    \end{eqnarray*}
    Using this, we can simplify \eqref{eq:MVI_technical_ineq_4} as follows:
    \begin{eqnarray*}
        \min\limits_{0\leq k \leq K-1}\E\left[\|F(\hat x_k)\|^2\right] &\leq& \frac{6\|x_0 - x^*\|^2}{\omega\gamma (1 - L(\gamma + 4\omega)) (K-1)} + \frac{88 \sigma_*^2}{(1-L\gamma)(1-L(\gamma + 4\omega))\tau}\\
        &\overset{\eqref{eq:SPEG_weak_MVI_batchsize_appendix}}{\leq}& \frac{6\|x_0 - x^*\|^2}{\omega\gamma (1 - L(\gamma + 4\omega)) (K-1)} \\
        && \quad + \frac{88 \sigma_*^2}{(1-L\gamma)(1-L(\gamma + 4\omega))} \cdot \frac{(1-L\gamma)\|x_0 - x^*\|^2}{2\omega\gamma\sigma_*^2}\\
        &=& \frac{48\|x_0 - x^*\|^2}{\omega\gamma (1 - L(\gamma + 4\omega)) (K-1)}.
    \end{eqnarray*}
    This concludes the proof.
\end{proof}

\paragraph{On Oracle Complexity of Theorem \ref{cor:weak_MVI_convergence}.} Let us now express the result of Theorem~\ref{cor:weak_MVI_convergence} via oracle complexity.

Oracle complexity captures the computational requirements required to solve a specific optimization problem. That is, given a prespecified accuracy $\varepsilon > 0$, it measures the number of oracle calls needed to solve the problem to this $\varepsilon$ accuracy. In our setting,  an oracle call indicates the computation of one operator, $F_i$ (for some $i \in [n]$). Therefore, in Theorem \ref{cor:weak_MVI_convergence}, where a mini-batch of size $\tau$ is required in each iteration of the update rule, we have $\tau$ many oracle calls per iteration. In that scenario, the total number of oracle calls required to obtain specific accuracy $\varepsilon > 0$ is given by $K \tau$ (multiplication of $K$ iterations with $\tau$ oracle calls).

Note that according to Theorem \ref{cor:weak_MVI_convergence} to achieve an $\varepsilon$ accuracy, we need $K \geq \frac{C \left\| x_0 - x^* \right\|^2}{\epsilon}$ iterations. This follows trivially by
\begin{equation}
\label{naosdnao}
    \min\limits_{0\leq k \leq K-1}\E\left[\|F(\hat x_k)\|^2\right] \overset{\text{Theorem}~\ref{cor:weak_MVI_convergence}}{\leq} \frac{C\|x_0 - x^*\|^2}{K-1} \leq \varepsilon.
\end{equation}
 Therefore, using $K \geq \frac{C \left\| x_0 - x^* \right\|^2}{\epsilon}$ in combination with the lower bound on $\tau$ from \eqref{eq:SPEG_weak_MVI_batchsize}, the total number of oracle calls to satisfy \eqref{naosdnao} is given by:  
\begin{eqnarray*}
   K\tau \geq \max \left\lbrace \frac{C|| x_0 - x^{\ast} ||^2}{\epsilon}, \frac{32 C \delta || x_0 - x^{\ast} ||^2}{(1 - L \gamma) L^3 \omega \epsilon}, \frac{48 C^2 \omega \gamma \delta || x_0 - x^{\ast}||^4}{(1 - \gamma L)^2 \epsilon^2}, \frac{2 C^2 \omega \gamma \sigma_\ast^2 || x_0 - x^{\ast}||^2}{(1 - L \gamma) \epsilon^2}\right\rbrace. 
\end{eqnarray*}

\newpage
\section{Further Results on Arbitrary Sampling}\label{section: further results on arbitrary sampling}
\subsection{Proof of Proposition \ref{Prop_SingleElement}}
Expanding the left hand side of Expected Residual~\eqref{eq: ER Condition} condition we have
\begin{eqnarray}
    \E \|(F_v(x) - F_v(x^*)) - (F(x) - F(x^*))\|^2 
   &\overset{\eqref{eq: variance of an unbiased estimator}}{=} & \E\|(F_v(x) - F_v(x^*)) \|^2 - \|F(x) - F(x^*)\|^2 \notag \\
   &\leq& \E \|F_v(x) - F_v(x^*)\|^2 . \label{eq: bound on expected residual}
\end{eqnarray}
For any $x$ and $y$ with $v_i = \frac{1}{p_i}$ we obtain  
\begin{eqnarray*}
\|F_v(x) - F_v(y)\|^2 &=& \frac{1}{n^2} \bigg\| \sum_{i \in S} \frac{1}{p_i} (F_i(x) - F_i(y))\bigg\|^2 \\
&=& \sum_{i,j \in S} \bigg \langle \frac{1}{n p_i}(F_i(x) - F_i(y)), \frac{1}{n p_j}(F_j(x) - F_j(y))\bigg \rangle.
\end{eqnarray*}
Then taking expectation on both sides we get
\begin{eqnarray*}
    \E \|F_v(x) - F_v(y)\|^2 &=& \sum_{C} p_C \sum_{i,j \in C} \bigg\langle \frac{1}{n p_i}(F_i(x) - F_i(y)), \frac{1}{n p_j}(F_j(x) - F_j(y)) \bigg\rangle \\
    &=& \sum_{i,j = 1}^n \sum_{C: i,j \in C} p_C \bigg \langle \frac{1}{n p_i}(F_i(x) - F_i(y)), \frac{1}{n p_j}(F_j(x) - F_j(y)) \bigg \rangle \\
    &=& \sum_{i,j = 1}^n \frac{P_{ij}}{p_i p_j} \bigg \langle \frac{1}{n}(F_i(x) - F_i(y)), \frac{1}{n}(F_j(x) - F_j(y)) \bigg \rangle.
\end{eqnarray*}
Now we consider the case, where the ratio $\frac{P_{ij}}{p_i p_j} = c_2$ i.e. constant for $i \neq j$ and $P_{ii} = p_i$. Then from the above computations we derive
\begin{eqnarray*}
    \E \|F_v(x) - F_v(y)\|^2 &=& \sum_{i \neq j}^n c_2 \bigg \langle \frac{1}{n} (F_i(x) - F_i(y)), \frac{1}{n} (F_i(x) - F_i(y)) \bigg \rangle \\
    && \quad + \sum_{i = 1}^n \frac{1}{n^2 p_i} \|F_i(x) - F_i(y)\|^2 \\
    &=& \sum_{i,j = 1}^n c_2 \bigg \langle \frac{1}{n} (F_i(x) - F_i(y)), \frac{1}{n} (F_i(x) - F_i(y)) \bigg \rangle\\
    && \quad + \sum_{i = 1}^n \frac{1 - p_i c_2}{n^2 p_i} \|F_i(x) - F_i(y)\|^2\\
    &\overset{\eqref{eq: F_i lipschitz}}{\leq}&  c_2 \|F(x) - F(y)\|^2 + \sum_{i = 1}^n \frac{1 - p_i c_2}{n^2 p_i} L_i^2\|x - y\|^2 \\
    &\overset{\eqref{eq: F lipschitz}}{\leq}& \bigg(c_2 L^2 + \frac{1}{n^2} \sum_{i = 1}^n \frac{1 - p_i c_2}{p_i} L_i^2 \bigg) \|x - y\|^2.
\end{eqnarray*}
Thus replacing $y = x^*$ and combining with \eqref{eq: bound on expected residual} we get the following bound on the Expected Residual:
\begin{equation}\label{eq: single element sampling bound on ER 2}
    \E \|(F_v(x) - F_v(x^*)) - (F(x) - F(x^*))\|^2 \leq \bigg(c_2 L^2 + \frac{1}{n^2} \sum_{i = 1}^n  \frac{1 - p_i c_2}{p_i} L_i^2 \bigg) \|x - x^*\|^2.
\end{equation}
For single-element sampling $c_2 = 0$ (as probability of two points appearing in same sample is zero for single element sampling i.e. $P_{ij} = 0$). Then we obtain
$$
\delta \leq  \frac{2}{n^2} \sum_{i = 1}^n \frac{L_i^2}{p_i}
$$
from \eqref{eq: single element sampling bound on ER 2}. This completes the derivation of $\delta$ for single element sampling. To compute $\sigma_*^2$ for single element sampling, we replace
\begin{equation*}
    P_{ij} = \begin{cases}
    p_i & \text{if } i = j \\
    0 & \text{otherwise}
    \end{cases}
\end{equation*}
in \eqref{eq: expansion for sigma} to get
\begin{equation*}
    \sigma_*^2 = \frac{1}{n^2} \sum_{i=1}^n \frac{1}{p_i} \|F_i(x^*)\|^2.
\end{equation*}

\newpage
\section{Numerical Experiments}
\label{Appendix_AddExperiments}
In Appendix \ref{subsec:FurtherDetails}, we add more details on the experiments discussed in the main paper. Furthermore, in Appendix \ref{subsec:ComparisonConstantvsSwitching}, we run more experiments to evaluate the performance of \algname{SPEG} on quasi-strongly monotone and weak MVI problems.

\subsection{More Details on the Numerical Experiments of Section~\ref{Numerical Experiments}}\label{subsec:FurtherDetails}

\paragraph{On Constant vs Switching Stepsize Rule.} We run the experiments on two synthetic datasets. In Fig.~\ref{fig: Synthetic Dataset 1} of the main paper, we take $\mu_A = \mu_C = 0.6$. Here we include one more plot with a similar flavor but in a different setting. For Fig.~\ref{fig: Synthetic Dataset 2}, we generate the data such that eigenvalues of $A_1, B_1, C_1$ are generated uniformly from the interval $[0.1, 10]$. In the new plot, similar to the main paper, we can see the benefit of switching the step-size rule of Theorem \ref{SPEG switching rule}.

\begin{figure}[h]
\centering
    \includegraphics[width=.48\textwidth]{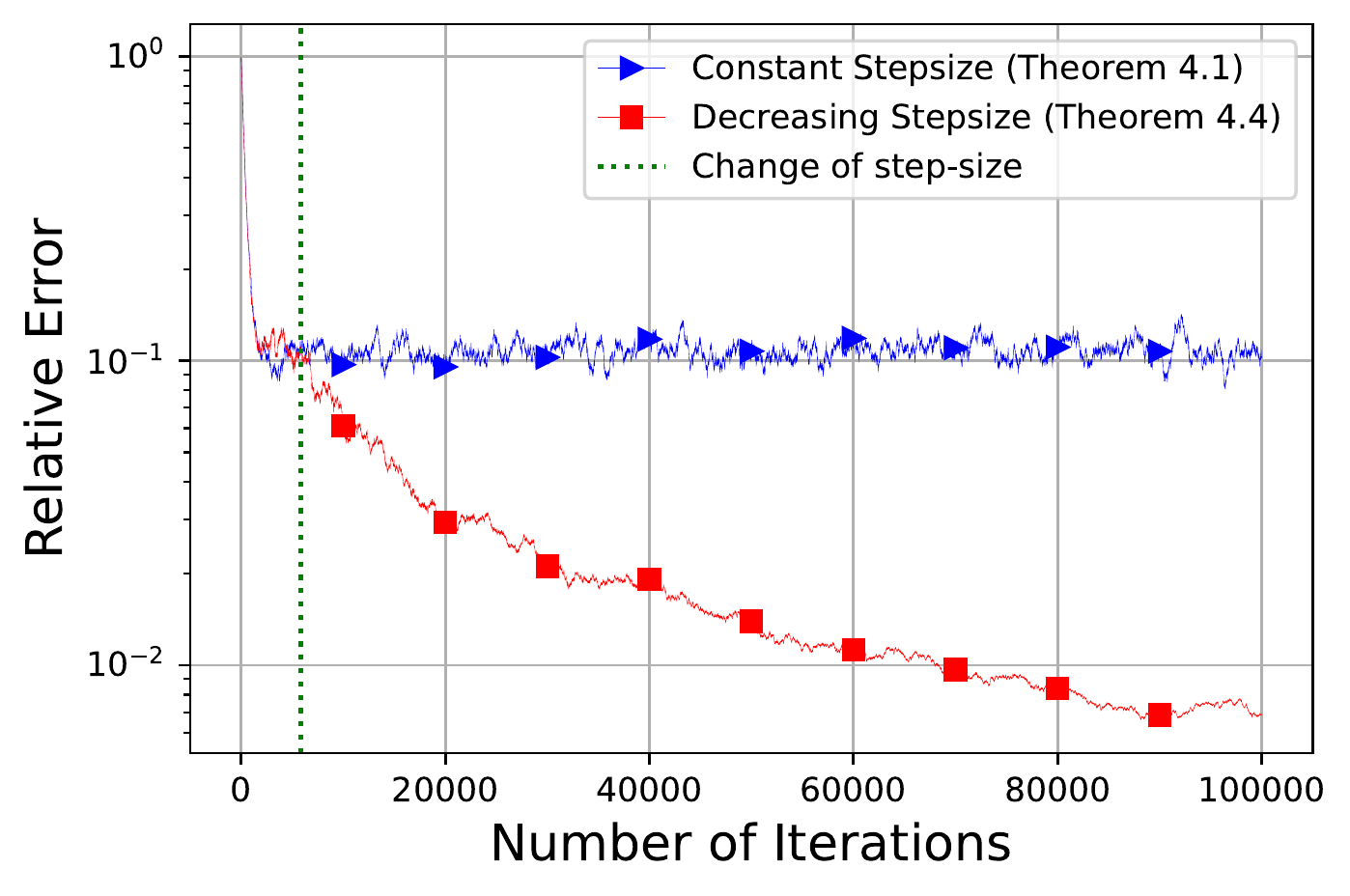}
\caption{Comparison of the constant step-size rule~\eqref{eq:constant_stepsize} with the switching step-sizes~\eqref{eq:stepsize_switching_1} on the strongly monotone quadratic game.}\label{fig: Synthetic Dataset 2}
\end{figure}

\paragraph{On Weak Minty VIPs.}
In this experiment, we generate $\xi_i, \zeta_i$ such that $\frac{1}{n}\sum_{i = 1}^n \xi_i = \sqrt{63}$ and $\frac{1}{n} \sum_{i = 1}^n \zeta_i = - 1$. This choice of $\xi_i, \zeta_i$ ensures that $L = 8$ and $\rho = \nicefrac{1}{32}$ for the min-max problem we considered in Section \ref{sec: Experiment on WMVI}. In Fig. \ref{fig:performance of SPEG on WMVI_b}, we again implement the \algname{SPEG} on \eqref{asoxasl} with batchsize = $0.15 \times n$ (different batchsize compare to the plot of the main paper). 
 \begin{figure}[h]
     \centering
    \includegraphics[width=.48\textwidth]{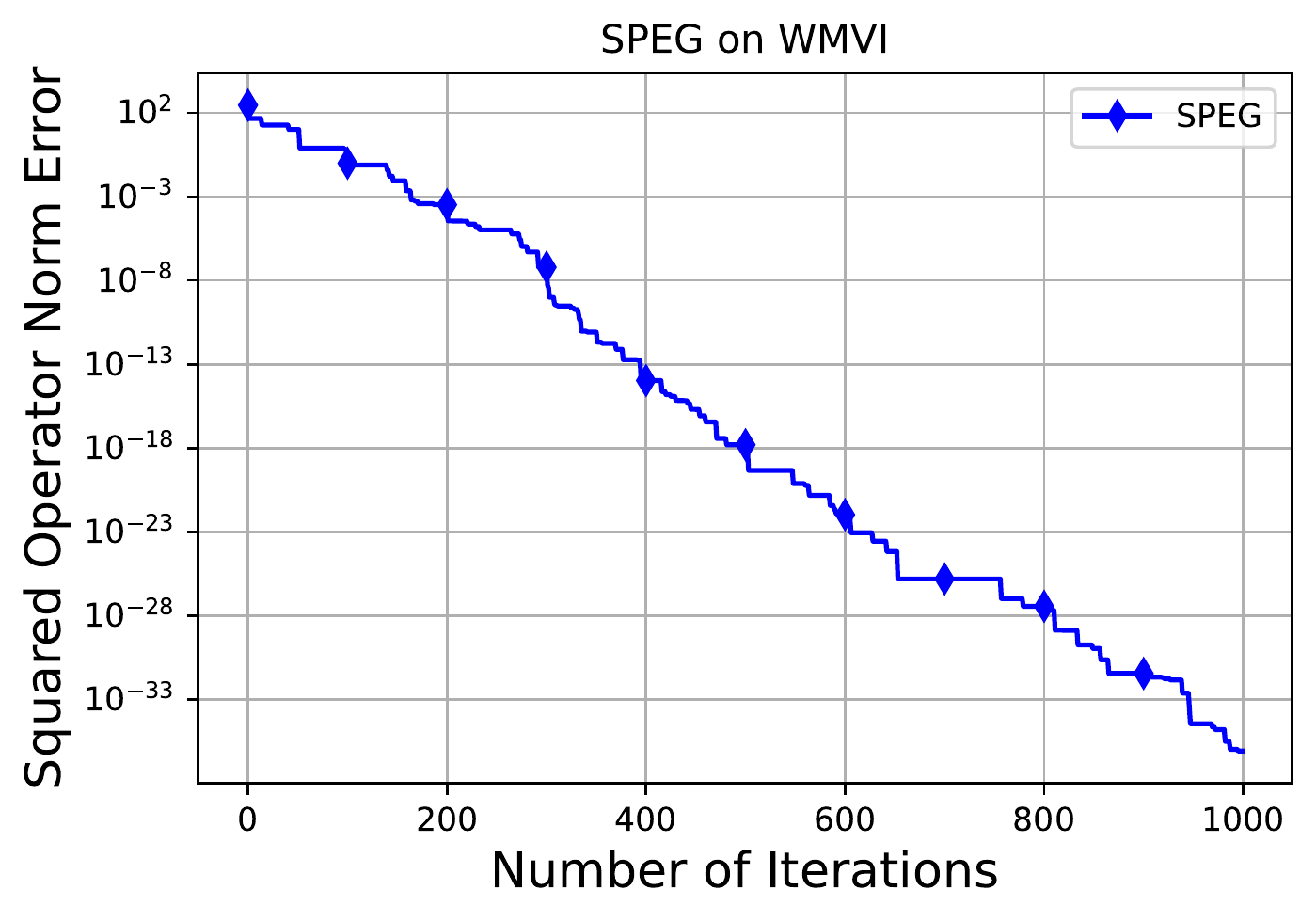}
    \caption{Trajectory of \algname{SPEG} for solving weak MVI using a batchsize = $0.15 \times n$.}
    \label{fig:performance of SPEG on WMVI_b}
\end{figure}

\subsection{Additional Experiments}\label{subsec:ComparisonConstantvsSwitching}
In this subsection, we include more experiments to evaluate the performance of \algname{SPEG} on quasi-strongly monotone and weak MVI problems. 
First, we run the experiment comparing constant and switching step-size rules on a different setup than the one we included in the main paper to analyze the performance of \algname{SPEG} under different condition numbers. Then, we implement \algname{SPEG} on the weak MVI of \eqref{asoxasl}. To evaluate the performance in this experiment, we plot $\nicefrac{\|F(\hat{x}_k)\|^2}{\|F(x_0)\|^2}$ on the $y$-axis.
\subsubsection{Strongly Monotone Quadratic Game:} In this experiment, we compare the proposed constant step-size~\eqref{eq:constant_stepsize} and the switching step-size rule~\eqref{eq:stepsize_switching_1}. We implement our algorithm on operator $F: \R^{4} \to \R^{4}$ given by 
\begin{equation*}
   F(x) \eqdef \frac{1}{3} \left(M_1(x - x_1^*) + M_2 (x - x_2^*) + M_3 (x - x_3^*)\right), 
\end{equation*}
where $M_1$, $M_2$ and $M_3$ are the diagonal matrices, 
\begin{eqnarray*}
    M_1 = \begin{pmatrix}
        \Delta & & &\\
        & 1 & & \\
        & & 1 & \\
        & & & 1
    \end{pmatrix}, \quad M_2 = \begin{pmatrix}
        1 & & &\\
        & \Delta & & \\
        & & 1 & \\
        & & & 1
    \end{pmatrix}, \quad M_3 = \begin{pmatrix}
        1 & & &\\
        & 1 & & \\
        & & \Delta & \\
        & & & 1
    \end{pmatrix}
\end{eqnarray*}
and
\begin{eqnarray*}
    x_1^* = \begin{pmatrix}
    \Delta \\
    0 \\
    0 \\
    \Delta
    \end{pmatrix}, \quad x_2^* = \begin{pmatrix}
    0 \\
    \Delta \\
    0 \\
    0
    \end{pmatrix}, \quad x_3^* = \begin{pmatrix}
    0 \\
    0 \\
    \Delta \\
    0
    \end{pmatrix}. 
\end{eqnarray*}
This choice of $M_i$ and $x_i^*$ ensures that the Lipschitz constant of operator $F$ is $\frac{\Delta + 2}{3}$ while quasi-strong monotonicity parameter~\eqref{eq: Strong Monotonicity} is $\mu = 1$. Hence the condition number of $F$ is given by $\frac{\Delta + 2}{3}$. This allows us to vary the condition number of operator $F$ by changing the value of $\Delta$. For Fig. \ref{fig:condition_no_1} we take $\Delta = 3$ (condition number = $1.67$) while for Fig. \ref{fig:condition_no_10} we choose $\Delta = 10$ (condition number = $10.67$). The vertical dotted line in plots of Fig.~\ref{fig:constant_vs_switch_exp_2} marks the transition point from constant to switching step-size rule as predicted by our theoretical result in Theorem~\ref{SPEG switching rule}.

\begin{figure}[h]
\centering
\begin{subfigure}[b]{.48\textwidth}
    \centering
    \includegraphics[width=\textwidth]{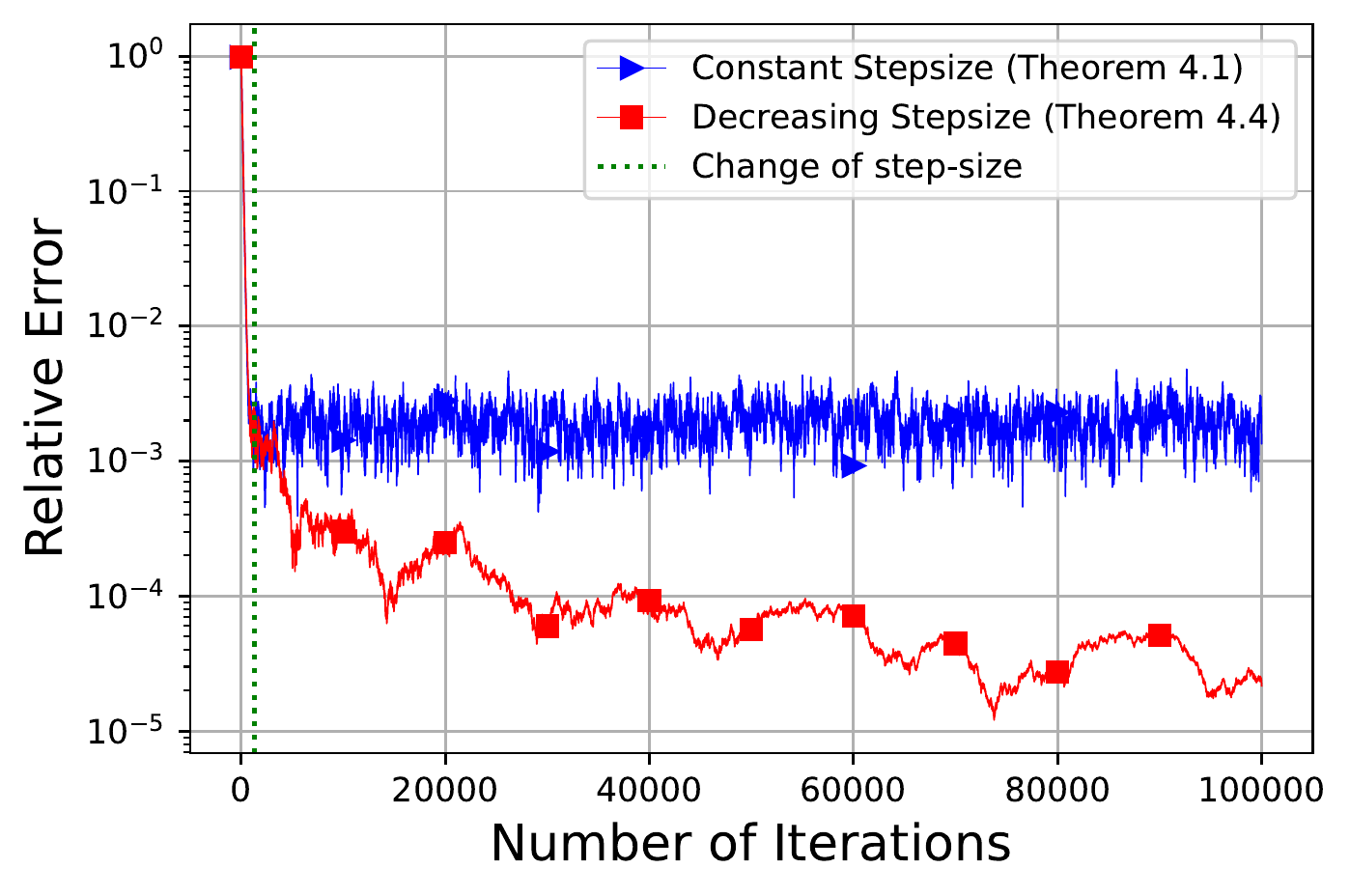}
    \caption{Condition Number $\frac{L}{\mu} = 1.67$.}\label{fig:condition_no_1}
\end{subfigure}
\begin{subfigure}[b]{0.48\textwidth}
    \centering
    \includegraphics[width=\textwidth]{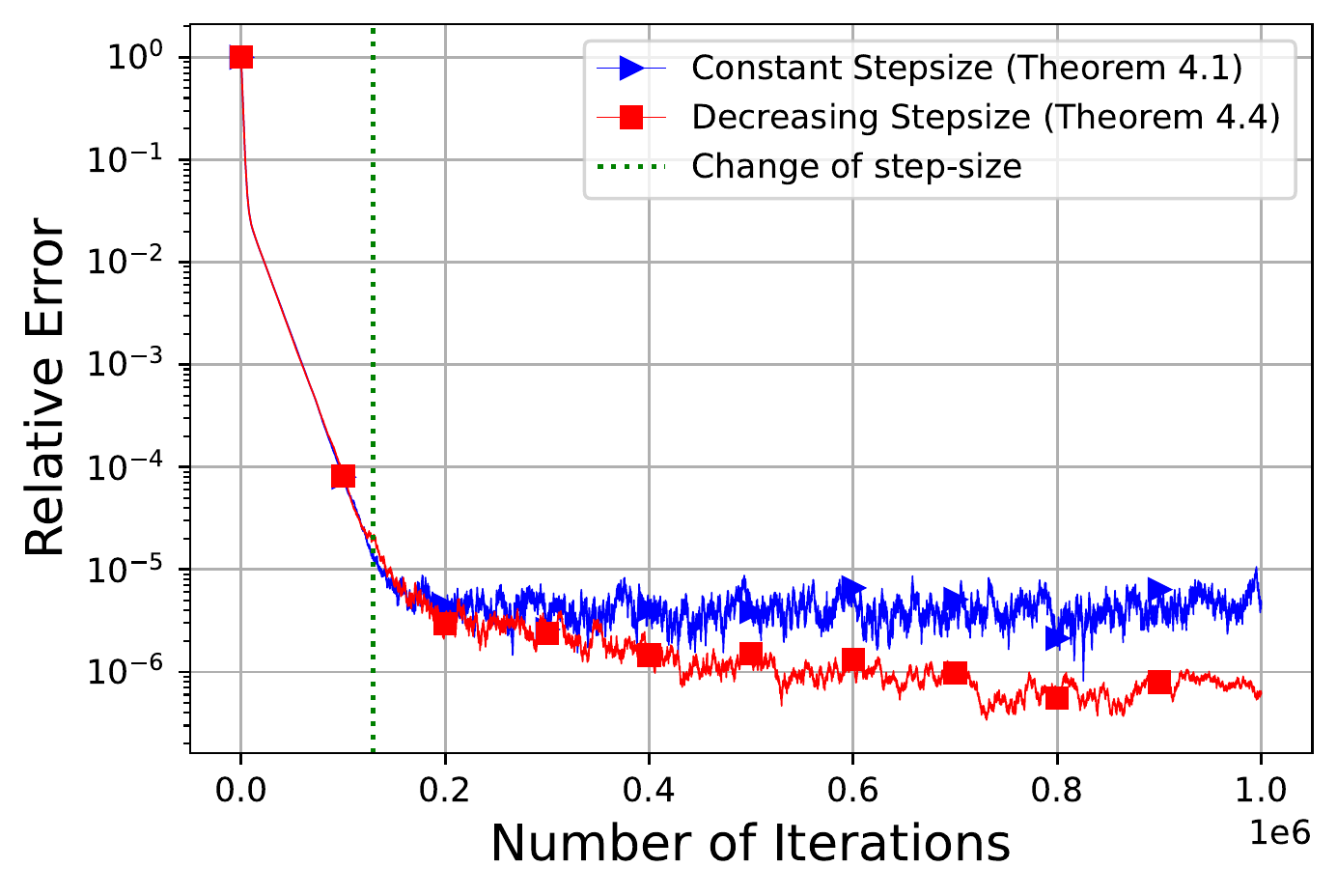}
    \caption{Condition Number $\frac{L}{\mu} = 10.67$.}\label{fig:condition_no_10}
\end{subfigure}
    \caption{\emph{Illustration of switching rule~\eqref{eq:stepsize_switching_1} in Theorem \ref{SPEG switching rule}. The dotted line marks the transition from phase 1 (where we use constant step-size) to phase 2 (where we use decreasing step-size).}}
\label{fig:constant_vs_switch_exp_2}
\end{figure}

\subsubsection{Weak Minty VIPs Continued}
\label{subsec:WeakMVI}
In this experiment, we reevaluate the performance of \algname{SPEG} on weak MVI example of \eqref{asoxasl}. That is, we generate the data in exactly the same way as the ones in section \ref{sec: Experiment on WMVI} with $n = 100$. In Fig. \ref{fig:batchsize=0.1} and \ref{fig:batchsize=0.15}, we implement \algname{SPEG} with batchsize $10$ and $15$, respectively (we note that in this setting the full-gradient evaluation requires a batchsize of $100$). For these plots, we use the relative operator norm on the $y$-axis, i.e. $\nicefrac{\|F(\hat{x}_k)\|^2}{\|F(x_0)\|^2}$, where $x_0$ denotes the starting point of \algname{SPEG}. As expected, the plots illustrate that \algname{SPEG} performs better as we increase the batchsize. From Fig.~\ref{fig:SPEG_weak_MVI_operator_norm} it is clear that with batchsize 15 \algname{SPEG} reaches an accuracy close to $10^{-10}$ while when we use a batchsize of $10$ for the same number of iterations we are only able to converge to an accuracy of $10^{-4}$. 
\begin{figure}[H]
\centering
\begin{subfigure}[b]{.48\textwidth}
    \centering
    \includegraphics[width=\textwidth]{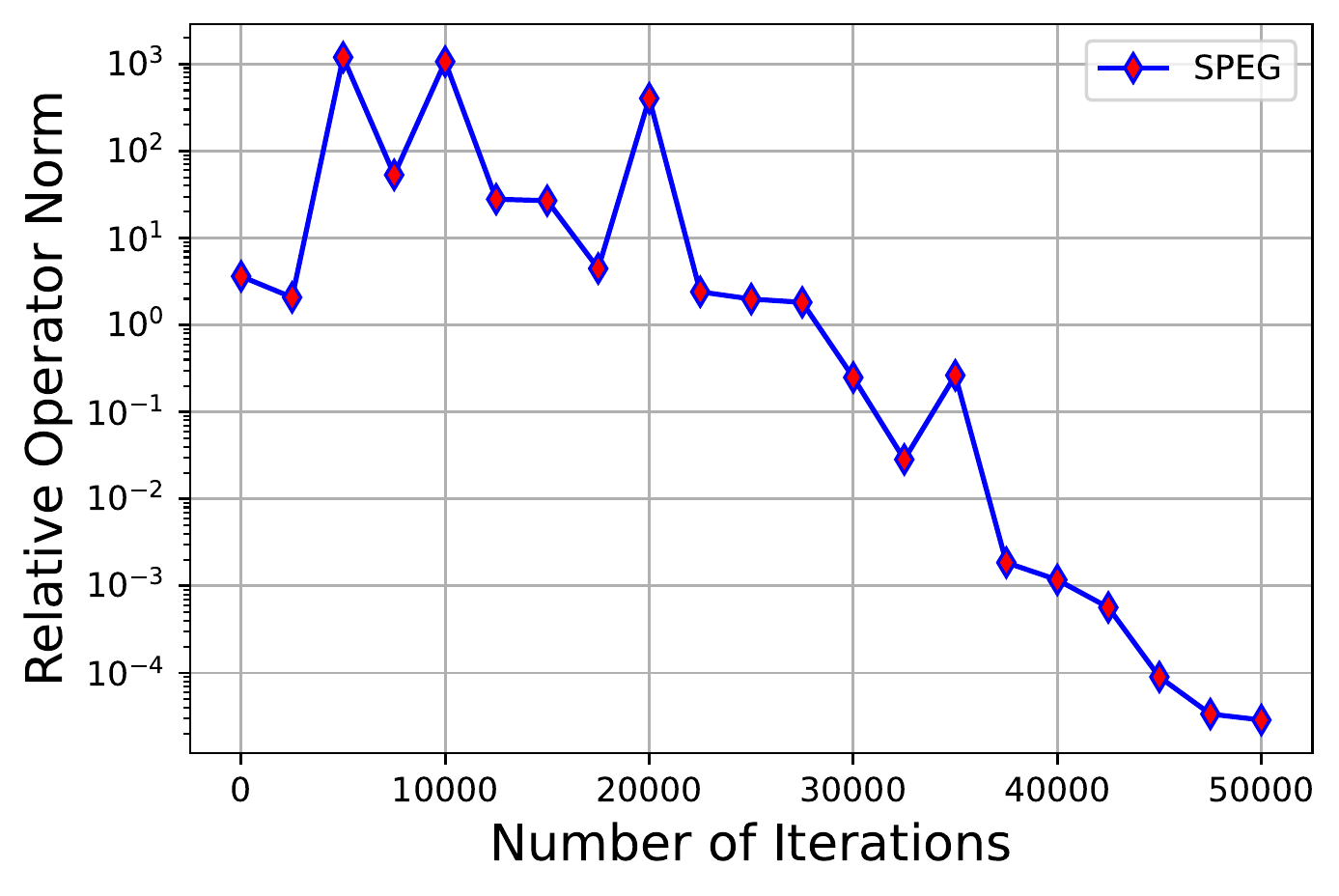}
    \caption{Batchsize = $0.1 \times n.$}\label{fig:batchsize=0.1}
\end{subfigure}
\begin{subfigure}[b]{0.48\textwidth}
    \centering
    \includegraphics[width=\textwidth]{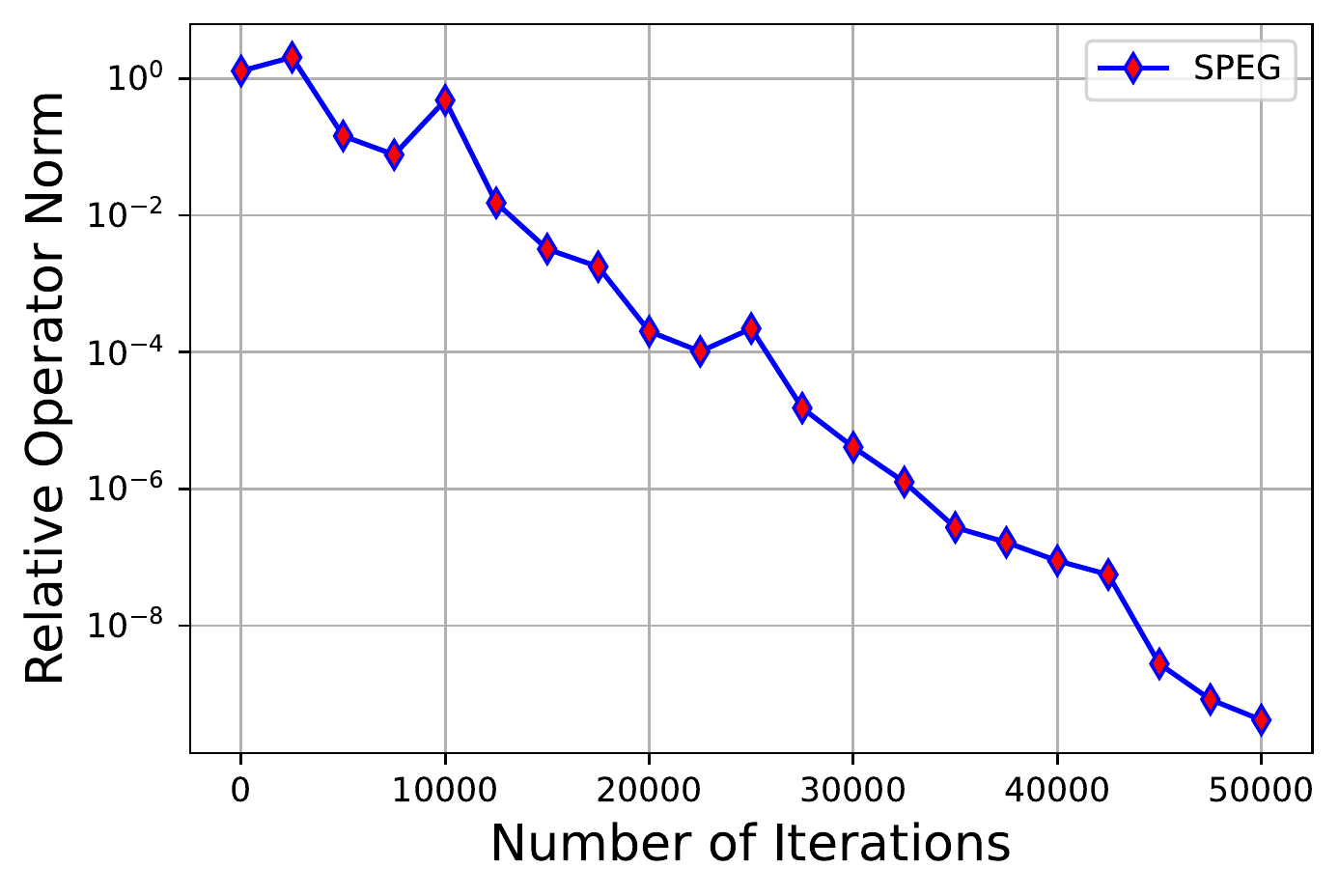}
    \caption{Batchsize = $0.15 \times n.$}\label{fig:batchsize=0.15}
\end{subfigure}
    \caption{\emph{Performance of \algname{SPEG} for solving weak MVI with different batchsizes. In plot (a) we use a batchsize of $10$ while in plot (b) we use $15$.}}
\label{fig:SPEG_weak_MVI_operator_norm}
\end{figure}
\end{document}